\def\Dscr{\mathcal{D}}
\def\Gscr{\mathcal{G}}
\def\Jscr{\mathcal{J}}
\def\Lscr{\mathcal{L}}
\def\Mscr{\mathcal{M}}
\def\Oscr{\mathcal{O}}
\def\Pscr{\mathcal{P}}
\def\Uscr{\mathcal{U}}
\def\Vscr{\mathcal{V}}
\newcommand{\tond}[1]{{\left(#1\right)}}
\newcommand{\quadr}[1]{{\left[#1\right]}}
\newcommand{\inter}[1]{{\langle#1\rangle}}
\newcommand{\qed}{{\vskip-18pt\null\hfill$\square$\vskip6pt}}
\newcommand{\norm}[1]{\left\|#1\right\|}
\DeclareMathOperator{\Ker}{Ker}
\DeclareMathOperator{\Spec}{Spec}
\DeclareMathOperator{\Range}{Range}
\DeclareMathOperator{\Span}{Span}
\def\CC{\mathbb{C}}
\def\RR{\mathbb{R}}
\def\ZZ{\mathbb{Z}}
\def\TT{\mathbb{T}}
\def\Id{\mathbb I}
\def\naturali{\mathbb{N}}
\def\interi{\mathbb{Z}}
\def\toro{\mathbb{T}}
\def\reali{{\mathbb{R}}}
\def\complessi{{\mathbb{C}}}
\def\epsilon{\varepsilon}
\def\eps{\varepsilon}
\def\vphi{\varphi}
\def\im{{\bf i}}
\def\imunit{{\bf i}}
\def\lie#1{L_{#1}}
\def\rmI{{\rm I}}
\def\Pset{{\cal P}}
\def\realpart{\mathop{\rm Re}\nolimits}
\def\imaginary{\mathop{\rm Im}\nolimits}
\def\prsca#1#2{\langle #1 , #2 \rangle}
\def\parder#1#2{{\partial#1 \over \partial#2}}
\def\frmref{\eqref}
\def\lemref{\ref}
\newtheorem{theorem}{Theorem}[section]
\newtheorem{theorem*}{Theorem}
\newtheorem{lemma}{Lemma}[section]
\newtheorem{proposition}{Proposition}[section]
\newtheorem{remark}{Remark}[section]
\newenvironment{proof}[0]{\noindent{\bf Proof.\space}}{\par\ \hfill$\square$\par\medskip}
\title{On the continuation of degenerate periodic orbits\\ via normal form:
       full dimensional resonant tori}
\author[1]{T. Penati\footnote{{\tt Corresponding
      author, email to tiziano.penati@unimi.it}}}
\author[1]{M. Sansottera}
\author[1]{V. Danesi}
\affil[1]{\small Department of Mathematics, University of Milan, via Saldini 50, 20133 --- Milan, Italy.}
\begin{document}

\maketitle

\begin{abstract}
We reconsider the classical problem of the continuation of degenerate
periodic orbits in Hamiltonian systems.  In particular we focus on
periodic orbits that arise from the breaking of a completely resonant
maximal torus. We here propose a suitable normal form construction
that allows to identify and approximate the periodic orbits which
survive to the breaking of the resonant torus. Our algorithm allows to
treat the continuation of approximate orbits which are at leading
order degenerate, hence not covered by classical averaging methods. We
discuss possible future extensions and applications to localized
periodic orbits in chains of weakly coupled oscillators.
\end{abstract}

\noindent
{\bf Keywords:} normal form construction, completely resonant tori,
Hamiltonian perturbation theory, periodic orbits.

\section{Introduction}
\label{sec:introduction}

We consider a canonical system of differential equations with
Hamiltonian
\begin{equation}
H(I,\vphi,\eps) = H_0(I) +\epsilon H_1(I,\vphi)+\eps^2 H_2(I,\vphi)+\ldots\ ,
\label{frm:H-modello}
\end{equation}
where $I\in\Uscr\subset\reali^n$, $\vphi\in\toro^n$ are action-angle
variables and $\epsilon$ is a small perturbative parameter.  The
unperturbed system, $H_0$, is clearly integrable and the orbits, lying
on invariant tori, are generically quasi-periodic.  Besides, if the
unperturbed frequencies satisfy resonance relations, one has periodic
orbits on a dense set of resonant tori.

The KAM theorem ensures the persistence of a set of large measure of
quasi-periodic orbits, lying on non-resonant tori, for the perturbed
system, if $\epsilon$ small enough and a suitable non-degeneracy
condition for $H_0$ is satisfied.

Instead, considering a resonant torus, when a perturbation is added
such a torus is generically destroyed and only a finite number of
periodic orbits are expected to survive.  The location and stability
of the continued periodic orbits are determined by a theorem of
Poincar\'e \cite{PoiI,PoiOU7}, who approached the problem locally:
with an averaging method, he was able to select those isolated
unperturbed solutions which, under a suitable non degeneracy
condition, can be continued by means of an implicit function
theorem. A modern approach has been developed in the seventies by
Weinstein \cite{Wei73b} and Moser \cite{Mos76} using bifurcation
techniques, turning the problem to the investigation of critical
points of a functional on a compact manifold, whose number can be
lower estimated with geometrical methods, like Morse theory.  The
drawback lies in the fact that the method is not at all constructive,
thus does not permit the localization of the periodic orbits on the
torus.  In the same spirit, variational methods which make use of the
mountain pass theorem were developed some years later, by Fadell and
Rabinowitz, under different hypothesis (see Chapter 1 in \cite{Ber07}
for a simplified but extremely clear introduction to this result). In
the last years, some new results appeared
{\cite{VoyI99,MelS05,CorFG13,CorG15}}, which face the problem of the
continuation of {degenerate} periodic orbits and lower dimensional
tori in weakly perturbed Hamiltonian systems.

In this paper we follow the line traced by Poincar\'e and deal with
those cases when the non-degeneracy condition is not fulfilled.  In
particular, under a twist-like condition of the form \eqref{frm:twist}
(see, e.g., \cite{BenGGS84}) and analytic estimates of the
perturbation \eqref{frm:decadimento-iniziale}, we develop an original
normal form scheme, inspired by a recent completely constructive proof
of the classical Lyapunov theorem on periodic orbits
\cite{Giorgilli-2012}, which allows to investigate the continuation of
degenerate periodic orbits.  Precisely, first we identify possible
candidates for the continuation via normal form, then we prove the
existence of a unique solution by using the Newton-Kantorovich method.

\begin{remark}
Let us anticipate a crucial difference with respect to the KAM
normal form algorithm: generically, our normal form procedure turns
out to be divergent.  Actually, a moment's thought suggests that
looking for a convergent normal form which is valid for all the
possible periodic orbits is too much to ask.  The idea is that a
suitably truncated normal form allows to produce the approximated
periodic orbits and the continuation can be performed via contraction
or with a further convergent normal form around a selected periodic
orbit.
\end{remark}

The strength of the present perturbative algorithm is at least
twofold. First, the possibility to construct approximate periodic
solutions at any desired order in $\epsilon$, thus going beyond the
average approximation used in most part of the literature. One of the
few results which represents an improvement with respect to the usual
average method is the one claimed in [MelS05], where a criterion for
the existence of periodic orbits on completely degenerate resonant
tori is proved. In this work the authors, by means of a standard
Liendstedt expansion as the original works of Poincar\'e, are able to
push the perturbation scheme at second order in the small parameter
$\epsilon$. However, the possibility to provide a criterion for the
continuation, although remarkable, is a consequence of the restriction
to completely degenerate cases, like when the Fourier expansion of
$H_1$ with respect to the angle variables does not include a certain
resonance class. In this way, all the partial degeneracies are
excluded.  This limitations is overcome by the normal form
that we propose: indeed, by being able to deal with any degree of
degeneracy, it results more general (also in terms of order of
accuracy), thus including also the above mentioned result.

The formal scheme itself has also a second relevant aspect. Since this
approximation is given by a recursive explicit algorithm, it can be
much useful for numerical applications (see, e.g., \cite{GioSan-2012})
and it is independent on the possibility to conclude the proof with a
contraction theorem.  Second, our approach provides a constructive
normal form that can be extended to a sufficiently general class of
models, including non-linear lattices with next-to-nearest neighbor
interactions, like
  \begin{displaymath}
    H = \sum_{j\in\Jscr}\frac{y_j^2}{2} +\sum_{j\in\Jscr}V(x_j)
    +\epsilon \sum_{l=1}^r\sum_{j\in\Jscr}W(x_{j+l}-x_j)\ ,
  \end{displaymath}
where $V(x)$ is an anharmonic oscillator which allows for action
variable (at least locally, like the Morse potential), and $W(x)$
represents a generic next-to-nearest neighbour (typically linear)
interaction, with $r$ the maximal range of the interaction. This
further generalization would represent a remarkable breakthrough in
the direction of the investigation of degenerate phase-shift
multibreathers and vortexes in one and two dimensional lattices (see,
e.g.,
\cite{Aub97,Ahn98,PelKF05b,KouK09,PelS12,Kou13,PenSPKK16,PenKSKP17}).

In the present work we focus on resonant maximal tori in order to
reduce the technical difficulty to a minimum.  The extension to lower
dimensional tori, that represent the natural extension of the present
work, will be also useful in problems emerging in Celestial Mechanics,
where the persistence of non-resonant lower dimensional tori has been
proved with similar techniques, see,
e.g.,~\cite{SanLocGio-2010,GioLocSan-2014}.

\subsection{Outline of the algorithm and statement of the main results}
\label{sbs:outline}

Consider a completely resonant maximal torus of $H_0$ with unperturbed
frequencies
$$
\hat\omega(I)=\parder{H_0}{I}\ ,\quad
\hbox{such that}\quad
\hat\omega(I) = \omega k\ ,
$$ where $\omega\in\reali$ and $k\in\interi^{n}$. This corresponds to
a suitable choice of the actions $I=I^*$ with non-vanishing
components. From now on, without affecting the generality of the
result, we will assume $k_1=1$: this will simplify the interpretation
of the new variables $\hat q, \hat p$ that we are going to introduce
in a while.

Expanding~\eqref{frm:H-modello} in power series of the
translated actions $J=I-I^*$, one has
\begin{equation*}
  \begin{aligned}
H^{(0)} &= \langle {\hat\omega}, J\rangle
+f_{4}^{(0,0)}(J)+\sum_{l> 2} f_{2l}^{(0,0)}(J)\\
&
+f_{0}^{(0,1)}(\vphi)
+f_{2}^{(0,1)}(J,\vphi )\cr
&
+\sum_{s>1} f_{0}^{(0,s)}(\vphi)
+\sum_{s>1} f_{2}^{(0,s)}(J,\vphi )+\cr
&
+\sum_{s>0}\sum_{l>1}f_{2l}^{(0,s)}(J,\vphi )\ ,
  \end{aligned}
\end{equation*}
where $f_{2l}^{(0,s)}$ is an homogeneous polynomial of degree $l$ in
$J$ and it is a function of order $\Oscr(\epsilon^s)$.

\begin{remark}
  The decision to tie the index $2l$ to terms of degree $l$ in $J$ is
  due to the future extension of the work to lower dimensional
  tori. Indeed, in that case the transversal directions will be
  described in cartesian variables, thus the actions will count for
  two in the total degree.  This is also in agreement with the
  notation adopted in~\cite{GioLocSan-2014}.
\end{remark}

\begin{remark}
  The Hamiltonian~\eqref{frm:H-modello} in most applications has only
  linear terms in the small parameter $\epsilon$, namely $H_{l\geq
    2}\equiv 0$.  Nevertheless, we already consider the general case
  where the perturbation is analytic in the small parameter.  Indeed,
  as it will be clear from the normal form procedure, starting from
  the first normalization step we immediately introduce the whole
  series expansion in $\epsilon$.
\end{remark}

We define the $(n-1)$-dimensional resonant module
$$
\Mscr_{\omega} = \Bigl\{ h\in\interi^{n}: \langle
{{\hat\omega}},h\rangle =0\Bigr\}\ 
$$ and introduce the resonant variables $\hat p$,$\hat q$ in place of
$J,\vphi$.  In particular, the pair of conjugate variables
$\hat{p}_1,\hat{q}_1$ describe the periodic orbit, while the pairs
$\hat{p}_j,\hat{q}_j$, $j=2,\ldots,n$, represent the transverse
directions. The canonical change of coordinates is built with an
unimodular matrix (see Lemma~2.10 in~\cite{Gio03}) which shows
that\footnote{{This follows from the assumption $k_1=1$. Indeed, in
    this case that the resonant vector defining the phase differences
    $\hat q_j = k_j \phi_1 - \phi_j$ are a basis for the resonant
    modulus $\Mscr_{\omega}$.}} the new angles $\hat{q}_{j}$,
$j=2,\ldots,n$, are the phase differences with respect to the
\emph{true} angle of the periodic orbit, $\hat{q}_1$, {and that $\hat
  p_1$ is given by $\hat p_1 = \inter{k, J}$}.

Introducing the convenient notations $\hat{p}=(p_1,p)$,
$\hat{q}=(q_1,q)$ with $p_1=\hat{p}_1$,
$p=(\hat{p}_2,\ldots,\hat{p}_n)$ and correspondingly for $q_1$ and
$q$, the Hamiltonian can be written in the form
\begin{equation}
  \begin{aligned}
    H^{(0)} &= \omega p_1 +f_{4}^{(0,0)}(p_1,p)+\sum_{l> 2}
    f_{2l}^{(0,0)}(p_1,p )\cr &\quad +f_{0}^{(0,1)}(q_1,q)
    +f_{2}^{(0,1)}(p_1,p,q_1,q )\cr &\quad +\sum_{s>1}
    f_{0}^{(0,s)}(q_1,q) +\sum_{s>1} f_{2}^{(0,s)}(p_1,p,q_1,q )\cr
    &\quad
    +\sum_{s>0}\sum_{{l>1}}f_{2l}^{(0,s)}(p_1,p,q_1,q)
    \label{frm:H(0)}
  \end{aligned}
\end{equation}
where $f_{2l}^{(0,s)}$ is an homogeneous polynomial of degree $l$ in
$\hat{p}$ and it is a function of order $\Oscr(\epsilon^s)$.

We consider the extended complex domains $\Dscr_{\rho,\sigma} =
\Gscr_{\rho} \times \TT^{n}_{\sigma}$, precisely
$$
\begin{aligned}
\Gscr_{\rho}&=\big\{\hat{p}\in\CC^{n}:\max_{1\le j\le n}|\hat{p}_j|<\rho\big\}\ ,\\
\TT^{n}_{\sigma}&=\big\{\hat{q}\in\CC^{n}:\realpart
  \hat{q}_j\in\TT,\break\ \max_{1\le j\le n}|\imaginary
  \hat{q}_j|<\sigma\big\}\ ,
\end{aligned}
$$
and introduce the distinguished classes of functions $\Pset_{2l}\,$,
with integers $l$, which can be written as a Fourier-Taylor expansion
\begin{equation}
g(\hat{p},\hat{q}) =
\sum_{{i\in\naturali^{n} \atop |i|= l}}\,\sum_{k\in\ZZ^n}
g_{i,k}
\,\hat{p}^{i} e^{\imunit \langle k,\, \hat{q}\rangle} \ ,
\label{frm:funz}
\end{equation}
with coefficients $g_{i,k}\in\complessi$.
We also set $\Pset_{-2}=\{0\}$.

Let us consider a generic analytic function $g\in\Pscr_{2l}$,
$g:\Dscr_{\rho,\sigma}\to\complessi$, we define the weighted Fourier
norm
\begin{equation*}
\|g\|_{\rho,\sigma}=
\sum_{{i\in\naturali^{n} \atop |i| = l}}
\sum_{{\scriptstyle{k\in\ZZ^{n}}}}
|g_{i,k}| \rho^l e^{|k|\sigma}\ .
\end{equation*}
Hereafter, we use the shorthand notation $\|\cdot\|_{\alpha}\,$ for
$\|\cdot\|_{\alpha(\rho,\sigma)}\,$.

We state here our main result concerning the normal form part

\begin{proposition}\label{pro:forma-normale-1}
Consider a Hamiltonian $H^{(0)}$ expanded as in~\frmref{frm:H(0)}
that is analytic in a domain $\Dscr_{\rho,\sigma}$. Let us assume that
\begin{enumerate}[label=(\alph*)]
\item
  there exists a positive constant $m$ such that for every
  $v\in\reali^{n}$ one has
  \begin{equation}
    m \sum_{i=1}^n |v_i| \leq \sum_{i=1}^n |\sum_{j=1}^n C_{ij}
    v_j|\ ,\quad\hbox{where}\quad C_{ij}=\frac{\partial^2
      f_4^{(0,0)}}{\partial \hat{p}_i \partial \hat{p}_j}\ ;
    \label{frm:twist}
  \end{equation}
  
\item the terms appearing in the expansion of the Hamiltonian satisfy
  \begin{equation}
  \|f_l^{(0,s)}\|_1 \leq \frac{E}{2^l} \epsilon^s\ ,\quad\hbox{with}\quad E>0.
  \label{frm:decadimento-iniziale}
  \end{equation}
\end{enumerate}
Then, for every positive integer $r$ there is a positive
$\epsilon_{r}^*$ such that for $0\leq \epsilon < \epsilon_{r}^*$ there
exists an analytic canonical transformation $\Phi^{(r)}$ satisfying
\begin{equation}
\Dscr_{\frac{1}{4}(\rho,\sigma)} \subset
\Phi^{(r)}\Bigl(\Dscr_{\frac{1}{2}(\rho,\sigma)}\Bigr) \subset
\Dscr_{\frac{3}{4}(\rho,\sigma)}
\label{frm:domini}
\end{equation}
such that the Hamiltonian $H^{(r)}= H^{(0)} \circ \Phi^{(r)}$ is in
normal form up to order $r$, namely
\begin{equation}
  \begin{aligned}
    H^{(r)}(p_1,p,q_1,q;q^*) &= \omega p_1
    +f_{4}^{(r,0)}(p_1,p)+\sum_{l> 2} f_{2l}^{(r,0)}(p_1,p)\cr &\quad
    +\sum_{s=1}^{r} f_{0}^{(r,s)}(q;q^*)+\sum_{s=1}^{r}
    f_{2}^{(r,s)}(p_1,p,q;q^*)\cr &\quad +\sum_{s>r}
    f_{0}^{(r,s)}(q_1,q;q^*) +\sum_{s>r}
    f_{2}^{(r,s)}(p_1,p,q_1,q;q^*)\cr &\quad
    +\sum_{s>0}\sum_{{l>1}}f_{2l}^{(r,s)}(p_1,p,q_1,q;q^*)\ ,
    \label{frm:H(r)}
  \end{aligned}
\end{equation}
where $q^*$ is a fixed but arbitrary parameter and
$f_{2l}^{(r,s)}\in\Pscr_{2l}$ is a function of order $\Oscr(\epsilon^s)$.
Moreover, for $q=q^*$ one has
\begin{equation}
  \sum_{s=1}^{r} f_{2}^{(r,s)}(p_1,p,q^*;q^*) = 0\ .
  \label{frm:nf-qstar}
\end{equation}
\end{proposition}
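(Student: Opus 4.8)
The plan is to construct $\Phi^{(r)}$ as a composition of near-identity Lie transforms, one for each perturbative order $s=1,\dots,r$, in the constructive spirit of \cite{Giorgilli-2012}. At the $s$-th step I would look for a generating function $\chi^{(s)}=\chi_0^{(s)}+\chi_2^{(s)}$, with $\chi_0^{(s)}\in\Pscr_0$ and $\chi_2^{(s)}\in\Pscr_2$, and update the Hamiltonian through $\exp(\lie{\chi^{(s)}})$. The only part of the unperturbed Hamiltonian that enters the homological equation is $\omega p_1$, and since $\bigpoisson{\omega p_1}{g}=\omega\,\parder{g}{q_1}$, removing the $q_1$-dependence of a term $g=\sum_{i,k}g_{i,k}\hat p^{i}e^{\imunit\inter{k,\hat q}}$ only requires dividing the Fourier coefficients with $k_1\neq0$ by the factor $\imunit\omega k_1$. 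The crucial observation is that these divisors satisfy $|\omega k_1|\ge|\omega|>0$, so that no small divisors appear: this is what makes the formal step elementary, dispenses with any Diophantine condition, and (as the preceding remark anticipates) is consistent with a scheme that converges only up to a finite truncation order.

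At each order I only need to put the classes $\Pscr_0$ and $\Pscr_2$ in normal form, leaving the higher degree terms to be merely propagated. First I would determine, via the homological equation, the $k_1\neq0$ part of $\chi_0^{(s)}$ so as to annihilate the $q_1$-dependence of $f_0^{(s)}$; this automatically produces, through the bracket $\bigpoisson{f_4^{(0,0)}}{\chi_0^{(s)}}\in\Pscr_2$, a contribution to the degree-one term, which couples the normalization of $f_0$ and of $f_2$. I would then fix the remaining free (resonant, $q_1$-independent) part of $\chi_0^{(s)}$ so that the $q_1$-averaged degree-one term vanishes at $q=q^*$, and finally use $\chi_2^{(s)}$ to remove the residual $q_1$-dependence of the corrected $f_2^{(s)}$. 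Here the twist hypothesis \eqref{frm:twist} plays its role: by forcing the Hessian $C$ to be invertible, it makes the linear system that the free part of $\chi_0^{(s)}$ has to solve in order to enforce \eqref{frm:nf-qstar} solvable, with a solution controlled by $1/m$. Note also that $f_4^{(0,0)}$ is left untouched by all the transformations of order $\eps^{s}$, $s\ge1$, so that the matrix $C$ stays fixed throughout the construction.

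The substantial part of the proof is quantitative. Working with the weighted Fourier norm $\norm{\cdot}_{\rho,\sigma}$, I would establish Cauchy-type estimates for the Poisson bracket and for the Lie series on domains $\Dscr_{\alpha(\rho,\sigma)}$ with geometrically shrinking $\alpha$, charging each $\partial_p$ and each $\partial_q$ a factor that is a negative power of the domain restriction. Starting from the decay hypothesis \eqref{frm:decadimento-iniziale} and using $|\omega k_1|\ge|\omega|$ to bound $\chi^{(s)}$ by a constant times the norm of the terms it removes, I would propagate the estimates order by order, so that the terms generated at order $\eps^s$ carry a factor $\eps^s$ times a constant growing at a controlled (though eventually factorial) rate. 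Summing the $r$ contributions and choosing $\epsilon_r^*$ small enough then yields both the normal-form structure \eqref{frm:H(r)} and a displacement bound of the type $\norm{\Phi^{(r)}-\mathrm{id}}\le\tfrac14(\rho,\sigma)$, which, applied to $\Phi^{(r)}$ and to its inverse, gives the two inclusions in \eqref{frm:domini}.

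I expect the main obstacle to be precisely this quantitative bookkeeping: controlling the accumulation of the domain restrictions and of the norm growth over the $r$ steps, and verifying both inclusions in \eqref{frm:domini}, the left one requiring an estimate on the inverse map and not only on the direct one. By comparison the algebraic ingredients are benign, the homological equation being free of small divisors. A secondary point deserving care is that \eqref{frm:nf-qstar} is imposed on the cumulative sum over $s=1,\dots,r$ rather than term by term, so I would check that the free parameters available at the various orders indeed suffice to meet it, again relying on the invertibility of $C$ granted by \eqref{frm:twist}.
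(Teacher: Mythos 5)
Your overall strategy coincides with the paper's: an order-by-order Lie-series normalization in which the homological equation only involves $\omega p_1$, so the divisors are $\imunit\omega k_1$ with $|\omega k_1|\ge\omega$ and no Diophantine condition is needed; the twist condition is used to solve a linear system with matrix $C$ enforcing \eqref{frm:nf-qstar}; and the analytic part is a cascade of Cauchy estimates on shrinking domains (the paper organizes this in Lemma~\ref{lem:lemmone} via the sequences $\delta_r$, $\Xi_r$, $\nu_{r,s}$, and then obtains \eqref{frm:domini} from deformation bounds on each $\phi^{(r)}$ and its inverse, exactly as you anticipate). Your worry about \eqref{frm:nf-qstar} being only a cumulative condition is unfounded: in the paper's scheme each $f_{2}^{(r,s)}$ with $s\le r$ vanishes at $q=q^*$ individually, because the subtraction of $\inter{f_2^{(r-1,r)}(q^*)}_{q_1}$ is performed at the step that creates that term and the subsequent averagings preserve the property.

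The one point where your plan, taken literally, breaks down is the identification of the free parameter used to enforce \eqref{frm:nf-qstar}. You propose to use the resonant ($k_1=0$) part of a \emph{periodic} generator $\chi_0^{(s)}\in\Pscr_0$. Such a function is independent of $q_1$, so its contribution to the linear-in-$\hat p$ term at $q=q^*$ is $\sum_{j\ge2}\partial_{q_j}\chi_0(q^*)\,\partial_{\hat p_j}f_4^{(0,0)}$: only $n-1$ free parameters against the $n$ coefficients (of $p_1,\dots,p_n$) that must be cancelled, so the resulting system is $n\times(n-1)$ and generically unsolvable (besides, a periodic resonant part would reintroduce $q$-dependence into $f_2$ away from $q^*$). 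The paper instead takes $\chi_0^{(r)}=X_0^{(r)}+\prsca{\zeta^{(r)}}{\hat q}$ with $\zeta^{(r)}\in\RR^{n}$, i.e.\ it adjoins a term \emph{linear} in all $n$ angles, generating a translation of the actions; the homological equation \eqref{frm:traslazione} then becomes the genuinely $n\times n$ system $C\zeta^{(r)}=-\nabla_{\hat p}\bigl\langle f_2^{(r-1,r)}\bigr|_{q=q^*}\bigr\rangle_{q_1}$, solvable by \eqref{frm:twist} with $|\zeta^{(r)}|$ controlled by $1/m$ as you expected. The component $\zeta_1^{(r)}$ is also what keeps the frequency $\omega$ fixed. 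This defect is repairable, but without the linear-in-angle generator the construction of a normal form satisfying \eqref{frm:nf-qstar} does not go through.
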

A comment on the structure of the normal form is in order.  We aim to
continue a generic unperturbed periodic orbit $p_1=0$, $q_1=q_1(0) +
\omega t$, $p=0$, $q=q^*$, thus we look for a normal form which is
able to select those phase shifts, $q^*$, which represent good
candidates for the continuation.  The Hamiltonian is said to be in
normal form up to order $r$ if the constant and linear terms in the
actions are averaged (up to order $r$) with respect to the fast angle,
$q_1$, and if, for a fixed but arbitrary $q^*$, the linear terms in
the action fulfill~\eqref{frm:nf-qstar}.

The crucial point is that the Hamilton equations associated to the
truncated normal form, i.e., neglecting term of order
{$\Oscr(\epsilon^{r+1})$}, once evaluated at
$(\hat{p}=0,q=q^*)$, read
\begin{equation*}
\dot{p}_1 = 0\ ,  \qquad
\dot{q}_1 = \omega\ ,  \qquad
\dot{p} = -\sum_{s=1}^r \nabla_{q} f_0^{(r,s)}\ ,  \qquad
\dot{q} = 0\ .
\end{equation*}
Hence, if
\begin{equation}
\label{frm:qstar}
\sum_{s=1}^r \nabla_q {f_0^{(r,s)}}\big|_{q=q^*} = 0\ ,  
\end{equation}
then $p_1=0$, $q_1=q_1(0)$, $p=0$, $q=q^*$ is the initial datum of a
periodic orbit with frequency $\omega$ for the truncated normal form.
Considering the whole system given by $H^{(r)}$, the initial datum provides
an \emph{approximate} periodic orbit with frequency $\omega$, {which
  turns out to be a relative equilibrium of the truncated
  Hamiltonian}.  In order to provide a precise definition of
\emph{approximate periodic orbit} we
introduce the $T$-period map
$\Upsilon:\Uscr(q^*,0)\subset\reali^{2n-1}\to
\Vscr(0,q^*)\subset\reali^{2n-1}$, a smooth function of the $2n-1$
variables $(q,\hat{p})$, parametrized by the initial phase $q_1(0)$
and the small parameter $\epsilon$, precisely
\begin{equation}
  \label{frm:Ups}
\Upsilon(q(0),\hat{p}(0);\epsilon,q_{1}(0))=
  \begin{pmatrix}
    F(q(0),\hat{p}(0);\epsilon,q_{1}(0))\\
    G(q(0),\hat{p}(0);\epsilon,q_{1}(0))
  \end{pmatrix}  =
  \begin{pmatrix}
  \hat{q}(T)-\hat{q}(0)-\Lambda T\\
  \frac{1}{\epsilon}({{p}(T)-{p}(0)})
  \end{pmatrix} \ ,
\end{equation}
with $\Lambda=(\omega,0)\in\reali^{n}$. The map $\Upsilon$ represents the
$T$-flow of the $n-1$ actions $p$ and of the $n$ angles
$\hat{q}$ for the Hamiltonian $H^{(r)}$.

Let us stress that $p_1=0$, $q_1=q_1$, $p=0$, $q=q^*$ corresponds to a
periodic orbit for the truncated normal form, thus it is evident that
$\Upsilon(q^*,0;\eps,q_1(0))$ is of order\footnote{The actions $p$
  have been rescaled by $\epsilon$ in $\Upsilon$, hence only $G$ is of
  order $\Oscr(\epsilon^{r+1})$ while $F$ is of order
  $\Oscr(\epsilon^{r})$.}  $\Oscr(\eps^{r})$.  Thus, a true periodic
orbit, close to the approximate one, is identified by an initial datum
$(q^*_{\rm p.o.},\hat{p}_{\rm p.o.})\in\Uscr(0,q^*)$ such that
\begin{equation*}
  \Upsilon(q^*_{\rm p.o.},\hat{p}_{\rm p.o.};\epsilon,q_1(0))= 0\ .
\end{equation*}

In order to prove the existence of a unique solution $q^*=q^*_{\rm
  p.o.}$, $\hat{p}=\hat{p}_{\rm p.o.}$, $q_1=q_1(0)$, close enough to
the approximate one, we apply the Newton-Kantorovich algorithm.
Therefore we need to ensure that the Jacobian matrix (with respect to
the initial datum)
\begin{equation}
  \label{frm:M(epsilon)}
 M(\eps) = D_{\hat{p}(0),q^*(0)}\Upsilon(q^*,0;\eps, q_1(0)) 
\end{equation}
is invertible and its eigenvalues are not too small with respect to
$\epsilon^r$.

We state here the main result concerning the continuation of the periodic orbits
\begin{theorem}
  \label{teo:forma-normale-r}
  Consider the map $\Upsilon$ defined in~\eqref{frm:Ups} in a
  neighbourhood of the torus $\hat{p}=0$ and let
  $({q^*}(\epsilon),0)$, with ${q^*}(\eps)$
  satisfying~\eqref{frm:qstar}, an approximate zero of $\Upsilon$,
  namely
  \begin{displaymath}
    \norm{\Upsilon({q^*}(\epsilon),0;\epsilon,q_1(0))}\leq C_1
    \epsilon^r\ ,
  \end{displaymath}
  where $C_1$ is a positive constant just depending on $\Uscr$. Assume
  that the matrix $M(\eps)$ defined in~\eqref{frm:M(epsilon)} is invertible and
  its eigenvalues satisfy
  \begin{equation}
    \label{frm:stima-autovalori}
    |\lambda|\geq \eps^\alpha\ ,\qquad
    \hbox{for}\quad\lambda\in \mathrm{spec} \tond{M(\eps)}\quad
    \hbox{with}\quad 2\alpha<r\ .
  \end{equation}
  Then, there exist $C_0>0$ and $\eps^*>0$ such that for any
  $0\leq\epsilon<\epsilon^*$ there exists a unique
  $({q^*}_{\rm p.o.}(\epsilon),\hat{p}_{\rm p.o.}(\epsilon))\in\Uscr$ which solves
\begin{equation}
\label{frm:estim.sol}
    \Upsilon({q^*}_{\rm p.o.},\hat{p}_{\rm p.o.};\epsilon,q_1(0))=0
    \ ,\qquad\qquad \norm{({q^*}_{\rm
        p.o.},\hat{p}_{\rm p.o.})-({q^*},0)}\leq C_0\eps^{r-\alpha}\ .
\end{equation}
\end{theorem}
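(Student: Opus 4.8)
The plan is to apply the Newton--Kantorovich method to the map $\Upsilon$ of~\eqref{frm:Ups}, taking the approximate zero $z_0=({q^*}(\eps),0)$ as the starting point of the Newton iteration
\[
z_{n+1}=z_n-\quadr{D\Upsilon(z_n)}^{-1}\Upsilon(z_n)\ ,
\]
where $D\Upsilon$ denotes the Jacobian with respect to $(q,\hat p)$ taken along the flow at fixed $q_1(0)$ and $\eps$. The three quantities the Newton--Kantorovich test requires are the size of the initial residual, the norm of the inverse Jacobian at $z_0$, and a second-derivative bound for $\Upsilon$ on a neighbourhood of $z_0$.

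First, the initial residual is controlled directly by hypothesis, $\norm{\Upsilon(z_0)}\leq C_1\eps^r$. Second, the Jacobian at $z_0$ is exactly $M(\eps)$, and from the spectral bound~\eqref{frm:stima-autovalori} I would deduce $\norm{M(\eps)^{-1}}\leq c\,\eps^{-\alpha}$, where the constant $c$ accounts for the passage from the eigenvalues to the operator norm; it is here that one must exploit the structure of $M(\eps)$ (for instance a uniform bound on the condition number of its eigenvector matrix), since for a general non-normal matrix a lower bound on the eigenvalues does not by itself bound the inverse. Third, since $\Upsilon$ is the time-$T$ flow map of the analytic Hamiltonian $H^{(r)}$, its derivatives are analytic and bounded on a fixed neighbourhood of $\hat p=0$; Cauchy estimates on $\Dscr_{\frac{1}{4}(\rho,\sigma)}$ then furnish a constant $L$ with $\norm{D^2\Upsilon}\leq L$ there, uniformly in $\eps$. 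One must keep track of the rescaling by $\eps$ of the action block $G$, which is precisely what renders the two components of $\Upsilon$ comparable in magnitude.

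With these bounds the Newton--Kantorovich quantity satisfies
\[
h\ \sim\ \norm{M(\eps)^{-1}}^2\,L\,\norm{\Upsilon(z_0)}\ \sim\ \eps^{-2\alpha}\cdot\eps^{r}\ =\ \eps^{\,r-2\alpha}\ ,
\]
and the hypothesis $2\alpha<r$ gives $r-2\alpha>0$, so that $h<\tfrac12$ for all $\eps<\eps^*$ with $\eps^*$ small enough. The Newton--Kantorovich theorem then yields a \emph{unique} zero $z_\infty=({q^*}_{\rm p.o.},\hat p_{\rm p.o.})$ in a ball about $z_0$. The radius of that ball is of the order of the first Newton increment, $\norm{M(\eps)^{-1}}\,\norm{\Upsilon(z_0)}\sim\eps^{r-\alpha}$, which yields exactly the estimate $\norm{({q^*}_{\rm p.o.},\hat p_{\rm p.o.})-({q^*},0)}\leq C_0\eps^{r-\alpha}$ of~\eqref{frm:estim.sol}; one checks that this ball lies inside the neighbourhood on which the bounds above hold, which is immediate since $\eps^{r-\alpha}\to0$.

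The delicate points I expect are two. The first is the passage from the eigenvalue lower bound~\eqref{frm:stima-autovalori} to the operator-norm bound on $M(\eps)^{-1}$, which must rely on the symplectic structure of the period map rather than on the eigenvalues alone. The second, and in my view the harder one, is obtaining the uniform second-derivative estimate on $\Upsilon$ while correctly balancing the $\eps$-rescaling of the action block $G$ against the angle block $F$, so that residual and inverse Jacobian enter the test with the powers of $\eps$ written above; getting these exponents right is what makes $2\alpha<r$ the sharp threshold for the argument to close.
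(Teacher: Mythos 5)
Your proposal follows essentially the same route as the paper: the proof there also reduces to the Newton--Kantorovich scheme (Proposition~\ref{prop:N-K}) with residual $O(\eps^{r})$, inverse Jacobian bounded by $C_2\eps^{-\alpha}$, a uniform Lipschitz bound on $D\Upsilon$ from smoothness of the time-$T$ flow, and the resulting $h\sim\eps^{r-2\alpha}$ closing under $2\alpha<r$ to give a unique zero within distance $C_0\eps^{r-\alpha}$. Your remark that the eigenvalue lower bound~\eqref{frm:stima-autovalori} does not by itself control $\norm{M(\eps)^{-1}}$ for a non-normal matrix is a fair point that the paper passes over silently rather than a deviation from its argument.
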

Before entering the technical part of the paper, let us add some more
considerations.  First, as already remarked, the above Theorem
generalizes an old and classical result by Poincar\'e, whose idea was
to average the perturbation $H_1$ with respect to the flow to the
unperturbed periodic solution, where only the fast angle $q_1$
rotates.  The candidates for the continuation, ${q}^*$, were the
non-degenerate relative extrema on the torus $\mathbb{T}^{n-1}$ of the
averaged Hamiltonian $\inter{H_1}_{q_1}$, namely
\begin{displaymath}
  \nabla_{{q}}\inter{H_1}_{q_1}=0\ ,\qquad |D^2_{{q}}
  \inter{H_1}_{q_1}|\not=0\ .
\end{displaymath}
The result of Poincar\'e actually corresponds to the construction of
the first order normal form together with a non-degeneracy assumption
on the $\epsilon$-independent version of \eqref{frm:qstar}, precisely
\begin{equation}
  \label{frm:qstar.Poi}
\nabla_{q} f_0^{(1,1)} = 0\ ,\qquad |D^2_{{q}} f_0^{(1,1)}|\not=0\ .
\end{equation}
In such a case, due to the simplified form of $\Upsilon$, the solution
$(\hat{p}_{\rm p.o.},q^*_{\rm p.o.})$ can be obtained via implicit
function theorem in a neighborhood of the approximate initial datum
$({0},{q}^*)$, being ${q}^*$ a solution of the first
of~\eqref{frm:qstar.Poi}, independent of $\epsilon$.  Hence, our
high-order normal form construction becomes a necessary way in order
to deal with \emph{degenerate cases}, where solutions of
\eqref{frm:qstar.Poi} are not isolated and appear as $d$-parameter
families, thus leading to $|D^2_{{q}} f_0^{(1,1)}|=0$.

For instance, in the application presented in
Section~\ref{sec:applic}, {the solutions of \eqref{frm:qstar.Poi} show
  up as one parameter families ${q}^*(s)$}.  Actually,
solving~\eqref{frm:qstar} (with $r\geq 2$) in place
of~\eqref{frm:qstar.Poi} allows to isolate true candidates for the
continuation.  Let us also remark that our scheme provides a refined
averaged Hamiltonian which allows to treat the totally degenerate
case, i.e., $\nabla_{{q}} f^{(1,1)}_0\equiv 0$. In particular, the
results presented in~\cite{MelS05} by means of Liendstedt perturbation
scheme can be obtained as special cases.

The paper is organized as follows.  In Section~\ref{sec:normal-form}
we detail the normal form algorithm together with the quantitative
estimates The proof of Theorem~\ref{teo:forma-normale-r} is reported
in Section~\ref{sec:teo}.  Section~\ref{sec:applic} provides a
simplified version of Theorem~\ref{teo:forma-normale-r}, namely
Theorem~\ref{teo:forma-normale-2}, for one parameter families of
solutions of \eqref{frm:qstar.Poi}, under the assumption that only the
second normal form step is enough to improve the accuracy of the
approximate periodic orbit.  Moreover, a pedagogical example inspired
by the problem of degenerate vortexes in a squared lattice dNLS model
is presented at the end of Section~\ref{sec:applic}.
Appendices~\ref{app:est} and~\ref{app:fixedpoint} include the
technicalities related to the normal form estimates and the
Newton-Kantorovich method, respectively.


\section{Normal formal algorithm and analytical estimates}
\label{sec:normal-form}

This Section is devoted to the formal algorithm that takes a
Hamiltonian~\eqref{frm:H(0)} and brings it into normal form up to an
arbitrary, but finite, order $r$.  We include all the (often tedious)
formul{\ae} that will be used in order to estimate the terms appearing
in the normalization process. We use the formalism of Lie series and
Lie transforms (see, e.g.,~\cite{Gro60} and~\cite{Gio03} for a
self-consistent introduction).

The transformation at step $r$ is generated via composition of two Lie
series of the form
$$
\exp(L_{\chi_2^{(r)}}) \circ \exp(L_{\chi_0^{(r)}})\ ,
$$
where
\begin{equation}
\chi^{(r)}_{0} = X^{(r)}_{0} + \prsca{\zeta^{(r)}}{\vphi}\ ,
\label{frm:chi0-r}
\end{equation}
with $\zeta^{(r)}\in\reali^n$ and $X^{(r)}_{0}\in\Pscr_0$,
$\chi^{(r)}_{2}\in\Pscr_2$ are of order $\Oscr(\epsilon^r)$.  Here, as
usual, we denote by $L_g\cdot$ the Poisson bracket $\{g,\cdot\}$.  The
functions $\chi_0^{(r)}$ and $\chi_2^{(r)}$ are unknowns to be
determined so that the transformed Hamiltonian is in normal form up to
order $r$.

The relevant algebraic property of the $\Pscr_{\ell}$ classes of
function is stated by the following
\begin{lemma}
  \label{lem:poisson}
  Let $f\in\Pscr_{s_1}$ and $g\in\Pscr_{s_2}$, then
  $\{f,g\}\in\Pscr_{s_1+s_2-2}$.
 \end{lemma}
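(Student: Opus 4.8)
The plan is to verify the claim by a direct computation from the definition of the Poisson bracket, exploiting the fact that the class $\Pscr_{2l}$ consists precisely of those functions that are homogeneous of degree $l$ in the momenta $\hat p$, with an arbitrary Fourier series in the angles $\hat q$. Writing $s_1 = 2l_1$ and $s_2 = 2l_2$, I would expand $f\in\Pscr_{s_1}$ and $g\in\Pscr_{s_2}$ in the Fourier-Taylor form~\eqref{frm:funz}, so that $f = \sum_{|i|=l_1}\sum_{k} f_{i,k}\,\hat p^{i}e^{\imunit\inter{k,\hat q}}$ and analogously for $g$ with $|i'|=l_2$, and then substitute into $\{f,g\}=\sum_{j=1}^n\bigl(\parder{f}{\hat q_j}\parder{g}{\hat p_j}-\parder{f}{\hat p_j}\parder{g}{\hat q_j}\bigr)$.

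The key observation concerns how the two kinds of partial derivatives act on the degree in $\hat p$. Differentiating in an angle, $\parder{f}{\hat q_j}=\sum_{|i|=l_1}\sum_{k}\imunit k_j f_{i,k}\,\hat p^{i}e^{\imunit\inter{k,\hat q}}$, leaves the momentum-degree unchanged, hence $\parder{f}{\hat q_j}\in\Pscr_{s_1}$; differentiating in a momentum, $\parder{g}{\hat p_j}=\sum_{|i'|=l_2}\sum_{k'} i'_j\,g_{i',k'}\,\hat p^{i'-e_j}e^{\imunit\inter{k',\hat q}}$, lowers the degree by one, hence $\parder{g}{\hat p_j}\in\Pscr_{s_2-2}$ (with the convention that terms with $i'_j=0$ drop out, and $e_j$ the $j$-th unit multi-index). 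Consequently each product $\parder{f}{\hat q_j}\parder{g}{\hat p_j}$ and $\parder{f}{\hat p_j}\parder{g}{\hat q_j}$ is homogeneous of degree $l_1+l_2-1$ in $\hat p$, that is, it lies in $\Pscr_{2(l_1+l_2-1)}=\Pscr_{s_1+s_2-2}$.

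It then remains to check that the full bracket retains the structural form~\eqref{frm:funz}. This follows because products of monomials and exponentials recombine as $\hat p^{i}\hat p^{i'-e_j}=\hat p^{i+i'-e_j}$ and $e^{\imunit\inter{k,\hat q}}e^{\imunit\inter{k',\hat q}}=e^{\imunit\inter{k+k',\hat q}}$, so each term is again of the prescribed Fourier-Taylor type with total momentum multi-index of length $l_1+l_2-1$; summing over $j$ and over the pairs of indices stays within $\Pscr_{s_1+s_2-2}$, since this class is a vector space. There is essentially no analytic obstacle here, the statement being purely algebraic about membership in the class; the only point requiring care is the degenerate case in which $f$ or $g$ is independent of $\hat p$. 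If, say, $l_2=0$ then $\parder{g}{\hat p_j}\equiv 0$, and if in addition $l_1=0$ the whole bracket vanishes, consistently with the convention $\Pscr_{-2}=\{0\}$ and with $s_1+s_2-2=-2$.
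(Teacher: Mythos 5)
Your proof is correct and is precisely the ``straightforward proof'' that the paper leaves to the reader: a direct computation showing that $\partial/\partial\hat q_j$ preserves and $\partial/\partial\hat p_j$ lowers by one the homogeneity degree in $\hat p$, so each term of the bracket lands in $\Pscr_{s_1+s_2-2}$. Your attention to the degenerate case $l_1=l_2=0$ and the convention $\Pscr_{-2}=\{0\}$ is a welcome extra check.
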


\noindent The straightforward proof is left to the reader.

The starting Hamiltonian has the form
\begin{equation}
\begin{aligned}
  H^{(0)} &= \omega p_1
+\sum_{s\geq0}\sum_{l>1}f_{2l}^{(0,s)}\cr
&\quad
+\sum_{s\geq1} f_{0}^{(0,s)}
+\sum_{s\geq1} f_{2}^{(0,s)}\ ,
\end{aligned}
\label{frm:H(0)-f}
\end{equation}
where $f_{2l}^{(0,s)}\in\Pset_{2l}$ and is of order
$\Oscr(\epsilon^s)$.

We now describe the generic $r$-th normalization step, starting from the Hamiltonian in normal form up to order $r-1$, $H^{(r-1)}$, namely
\begin{equation}
\begin{aligned}
H^{(r-1)} &= \omega p_1
+\sum_{s<r} f_{0}^{(r-1,s)}
+\sum_{s<r} f_{2}^{(r-1,s)}\cr
&\quad
+f_{0}^{(r-1,r)}
+f_{2}^{(r-1,r)}\cr
&\quad
+\sum_{s>r} f_{0}^{(r-1,s)}
+\sum_{s>r} f_{2}^{(r-1,s)}\cr
&\quad
+\sum_{s\geq 0}\sum_{l>1}f_{2l}^{(r-1,s)}\ ,
\label{frm:H(r-1)-f}
\end{aligned}
\end{equation}
where $f_{2l}^{(r-1,s)}\in\Pset_{2l}$ is of order
$\Oscr(\epsilon^s)$; $f_{0}^{(r-1,s)}$ and
$f_{2}^{(r-1,s)}$ for $1\leq s < r$ are in normal form.

\subsection{First stage of the normalization step}\label{sbs:step1}
Our aim is to put the term $f_{0}^{(r-1,r)}$ in normal form and to
keep fixed the harmonic frequencies of the selected resonant torus.
We determine the generating function $\chi^{(r)}_{0}=X^{(r)}_{0} +
\prsca{\zeta^{(r)}}{\hat q}$ by solving the homological equations
\begin{equation*}
  \begin{aligned}
\lie{X^{(r)}_{0}} \omega p_1
+ f_{0}^{(r-1,r)}  =\langle f_{0}^{(r-1,r)} \rangle_{q_1} \ ,\\
\lie{\prsca{\zeta^{(r)}}{{\hat q}}} f_{4}^{(0,0)} +
\Bigl\langle
f_{2}^{(r-1,r)}
\Bigr|_{q=q^*}\Bigr\rangle_{q_1}
= 0\ .
  \end{aligned}
\end{equation*}
Considering the Taylor-Fourier expansion
\begin{equation*}
f_{0}^{(r-1,r)}({\hat q})= \sum_{k}
c_{0,,k}^{(r-1)}\exp(\imunit \prsca{k}{{\hat q}})\ ,
\end{equation*}
we readily get
\begin{equation*}
X^{(r)}_{0}({\hat q}) =\sum_{k_1\neq0}
\frac{c_{0,k}^{(r-1)}}{\imunit {k_1}{\omega}}
\exp(\imunit \prsca{k}{{\hat q}})\ .
\end{equation*}
The translation vector, $\zeta^{(r)}$, is determined by solving the
linear system
\begin{equation}
\sum_j C_{ij} \zeta_j^{(r)} +
\parder{}{{\hat p_i}}\Bigl\langle f_{2}^{(r-1,r)}
\Bigr|_{q=q^*}\Bigr\rangle_{q_1} = 0\ .
\label{frm:traslazione}
\end{equation}
This translation, which involves the linear term in the actions
$f_{2}^{(r-1,r)}$, allows to keep fixed the frequency $\omega$ and
kills the small transversal frequencies in the angles $q$.

The transformed Hamiltonian is computed as
\begin{equation*}
H^{(\rmI;r-1)} = \exp\left(\lie{\chi_{0}^{(r)}}\right)H^{(r-1)}
\end{equation*}
and has a form similar to~\eqref{frm:H(r-1)-f}, precisely
\begin{equation}
\begin{aligned}
H^{(\rmI;r-1)}&=
\exp\Bigl(\lie{\chi^{(r)}_{0}}\Bigr) H^{(r-1)} = \cr
&= \omega p_1
+\sum_{s<r} f_{0}^{(\rmI;r-1,s)}
+\sum_{s<r} f_{2}^{(\rmI;r-1,s)}\cr
&\quad
+f_{0}^{(\rmI;r-1,r)}
+f_{2}^{(\rmI;r-1,r)}\cr
&\quad
+\sum_{s>r} f_{0}^{(\rmI;r-1,s)}
+\sum_{s>r} f_{2}^{(\rmI;r-1,s)}\cr
&\quad
+\sum_{s\geq 0}\sum_{l>1}f_{2l}^{(\rmI;r-1,s)}\ .
\label{frm:H(I;r-1)-f}
\end{aligned}
\end{equation}
The functions $f_{2l}^{(\rmI;r-1,s)}$ are recursively defined as
\begin{equation}
\begin{aligned}
f_{0}^{(\rmI;r-1,r)} &= \bigl\langle f_{0}^{(r-1,r)} \bigr\rangle_{q_1}\ ,
\cr
f_{2}^{(\rmI;r-1,r)} &= f_{2}^{(r-1,r)} - \bigl\langle f_{2}^{(r-1,r)}(q^*)\bigr\rangle_{q_1} + \lie{X_0^{(r)}} f_4^{(0,0)}\ ,
\cr
\noalign{\smallskip}
f_{2l}^{(\rmI;r-1,s)} &= \sum_{j=0}^{\lfloor s/r\rfloor}
\frac{1}{j!} \lie{\chi^{(r)}_{0}}^{j} f^{(r-1,s-jr)}_{2l+2j}\ ,
\label{frm:f^(I;r-1,s)}
\end{aligned}
\end{equation}
with $f_{2l}^{(\rmI;r-1,s)}\in\Pset_{2l}$.

\subsection{Second stage of the normalization step}
\label{sbs:step2}
We now put $f_{2}^{(\rmI;r-1,r)}$ in normal form, by averaging with
respect to the fast angle $q_1$. This is necessary to avoid small
oscillations of $q$ around $q^*$.  We
determine the generating function $\chi^{(r)}_2$ by solving the
homological equation
\begin{equation*}
\lie{\chi^{(r)}_{2}} {\omega}{p_1}
+f_{2}^{(\rmI;r-1,r)}
= \left\langle
f_{2}^{(\rmI;r-1,r)}
\right\rangle_{q_1} \ .
\end{equation*}
Considering again the Taylor-Fourier expansion
\begin{equation*}
f_{2}^{(\rmI;r-1,r)}(\hat p,{\hat q})
=\sum_{|l|=1\atop k}\,
c_{l,k}^{(\rmI;r-1)}\hat p^{l}\exp(\imunit\prsca{k}{{\hat q}})
\end{equation*}
we get
\begin{equation*}
\chi_{2}^{(r)}(\hat p,{\hat q})
=\sum_{|l|=1\atop k_1\neq0}\,
\frac{c_{l,k}^{(\rmI;r-1)}\hat p^{l}\exp(\imunit\prsca{k}{{\hat q}})}{\imunit k_1 \omega}\ .
\end{equation*}

The transformed Hamiltonian is computed as
\begin{equation*}
H^{(r)} = \exp\left(\lie{\chi_{2}^{(r)}}\right)H^{(\rmI;r-1)}
\end{equation*}
and is given the form~\eqref{frm:H(r-1)-f}, replacing
the upper index $r-1$ by $r\,$, with 
\begin{equation}
\begin{aligned}
f_{2}^{(r,r)} &= \langle f_{2}^{(\rmI;r-1,r)}\rangle_{q_1}\ ,
\cr
f_{2}^{(r,jr)} &= \frac{1}{(j-1)!}\lie{\chi^{(r)}_{2}}^{j-1}
\left(
\frac{1}{j}\langle f_{2}^{(\rmI;r-1,r)}\rangle_{q_1}
+\frac{j-1}{j}f_{2}^{(\rmI;r-1,r)}
\right)
\cr
&\quad+
\sum_{j=0}^{\lfloor s/r\rfloor-2}
\frac{1}{j!} \lie{\chi^{(r)}_{2}}^{j} f^{(\rmI;r-1,s-jr)}_{2}\ ,
\cr
\noalign{\smallskip}
f_{2l}^{(r,s)} &= \sum_{j=0}^{\lfloor s/r\rfloor}
\frac{1}{j!} \lie{\chi^{(r)}_{2}}^{j} f^{(\rmI;r-1,s-jr)}_{2l}\ .
\label{frm:f^(r,s)}
\end{aligned}
\end{equation}

\subsection{Analytic estimates}\label{sbs:stime}

In order to translate our formal algorithm into a recursive scheme of
estimates on the norms of the various functions, we need to introduce
a sequence of restrictions of the domain so as to apply Cauchy's
estimate.  Having fixed $d\in\reali$, $0 < d \leq 1/4$, we consider a
sequence ${\delta}_{r\geq1}$ of positive real numbers satisfying
\begin{equation}
  \delta_{r+1} \leq \delta_r\ ,\quad \sum_{r\geq1} \delta_r \leq \frac{d}{2}\ .
  \label{frm:delta_r}
\end{equation}
Moreover, we introduce a further sequence ${d}_{r\geq0}$ of real
numbers recursively defined as
\begin{equation}
  d_0 = 0\ ,\quad d_r = d_{r-1} + 2\delta_r\ .
  \label{frm:d_r}
\end{equation}

In order to precisely state the iterative Lemma, we need to introduce
the quantities $\Xi_r$, parametrized by the index $r$, as
\begin{equation}
  \label{frm:Xi-r}
  \Xi_r=\max\left(1,
  \frac{E}{\omega\delta_r^2\rho\sigma}+  \frac{eE}{4m\delta_r\rho^2},
  2 + \frac{E}{2e\omega\delta_r\rho\sigma},
  \frac{E}{4\omega\delta_r^2\rho\sigma}
  \right)
  \ .
\end{equation}
The number of terms in formul{\ae} \eqref{frm:f^(I;r-1,s)} and
\eqref{frm:f^(r,s)} is controlled by the two sequences
$\{\nu_{r,s}\}_{r\ge 0\,,\,s\ge 0}$ and
$\{\nu_{r,s}^{(\text{I})}\}_{r\ge 1\,,\,s\ge 0}$:
\begin{equation} \label{frm:nu-sequence}
\vcenter{\openup1\jot \halign{
    $\displaystyle\hfil#$&$\displaystyle{}#\hfil$&$\displaystyle#\hfil$\cr
    \nu_{0,s} &= 1 &\quad\hbox{for } s\geq 0\,, \cr
    \nu_{r,s}^{(\text{I})} &= \sum_{j=0}^{\lfloor s/r \rfloor}
    \nu_{r-1,r}^{j}\nu_{r-1,s-jr} &\quad\hbox{for } r\geq 1\,,\ s\geq
    0\,, \cr \nu_{r,s} &= \sum_{j=0}^{\lfloor s/r \rfloor}
    (3\nu_{r-1,r})^{j}\nu_{r,s-jr}^{(\text{I})}
    &\quad\hbox{for } r\geq 1\,,\ s\geq 0\,.  \cr }}
\end{equation}
Let us stress that when $s<r$, the above simplify as
\begin{displaymath}
  \nu_{r,s}^{(\text{I})} = \nu_{r-1,s}\ ,\qquad\qquad
  \nu_{r,s}=\nu_{r,s}^{(\text{I})}\ ,
\end{displaymath}
namely
\begin{displaymath}
  \nu_{r,s}=\nu_{r-1,s}=\ldots=\nu_{s,s}\ .
\end{displaymath}

\smallskip
Let us introduce the quantities $b(\rmI;r,s,l)$ and $b(r,s,l)$ (being
$r$, $s$ and $l$ positive integers) that will be useful to
control the exponents of the $\Xi_r$ in the normalization
procedure,
$$
b(\rmI;r,s,l) =\left\{
\begin{aligned}
  &s &\qquad & \hbox{if } r=1\ ,\cr
  &0 &\qquad &\hbox{if } r\geq2, s=0\ ,\cr
  &\scriptstyle 3s-\left\lfloor\frac{s+r-1}{r}\right\rfloor-\left\lfloor\frac{s+r-2}{r}\right\rfloor-2 &\qquad &\hbox{if } r\geq2, 0<s\leq r, l = 0\cr
  &\scriptstyle 3s-\left\lfloor\frac{s+r-1}{r}\right\rfloor-\left\lfloor\frac{s+r-2}{r}\right\rfloor-1 &\qquad &\hbox{if } r\geq2, r<s\leq 2r, l = 0\cr
  &\scriptstyle 3s-\left\lfloor\frac{s+r-1}{r}\right\rfloor-\left\lfloor\frac{s+r-2}{r}\right\rfloor-1 &\qquad &\hbox{if } r\geq2, 0<s\leq r, l = 2\cr
  &\scriptstyle 3s-\left\lfloor\frac{s+r-1}{r}\right\rfloor-\left\lfloor\frac{s+r-2}{r}\right\rfloor &\qquad &\hbox{in the other cases} \cr
\end{aligned}
\right.
$$
and
$$
b(r,s,l) =\left\{
\begin{aligned}
  &0 &\qquad & \hbox{if } r>0, s=0\cr
  &\scriptstyle 3s-\left\lfloor\frac{s+r-1}{r}\right\rfloor-w_l &\qquad &\hbox{if } r=1, s>0\ ,\cr
  &\scriptstyle 3s-\left\lfloor\frac{s+r-1}{r}\right\rfloor-\left\lfloor\frac{s+r-2}{r}\right\rfloor-2 &\qquad &\hbox{if } r\geq2, 0<s\leq r, l = 0\cr
  &\scriptstyle 3s-\left\lfloor\frac{s+r-1}{r}\right\rfloor-\left\lfloor\frac{s+r-2}{r}\right\rfloor-1 &\qquad &\hbox{if } r\geq2, r<s\leq 2r, l = 0\cr
  &\scriptstyle 3s-\left\lfloor\frac{s+r-1}{r}\right\rfloor-\left\lfloor\frac{s+r-2}{r}\right\rfloor-1 &\qquad &\hbox{if } r\geq2, 0<s\leq r, l = 2\cr
  &\scriptstyle 3s-\left\lfloor\frac{s+r-1}{r}\right\rfloor-\left\lfloor\frac{s+r-2}{r}\right\rfloor &\qquad &\hbox{in the other cases} \cr
\end{aligned}
\right.
$$
with $w_0 = 2$, $w_2 = 1$ and $w_l=0$ for $l\geq2$.

\smallskip
We are now ready to state the main Lemma collecting the estimates for
the generic $r$-th normalization step of the normal form algorithm.

\begin{lemma} \label{lem:lemmone}
  Consider a Hamiltonian $H^{(r-1)}$ expanded as in~\frmref{frm:H(r-1)-f}.  Let
  $\chi^{(r)}_{0}=X^{(r)}_{0} + \prsca{\zeta^{(r)}}{\vphi}$ and $\chi_2^{(r)}$ be
  the generating functions used to put the Hamiltonian in normal form at order
  $r$, then one has
  \begin{equation}
  \begin{aligned}
    \|X_0^{(r)}\|_{1-d_{r-1}} &\leq \frac{1}{\omega} \nu_{r-1,r} \Xi_r^{3r-4}  E\epsilon^r\ ,\cr
    |\zeta^{(r)}| &\leq  \frac{1}{4 m \rho} \nu_{r-1,r} \Xi_r^{3r-3}  E\epsilon^r\ ,\cr
    \|\chi_2^{(r)}\|_{1-d_{r-1}-\delta_r} &\leq \frac{1}{\omega}
    3\nu_{r-1,r} \Xi_r^{3r-3}  \frac{E}{4}\epsilon^r\ .
    \label{frm:chi-estimates}
  \end{aligned}
  \end{equation}
  The terms appearing in the expansion of $H^{(\rmI;r-1)}$
  in~\frmref{frm:H(I;r-1)-f} are bounded as
  \begin{equation*}
  \|f_{l}^{(\rm I;r-1,s)}\|_{1-d_{r-1}-\delta_r}  \leq \nu^{(\rm I)}_{r,s}\Xi_r^{b(\rmI;r,s,l)}
   \frac{E}{2^l}\epsilon^s\ .
  \end{equation*}
  The terms appearing in the expansion of $H^{(r)}$
  in~\frmref{frm:f^(r,s)} are bounded as
  \begin{equation*}
    \|f_{l}^{(r,s)}\|_{1-d_r} \leq \nu_{r,s} \Xi_r^{b(r,s,l)} 
    \frac{E}{2^l}\epsilon^s\ .
  \end{equation*}
\end{lemma}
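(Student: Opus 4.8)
The engine of the proof is a quantitative Poisson-bracket estimate, which I would establish first and relegate to Appendix~\ref{app:est}: if $f\in\Pscr_{s_1}$ and $g\in\Pscr_{s_2}$ are analytic on $\Dscr_{(1-d)(\rho,\sigma)}$ and $\delta>0$ is a shrinking parameter, then on $\Dscr_{(1-d-\delta)(\rho,\sigma)}$ one has a bound of the shape $\|\{f,g\}\|_{1-d-\delta}\le\frac{c}{\delta^2\rho\sigma}\|f\|_{1-d}\|g\|_{1-d}$, with $c$ collecting the degree-dependent constants. This follows from $\{f,g\}=\sum_i(\parder{f}{\hat p_i}\parder{g}{\hat q_i}-\parder{f}{\hat q_i}\parder{g}{\hat p_i})$ together with Cauchy estimates for the two kinds of derivative: an angle derivative costs a factor $\tfrac{1}{e\delta\sigma}$ (via $\sup_k|k|e^{-|k|\delta\sigma}\le\tfrac1{e\delta\sigma}$) and an action derivative a factor $\tfrac{l}{\delta\rho}$, which is precisely the mechanism generating the $\tfrac{1}{\delta_r^2\rho\sigma}$ factors inside $\Xi_r$ in~\eqref{frm:Xi-r}. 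By Lemma~\ref{lem:poisson} the class index is also tracked correctly, so $\{f,g\}\in\Pscr_{s_1+s_2-2}$.

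I would run the step assuming on $H^{(r-1)}$ the inductive bounds $\|f_l^{(r-1,s)}\|_{1-d_{r-1}}\le\nu_{r-1,s}\Xi_{r-1}^{b(r-1,s,l)}\tfrac{E}{2^l}\epsilon^s$ (the previous instance of the conclusion, with base case~\eqref{frm:decadimento-iniziale}), and I would use throughout the monotonicity $\Xi_{r-1}\le\Xi_r$, which holds because $\delta_r$ is non-increasing. First, the generating functions. The homological equations are solved explicitly as displayed; their only small divisors are $\imunit k_1\omega$ with $k_1$ a nonzero integer, so $|k_1\omega|\ge\omega$ and dividing costs a single factor $1/\omega$. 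This gives $\|X_0^{(r)}\|_{1-d_{r-1}}\le\tfrac1\omega\|f_0^{(r-1,r)}\|_{1-d_{r-1}}$ and the analogous bound for $\chi_2^{(r)}$; feeding in the inductive bounds on $f_0^{(r-1,r)}$ and on the three pieces of $f_2^{(\rmI;r-1,r)}$ reproduces the exponents $3r-4$ and $3r-3$ in~\eqref{frm:chi-estimates}. For the translation vector I would invert the linear system~\eqref{frm:traslazione} using the twist hypothesis~\eqref{frm:twist}, which yields $|\zeta^{(r)}|\le\tfrac1m\,|\nabla_{\hat p}\langle f_2^{(r-1,r)}|_{q=q^*}\rangle_{q_1}|$, the gradient being controlled by one more Cauchy factor $1/\rho$; this gives the bound with prefactor $\tfrac{1}{4m\rho}$.

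The heart of the proof is to propagate these bounds through the explicit Lie-series formulas~\eqref{frm:f^(I;r-1,s)} and~\eqref{frm:f^(r,s)}. Every term there is a normalized iterated bracket $\tfrac1{j!}\lie{\chi^{(r)}}^{j}f^{(\cdot,s-jr)}$, which by Lemma~\ref{lem:poisson} lands in the correct class $\Pscr_{2l}$ and, since $\chi^{(r)}=\Oscr(\epsilon^r)$, is of order $\Oscr(\epsilon^s)$. Estimating the whole Lie series on a single domain restriction $\delta_r$ per stage, each order $j$ contributes a geometric factor bounded by a fixed power of $\Xi_r$ together with $\epsilon^r$; the total accumulated power of $\Xi_r$ is exactly what the exponents $b(\rmI;r,s,l)$ and $b(r,s,l)$ record. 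The number of summands produced is controlled verbatim by the recursions~\eqref{frm:nu-sequence}: $\nu_{r,s}^{(\rmI)}$ mirrors $\sum_j\nu_{r-1,r}^{j}\nu_{r-1,s-jr}$ in~\eqref{frm:f^(I;r-1,s)}, and $\nu_{r,s}$ mirrors $\sum_j(3\nu_{r-1,r})^j\nu_{r,s-jr}^{(\rmI)}$ in~\eqref{frm:f^(r,s)}, the factor $3$ recording the three-term structure of the transformed linear term $f_2^{(\rmI;r-1,r)}$ (the two averaging pieces plus $\lie{X_0^{(r)}}f_4^{(0,0)}$). Combining the counting factor $\nu$, the growth factor $\Xi_r^{b}$ and the base decay $E/2^l$ closes the induction on $s$ at fixed $r$.

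The main obstacle is exactly this combinatorial bookkeeping of the $\Xi_r$-exponents. One must verify that the number of composed brackets producing an $\Oscr(\epsilon^s)$ term, in each of the case distinctions on $s$ relative to $r$ and on $l\in\{0,2\}$, is matched by the floor-function expressions $3s-\lfloor\frac{s+r-1}{r}\rfloor-\lfloor\frac{s+r-2}{r}\rfloor-\cdots$ defining $b$; this is delicate and is best done by a separate induction on $s$. A secondary point is to check that the domains never degenerate: by~\eqref{frm:delta_r} and~\eqref{frm:d_r} one has $d_r=d_{r-1}+2\delta_r$ with $\sum_{r\ge1}\delta_r\le d/2\le\tfrac18$, so each stage removes a single $\delta_r$, the two stages together removing $2\delta_r$ and taking $1-d_{r-1}$ to $1-d_r$, while the accumulated restriction stays within $[\tfrac14,\tfrac34](\rho,\sigma)$ as required by the domain inclusions~\eqref{frm:domini}. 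Aside from this bookkeeping, the argument is a routine iteration of the single Poisson-bracket estimate.
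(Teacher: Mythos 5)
Your proposal is correct and follows essentially the same route as the paper: induction on $r$ with the initial decay \eqref{frm:decadimento-iniziale} as base, explicit solution of the homological equations with the single small divisor $k_1\omega$, inversion of \eqref{frm:traslazione} via the twist condition, Cauchy-type estimates for iterated Lie derivatives producing the $\delta_r^{-2}\rho^{-1}\sigma^{-1}$ factors collected in $\Xi_r$, the sequences \eqref{frm:nu-sequence} counting summands, and the floor-function exponents $b$ closing the bookkeeping (the paper isolates the key inequality $3rj-2j+b(r-1,s-jr,l+2j)\leq b(\rmI;r-1,s,l)$ in Lemma~\ref{lem:parti-intere} and treats $r=1,2$ as explicit base cases since $b$ is defined piecewise there). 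The only cosmetic difference is that you invoke a generic bilinear Poisson-bracket bound, whereas the paper exploits the special structure of the generators ($\chi_0^{(r)}$ depending only on the angles, $\chi_2^{(r)}$ linear in the actions) to get the precise per-bracket costs appearing in \eqref{frm:stima-liechi0}--\eqref{frm:stima-liechi2}.
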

The proof of Lemma~\lemref{lem:lemmone} is deferred to Section~\ref{sbs:proof-lemmone}.

\subsection{Proof of Proposition~\ref{pro:forma-normale-1}}
We give here a sketch of the proof of
Proposition~\ref{pro:forma-normale-1}.  The proof is based on standard
arguments in the Lie series theory, that we recall here, referring to,
e.g.,~\cite{GioLoc-1997, GioLocSan-2014, SanCec-2017}, for more
details.

We give an estimate for the canonical transformation.  We denote
by $(\hat{p}^{(0)}, \hat{q}^{(0)})$ the original coordinates, and by
$(\hat{p}^{(r)}, \hat{q}^{(r)})$ the coordinates at step $r$. We also
denote by $\phi^{(r)}$ the canonical transformation mapping
$(\hat{p}^{(r)}, \hat{q}^{(r)})$ to $(\hat{p}^{(r-1)},
\hat{q}^{(r-1)})$, precisely
$$
\begin{aligned}
  \hat{p}^{(r-1)} &= \exp(L_{\chi_0^{(r)}}) \hat{p}^{(\rmI,r-1)} =
  \hat{p}^{(\rmI,r-1)} + \parder{\chi_0^{(r)}}{\hat{q}^{(r-1)}}\ ,\cr
  \hat{p}^{(\rmI,r-1)} &= \exp(L_{\chi_2^{(r)}}) \hat{p}^{(r)} =
  \hat{p}^{(r)} + \sum_{s\geq1}\frac{1}{s!} L_{\chi_2^{(r)}}^{s-1}
  \parder{\chi_2^{(r)}}{\hat{q}^{(r)}}\ ,\cr \hat{q}^{(r-1)} &=
  \exp(L_{\chi_2^{(r)}}) \hat{q}^{(r)} = \hat{q}^{(r)} -
  \sum_{s\geq1}\frac{1}{s!} L_{\chi_2^{(r)}}^{s-1}
  \parder{\chi_2^{(r)}}{\hat{p}^{(r)}}\ .
\end{aligned}
$$

Consider now a sequence of domains $\Dscr_{(3d-d_r)(\rho,\sigma)}$,
using Lemma~\lemref{lem:lemmone} we get
\begin{equation}
\begin{aligned}
  \Bigl|\hat{p}^{(r-1)} - \hat{p}^{(\rmI,r-1)}\Bigr| &<
  \left(\frac{1}{\omega e\delta_r \sigma}+\frac{1}{4 m \rho} \right)
  \Xi^{3r} \frac{100^r}{20} E \epsilon^r\ ,
  \cr
  \Bigl|\hat{p}^{(\rmI,r-1)} - \hat{p}^{(r)}\Bigr| &<
  \frac{1}{4 \omega e \delta_r \sigma} \Xi^{3r}
  \frac{100^r}{20} E \epsilon^r
  \sum_{s\geq1}
  \left(\frac{1}{\omega \delta_r^2 \rho \sigma} \Xi^{3r} \frac{100^{r}}{20} E \epsilon^r\right)^{s-1}\ ,
  \cr
  \Bigl|\hat{q}^{(r-1)} - \hat{q}^{(r)}\Bigr| &<
  \frac{1}{4 \omega \delta_r \rho} \Xi^{3r} \frac{100^r}{20} E
  \epsilon^r \sum_{s\geq1} \left(\frac{1}{\omega \delta_r^2 \rho
  \sigma} \Xi^{3r} \frac{100^{r}}{20} E \epsilon^r\right)^{s-1}\ .
\end{aligned}
\label{frm:trasf-coord}
\end{equation}

Thus if $\epsilon$ is small enough (for a very rough estimate take
$\epsilon < \frac{1}{100 \Xi^4}$) the series~\frmref{frm:trasf-coord}
defining the canonical transformation are absolutely convergent in the
domain $\Dscr_{(3d-d_{r-1}-\delta_r)(\rho,\sigma)}$, hence analytic.
Furthermore, one has the estimates
$$
|\hat{p}^{(r-1)} - \hat{p}^{(r)}| < \delta_r \rho\ ,\qquad
|\hat{q}^{(r-1)} - \hat{q}^{(r)}| < \delta_r \sigma\ .
$$
A similar argument applies to the inverse of $\phi^{(r)}$, which is defined as a composition of
Lie series generated by $−\chi_2^{(r)}$ and $-\chi_0^{(r)}$, thus we get
$$
\Dscr_{(3d-d_{r})(\rho,\sigma)}\subset \phi^{(r)}(\Dscr_{(3d-d_{r-1}-\delta_r)(\rho,\sigma)})\subset\Dscr_{(3d-d_{r-1})(\rho,\sigma)}\ .
$$
Consider now the sequence of transformations
$\Phi^{(\bar{r})}=\phi^{(1)}\circ\ldots\circ\phi^{(\bar{r})}$.  For
$(\hat{p}^{(r-1)}, \hat{q}^{(r-1)})\in\Dscr_{(3d-d_{r-1})(\rho,\sigma)}$ the transformation is clearly analytic and one has
$$
|\hat{p}^{(0)} - \hat{p}^{(\bar{r})}| <  \rho \sum_{j=1}^{\bar{r}} \delta_j\ ,\qquad
|\hat{q}^{(0)} - \hat{q}^{(\bar{r})}| <  \sigma \sum_{j=1}^{\bar{r}} \delta_j\ .
$$
Setting $d=\frac{1}{4}$ and using~\frmref{frm:delta_r}, one has $\sum_{j\geq1} \delta_j\leq
\frac{d}{2}=\frac{1}{8}$, thus~\frmref{frm:domini} immediately follows.
Finally, the estimates for the Hamiltonian in normal form had been
already gathered in Lemma~\lemref{lem:lemmone}.  This concludes the
proof of Proposition~\ref{pro:forma-normale-1}.


\section{Proof of Theorem~\ref{teo:forma-normale-r}}
\label{sec:teo}

In this Section we develop in a more detailed way the strategy used to
get Theorem~\ref{teo:forma-normale-r} from the normal form
constructed.  We have shown in the previous Section that, by mean of a
canonical and near to the identity change of coordinates, it is
possible to bring the original Hamiltonian the form
\eqref{frm:H(r)}. We have already stressed in the Introduction the
main feature of our construction: if one considers the approximate
equations of motion corresponding to the normal form truncated at
order $\Oscr\tond{\eps^r}$, when evaluated on $({q}={q}^*,\hat{p}=0)$,
they provide a periodic orbit of frequency $\omega$ once ${q}^*$
fulfills the already mentioned equation
\eqref{frm:qstar}. Generically, for $r\geq 2$, the value ${q}^*$ would
depend continuously on $\varepsilon$, precisely
${q}^*(\varepsilon)={q}_0^*+ \mathcal{O}(\varepsilon)$, with ${q}_0^*$
solution of the $\eps$-independent equation \eqref{frm:qstar.Poi}.

The periodicity of an orbit for the full Hamiltonian \eqref{frm:H(r)}
is given by
$$
\begin{aligned}
\hat{q}(T)-\hat{q}(0)-{\Lambda}T
&= \int_0^T \nabla_p\quadr{f_{4}^{(r,0)} + \sum_{s=1}^r f_2^{(r,s)}}\, ds +
\mathcal{O}(\vert {p}\vert^2) + \mathcal{O}(\varepsilon\vert
        {p}\vert) + \Oscr\tond{\varepsilon^{r+1}} = 0\ , \\
p_1(T)-p_1(0) &= \mathcal{O}(\varepsilon\vert {p}\vert^2) +
\Oscr\tond{ \varepsilon^{r+1}}=0\ ,\\ {p}(T)-{p}(0) &= -\int_0^T
\sum_{s=1}^r\nabla_q \quadr{f_0^{(r,s)}+f_2^{(r,s)}}\, ds +
\mathcal{O}(\varepsilon\vert {p}\vert^2) +
\Oscr\tond{\varepsilon^{r+1}}=0 \ ,
\end{aligned}
$$ where the unknown is the initial datum $(\hat q=\hat q(0),\hat
p=\hat p(0))$, namely the Cauchy problem. Due to the conservation of
the energy, we can eliminate the equation for $p_1$, divide the $n-1$
actions $p$ by $\eps$ and look at $q_{1}(0)$ as a parameter (the phase
along the orbit). The system of $2n-1$ equations in $2n-1$ unknowns
$(q(0),p_1(0), p(0))$
$$
\begin{aligned}
\hat{q}(T)-\hat{q}(0)-{\Lambda}T &=
\int_0^T \nabla_p\quadr{f_{4}^{(r,0)} + \sum_{s=1}^r f_2^{(r,s)}}\, ds
+ \mathcal{O}(\vert {p}\vert^2) + \mathcal{O}(\varepsilon\vert
{p}\vert) + \Oscr\tond{\varepsilon^{r+1}} = 0\ ,\\
\frac{{p}(T)-{p}(0)}{\eps} &= -\frac1\eps\int_0^T \sum_{s=1}^r\nabla_q
\quadr{f_0^{(r,s)}+f_2^{(r,s)}}\, ds + \mathcal{O}(\vert {p}\vert^2) +
\Oscr\tond{\varepsilon^{r}}=0\ ,
\end{aligned}
$$ takes the form \eqref{frm:Ups}. The approximate periodic solution
\begin{displaymath}
\hat{p}(t)=0\ ,\qquad q_1(t)=\omega t + q_{1}(0)\ ,\qquad {q}(t)={q}^*\ ,
\end{displaymath}
corresponds to (and actually represents) an approximate zero
$(q(0)=q^*,\hat p(0)=0)$ for the $\Upsilon$ map. The proof of
Theorem~\ref{teo:forma-normale-r} then simply consists in the
application of
\begin{proposition}[Newton-Kantorovich method]
\label{prop:N-K}
  Consider
$F\in\mathcal{C}^1\left(\mathcal{U}(x_0)\times\mathcal{U}(0),V\right)$.
Assume that there exist three constants $C_{1,2,3}>0$
dependent, for $\eps$ small enough, on $\Uscr(x_0)$ only, and two
parameters $0\leq 2\alpha<\beta$ such that
\begin{equation}
\label{frm:Newton-hp}
  \begin{aligned}
    \|F(x_0,\eps)\| &\leq C_1 |\eps|^{\beta}\ ,\\
    \|[F'(x_0,\eps)]^{-1}\|_{\Lscr(V)} &\leq C_2 |\eps|^{-\alpha}\ ,\\
    \norm{F'(z,\eps)-F'(x_0,\eps)}_{\Lscr(V)} &\leq C_3 \norm{z-x_0}\ .
  \end{aligned}
\end{equation}
Then there exist positive $C_0$ and $\eps^*$ such that, for
$|\eps|<\eps^*$, there exists a unique $x^*(\eps)\in\mathcal{U}(x_0)$
which fulfills
\begin{displaymath}
F(x^*,\eps)=0\ ,\qquad\qquad \|x^*-x_0\| \leq C_0|\eps|^{\beta-\alpha}\ .
\end{displaymath}
Furthermore, Newton's algorithm converges to $x^*$.
\end{proposition}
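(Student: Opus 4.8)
The plan is to read the three hypotheses of \eqref{frm:Newton-hp} as exactly the ingredients of a Newton--Kantorovich majorant scheme and to run the full Newton iteration
$$
x_{n+1} = x_n - [F'(x_n,\eps)]^{-1}F(x_n,\eps)\ ,\qquad n\geq0\ ,
$$
starting from $x_0$. First I would combine the first two bounds to control the length of the initial step, $\eta := \norm{[F'(x_0,\eps)]^{-1}F(x_0,\eps)} \leq C_1C_2\,|\eps|^{\beta-\alpha}$, and abbreviate $B := C_2|\eps|^{-\alpha}$ (bound on the inverse Jacobian at $x_0$) and $K := C_3$ (Lipschitz constant of $F'$). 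The single relevant smallness parameter is the Kantorovich quantity $h := BK\eta$, for which the hypotheses give $h \leq C_1C_2^2C_3\,|\eps|^{\beta-2\alpha}$. Since the assumption $2\alpha<\beta$ makes the exponent $\beta-2\alpha$ strictly positive, there is an $\eps^*>0$ with $h\leq 1/2$ for all $|\eps|<\eps^*$; this inequality is what converts the qualitative smallness of $\eps$ into the quantitative condition that drives the whole iteration.

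The core is a Neumann-series-plus-induction argument. I would first show that $F'$ remains invertible on any ball $\norm{z-x_0}\leq\delta$ with $BK\delta<1$: the third hypothesis gives $\norm{[F'(x_0,\eps)]^{-1}(F'(z,\eps)-F'(x_0,\eps))}\leq BK\delta<1$, so a Neumann series yields $\norm{[F'(z,\eps)]^{-1}}\leq B/(1-BK\delta)$, making the Newton map well defined along the orbit. Then, using the Newton equation $F(x_n,\eps)+F'(x_n,\eps)(x_{n+1}-x_n)=0$ together with the integral form of the Taylor remainder and the Lipschitz bound, I would derive the standard quadratic one-step estimate
$$
\norm{F(x_{n+1},\eps)} \leq \frac{K}{2}\norm{x_{n+1}-x_n}^2\ ,
$$
whence $\norm{x_{n+1}-x_n} \leq \tfrac12 BK(1-BK\delta)^{-1}\norm{x_n-x_{n-1}}^2$.

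An induction against the scalar majorant $p(t)=\tfrac{K}{2}t^2 - t/B + \eta/B$, whose Newton sequence $t_0=0$, $t_{n+1}=t_n-p(t_n)/p'(t_n)$ satisfies $t_1=\eta$ and increases to the smaller root $t^* = \frac{1-\sqrt{1-2h}}{h}\,\eta \leq 2\eta$, then shows simultaneously that all iterates stay in the ball $\norm{x-x_0}\leq t^*$ and that $\norm{x_{n+1}-x_n}\leq t_{n+1}-t_n$. Consequently $\{x_n\}$ is Cauchy and converges to a limit $x^*(\eps)$ with $F(x^*,\eps)=0$, and the distance estimate follows at once from $\norm{x^*-x_0}\leq t^*\leq 2\eta\leq 2C_1C_2\,|\eps|^{\beta-\alpha}$, so one may take $C_0=2C_1C_2$.

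For uniqueness inside the Kantorovich ball, if $y$ were a second zero I would write $0=F(x^*,\eps)-F(y,\eps)=A\,(x^*-y)$ with $A=\int_0^1 F'(y+t(x^*-y),\eps)\,dt$; the same Neumann estimate shows $\norm{[F'(x_0,\eps)]^{-1}(A-F'(x_0,\eps))}\leq BK\,t^*<1$, so $A$ is invertible and $x^*=y$. I expect the main technical obstacle to be the self-consistency of the induction: one must check at every step that the iterate remains in the ball where $F'$ is invertible (so the next step exists) while keeping the inverse-norm bound $B/(1-BK\delta)$ valid along the entire orbit. The condition $h\leq1/2$ is precisely what closes this loop, and producing it is where the hypothesis $2\alpha<\beta$ is indispensable.
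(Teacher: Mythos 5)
Your proposal follows the classical Kantorovich majorant route: you run the \emph{full} Newton iteration $x_{n+1}=x_n-[F'(x_n,\eps)]^{-1}F(x_n,\eps)$, control the inverses along the orbit by a Neumann series, and dominate the step sizes by the Newton sequence of the scalar polynomial $p(t)=\tfrac{K}{2}t^2-t/B+\eta/B$. The paper instead freezes the Jacobian at $x_0$ and applies the Contraction Principle to the simplified Newton map $A(x,\eps)=x-[F'(x_0,\eps)]^{-1}F(x,\eps)$ on a ball of radius $r=\eta t_-$, with the same smallness quantity $h=BK\eta=C_1C_2^2C_3|\eps|^{\beta-2\alpha}$ closing the argument in both cases (the paper asks $h<\tfrac14$, you ask $h\le\tfrac12$), and both yield $C_0=2C_1C_2$. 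Your route has the advantage of directly proving the final claim that \emph{Newton's} algorithm converges, which the frozen-Jacobian contraction only establishes for the chord iteration; the paper's route has the advantage of needing strictly weaker regularity, as explained next.

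There is one step in your argument that does not follow from the hypotheses as stated. The third line of \eqref{frm:Newton-hp} is only a \emph{center}-Lipschitz condition at $x_0$, i.e.\ $\norm{F'(z,\eps)-F'(x_0,\eps)}\le C_3\norm{z-x_0}$; it does not give $\norm{F'(u,\eps)-F'(v,\eps)}\le C_3\norm{u-v}$ for arbitrary $u,v$ in the ball. Your quadratic one-step estimate $\norm{F(x_{n+1},\eps)}\le\tfrac{K}{2}\norm{x_{n+1}-x_n}^2$ is obtained from the Taylor remainder $\int_0^1[F'(x_n+ts_n,\eps)-F'(x_n,\eps)]s_n\,dt$ and therefore needs the Lipschitz bound based at $x_n$, not at $x_0$; the same remark applies to the exact majorization by $p$. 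From the stated hypothesis the triangle inequality only gives $\norm{F'(x_n+ts_n,\eps)-F'(x_n,\eps)}\le C_3\bigl(2\norm{x_n-x_0}+t\norm{s_n}\bigr)$, which produces an extra linear term of size $O(h)\norm{s_n}$ in the one-step estimate. The scheme is repairable --- for $h$ small this still yields a (linearly) contracting sequence inside the ball $\norm{x-x_0}\le 2\eta$, and your uniqueness argument already uses only the center-Lipschitz bound --- but as written the quadratic recursion and the majorant polynomial overclaim relative to the hypotheses. The paper's frozen-Jacobian contraction is tailored precisely so that only the center-Lipschitz condition is ever invoked.
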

The proof of the Proposition is reported in Appendix~\ref{app:fixedpoint}.

Since we are seeking for a true periodic solution close to the
approximate one, we take $({q},\hat{p})$ in a small ball centered in
$({q^*},0)$; thus both the variables can be interpreted ``locally'' as
cartesian variables in $\RR^{2n-1}$. We have already introduced in
\eqref{frm:M(epsilon)} $M(\epsilon)$, the differential of the map
$\Upsilon$ evaluated in $({q},\hat{p})=({q}^*,0)$.  Expanding
$M(\epsilon)$ in powers of $\epsilon$ we get
$$ M(\eps) = M_0+\varepsilon M_1+\mathcal{O}(\varepsilon^2)=
\begin{pmatrix}
\varepsilon A_1+\mathcal{O}(\varepsilon^2) & C_0+\varepsilon
C_1+\mathcal{O}(\varepsilon^2) \\ B_0+\varepsilon
B_1+\mathcal{O}(\varepsilon^2) & D_0+\eps D_1+\mathcal{O}(\varepsilon^2) \\
\end{pmatrix}\ ,
$$
where
\begin{equation}
  \label{frm:B0-C0}
 B_{0;i,j}= -\quadr{\frac{\partial^2 f_0^{(r,1)}}{\partial {q_i}\partial
   {q_j}}\Big|_{q={q}_0^*}}\frac{T}{\eps}\ ,\qquad\qquad C_0 = C T\ ,
\end{equation}
and $C$ is the twist matrix defined in \eqref{frm:twist}. The first of
\eqref{frm:Newton-hp} is satisfied with $\beta=r$. The third of
\eqref{frm:Newton-hp} is satisfied because of the smoothness of the
flow at time $T$ w.r.t. the initial datum (it keeps the same
smoothness as its vector field). The core of the statement is then the
requirement on the invertibility of $M(\eps)$. If $B_0$ is invertible,
then the same holds for $M_0$ (being the twist $C_0$ invertible) which
is the leading order of $M$; hence $M(\eps)$ is also invertible and
the second of \eqref{frm:Newton-hp} is satisfied with $\alpha=0$,
being $M_0$ independent of $\eps$. This is actually Poincar\'e's
theorem. If instead $B_0$ has a non trivial Kernel, then the same
holds also for $M_0$, typically with a greater dimension. The required
invertibility of $M(\eps)$, asked by Theorem~\ref{teo:forma-normale-r}, is necessarily due to the
$\eps$-corrections, who are responsible for the bifurcations of the
zero eigenvalues of the matrix $M_0$. Hence, in order to fulfill the
second of \eqref{frm:Newton-hp}, we need the smallest eigenvalues of
$M(\eps)$ to bifurcate from zeros as $\lambda_j(\eps)\sim
\eps^\alpha$, with $\alpha<\frac{r}2$, which is indeed
\eqref{frm:stima-autovalori}. Finally, estimates \eqref{frm:estim.sol}
are of the same type as the one in Proposition~\ref{prop:N-K}, even
after back-transforming the solutions to the original canonical
variables with $\Phi^{(r)}$. Indeed, as illustrated in the detailed
proof of Proposition~\ref{pro:forma-normale-1}, the normalizing
transformation $\Phi^{(r)}$ is a near the identity transformation.


\section{One parameter families.}
\label{sec:applic}

Generically we expect that, apart from very pathological examples, two
normal form steps are enough to get a clear insight into the
degeneracy. In particular, with a second order approximation one can
investigate whether one-parameter families ${q}^*_0(s)$, which are
solutions of \eqref{frm:qstar.Poi}, are or not destroyed. In the first
case, the isolated solutions which survive to the breaking of the
family are natural candidates for the continuation, once
\eqref{frm:stima-autovalori} has been verified. In the second case, at
least a third step of normalization is necessary, unless there are
good reasons to believe that the whole family survives, due to the
effect of some hidden symmetry of the model.

What we are going to develop in the first part of this Section is
exactly the case when the first of \eqref{frm:B0-C0} admits
one-parameter families of solutions on the torus $\TT^{n-1}$, which
means that $\dim\tond{\Ker(B_0)}=1$. In this easier case (which
represents the weakest degeneracy for $B_0$), under suitable
conditions on the matrix $M_0$, it is possible to apply some results
of perturbation theory of matrices to $M(\eps)$ (see \cite{YakS75},
Chap. IV, par. 1.4) {in order to replace assumption
  \eqref{frm:stima-autovalori} with a more accessible
  criterion.} This allows to get a more applicable formulation of
Theorem~\ref{teo:forma-normale-r}, which will be used in the
forthcoming application.

\subsection{Some few facts on matrix perturbation theory}

The degeneration we are here considering implies that
$0\in \Spec(B_0)$, with the geometric multiplicity being equal
to one ($m_g(0,B_0)=1$). Let ${a}_1$ be the $(n-1)$-dimensional vector
generating $\Ker(B_0)$. Let us introduce also ${f_1}$ as the embedding
of ${a}_1$ into $\RR^{2n-1}$, namely the $(2n-1)$ vector
$$
{f_1}=\begin{pmatrix} {a}_1 \\ {0}
\end{pmatrix} \ .
$$
We have the following

{\begin{lemma}
\label{lem:M0.spec}
Assume that the kernel of $M_0$ is of dimension one and is generated by ${f_1}$, namely
$\Ker(M_0)=\Span({f_1})\,$. If the
following orthogonality condition is fulfilled
\begin{equation}
\label{frm:orto.cond}
\Bigl\langle{C_0^{-1}D_0^\top{a_1}, \begin{pmatrix} {a}_1 \\ {0}
\end{pmatrix}}\Bigr\rangle=0\ ,
\end{equation}
then the algebraic multiplicity of the zero eigenvalue is greater
than two ($m_a(0,M_0)\geq 2$).
\end{lemma}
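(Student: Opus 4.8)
The plan is to convert the statement $m_a(0,M_0)\geq 2$ into a solvability condition for a linear system and then recognize \eqref{frm:orto.cond} as precisely that condition. I start from the leading order block form already obtained in the proof of Theorem~\ref{teo:forma-normale-r},
$$
M_0=\begin{pmatrix} 0 & C_0\\ B_0 & D_0\end{pmatrix}\ ,
$$
where $C_0=CT$ is the twist block, which is symmetric and invertible by \eqref{frm:twist}, and $B_0$ is symmetric as well, being proportional to the Hessian $\partial^2_{q}f_0^{(r,1)}$. A direct check confirms $M_0 f_1=(0,B_0a_1)^\top=0$. Since by hypothesis $\Ker(M_0)=\Span(f_1)$ has dimension one, the eigenvalue $0$ is carried by a single Jordan block whose size equals $m_a(0,M_0)$; hence $m_a(0,M_0)\geq 2$ is equivalent to the existence of a generalized eigenvector, that is, to the solvability of $M_0 v=f_1$, i.e. to $f_1\in\Range(M_0)$.

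Next I would use the standard identity $\Range(M_0)=\Ker(M_0^\top)^\perp$, so that $f_1\in\Range(M_0)$ if and only if $f_1$ is orthogonal to $\Ker(M_0^\top)$. The latter kernel is computed explicitly: writing a candidate as $(u,v)$ and using the invertibility of $C_0$, the equation $M_0^\top(u,v)^\top=0$ forces $B_0^\top v=0$ together with $u=-C_0^{-1}D_0^\top v$. Because $B_0$ is symmetric, $\Ker(B_0^\top)=\Ker(B_0)=\Span(a_1)$, so $\dim\Ker(M_0^\top)=1$ (as it must, being equal to $\dim\Ker(M_0)$) and $\Ker(M_0^\top)$ is generated by
$$
w_1=\begin{pmatrix} -C_0^{-1}D_0^\top a_1\\ a_1\end{pmatrix}\ ,
$$
where I have also invoked the symmetry of $C_0$ to drop the transpose in $(C_0^\top)^{-1}=C_0^{-1}$.

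Finally I would impose $\prsca{f_1}{w_1}=0$. As $f_1=(a_1,0)^\top$ vanishes on the last $n$ entries, it pairs only with the first block of $w_1$, and a short computation gives $\prsca{f_1}{w_1}=-\prsca{C_0^{-1}D_0^\top a_1}{(a_1,0)}$, the right-hand bracket now being taken in $\reali^{n}$. This quantity vanishes exactly when \eqref{frm:orto.cond} holds; therefore \eqref{frm:orto.cond} guarantees $f_1\in\Range(M_0)$, hence the existence of a Jordan chain of length at least two, and thus $m_a(0,M_0)\geq 2$.

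The delicate points, and the main (though modest) obstacle, are purely bookkeeping: the domain and codomain of $M_0$ carry different block splittings, namely $(n-1,n)$ for the variables $(q,\hat p)$ against $(n,n-1)$ for the output $(F,G)$, so one must keep careful track of indices when transposing $M_0$ and when aligning the two mismatched first blocks in $\prsca{f_1}{w_1}$. The symmetry of $B_0$ and $C_0$, both Hessian-type matrices, is exactly what removes the stray transposes and makes the computed condition coincide with \eqref{frm:orto.cond}; I would therefore record these symmetries explicitly before carrying out the computation.
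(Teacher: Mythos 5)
Your argument is correct and follows essentially the same route as the paper: compute $\Ker(M_0^\top)$ explicitly, identify \eqref{frm:orto.cond} with $\prsca{f_1}{g}=0$, and invoke the Fredholm alternative to solve $M_0 f_2 = f_1$, which yields a Jordan chain of length at least two. Your explicit use of the symmetry of $B_0$ and $C_0$ and the remark on the mismatched block splittings only make precise steps the paper leaves implicit.
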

}

\proof
In order to study the $\Ker(M_0)$, we have to solve
$$ 
\begin{pmatrix}
O & C_0 \\ B_0 & D_0 \\
\end{pmatrix} 
\begin{pmatrix}
{x} \\- {y} \\
\end{pmatrix} 
=
\begin{pmatrix}
C_0{y} \\ B_0{x} -D_0{y}\\
\end{pmatrix} 
=
\begin{pmatrix}
{0} \\ {0} \\
\end{pmatrix} 
$$ which gives, due to the invertibility of $C_0$, ${y}={0}$, and thus
${x}\in\text{Ker}(B_0)$.  This provides the first claim. The statement
concerning the algebraic multiplicity can be derived investigating the
Kernel of the adjoint matrix $M_0^\top$. It is easy to see that
$$ \text{Ker}(M_0^\top)= \Span\tond{g} \qquad\qquad {g}=
\begin{pmatrix}-C_0^{-1}D_0^\top{a_1} \\ {a}_1 \\
\end{pmatrix}
$$ {and deduce that the assumption \eqref{frm:orto.cond} is equivalent
  to $\inter{{f_1},{g}}=0$, where the right hand vector in
  \eqref{frm:orto.cond} is the $n$-dimensional vector built by
  complementing ${a_1}$ with one $0$}. The last, according to Lemma
III, Chapter 1.16 of \cite{YakS75}, is not compatible with
$m_a(0,M_0)=1$. Precisely, we can observe that the orthogonality
condition between the two vectors allows to find a second generalized
eigenvector ${f_2}$ for $\Ker(M_0)$, as a solution of $M_0
{f_2}={f_1}$. Indeed, the Fredholm alternative theorem guarantees the
existence of ${f_2}$ under exactly the condition
$\inter{{f_1},{g}}=0$. \qed

In order to determine the asymptotic behaviour of the eigenvalues
$\lambda(\eps)\in\text{spec}(M(\eps))$, we make us of the fact that
$dim(\text{Ker}(M_0))=1$ and that the following Lemma holds (refer
to \cite{YakS75}, Cap.IV, \S 1. for all the details)

\begin{lemma} 
\label{lem:stima-biforcazione}
Let $\lambda_0$ an eigenvalue $M_0$ with $m_g(\lambda_0,M_0)=1$ and
$m_a(\lambda_0,M_0)=h\geq 2$ and let ${f}_1,\ldots,{f}_h$ the
generalized eigenvectors relative to $\lambda_0$, defined by the
recursive scheme
$$ M_0 {f}_1=\lambda_0 {f}_1, \quad M_0 {f}_2=\lambda_0
{f}_2 +{f}_1, \ldots, M_0 {f}_h=\lambda_0 {f}_h
+{f}_{h-1}.
$$ Moreover, let ${g_1},\ldots,{g_h}$ the generalized
eigenvectors for $M_0^\top$ relative to
$\lambda_0$, such that
$$ \langle {f}_j, {g}_i \rangle=\delta_{ji}, \quad \text{con}
\quad j,i=1,\ldots,h
$$ and define 
\begin{equation*}
\gamma=\langle M_1 {f}_1,{g}_h \rangle\ .
\end{equation*}
If $\gamma\neq
0$, then the $h$ solutions $\lambda_j(\varepsilon)$ of the characteristic equation
$$ \det(M(\eps)-\lambda I)=0
$$ are given by
$$ \lambda_j(\varepsilon)=\lambda_0-(\varepsilon \gamma)^{1/h}_j
+\mathcal{O}(\varepsilon^{2/h})\ ,
$$ where $(\varepsilon \gamma)^{1/h}_j$ are the $h$ distinct roots of
$\sqrt[h]{\varepsilon\gamma}$.
\end{lemma}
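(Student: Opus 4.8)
The plan is to reduce the full $(2n-1)\times(2n-1)$ eigenvalue problem for $M(\eps)$ to a scalar asymptotic equation governing the $h$ eigenvalues that bifurcate out of $\lambda_0$. First I would set up the Jordan structure at $\eps=0$: by hypothesis $\lambda_0$ has geometric multiplicity one and algebraic multiplicity $h$, so there is a single Jordan block of size $h$, with the generalized eigenvectors $f_1,\ldots,f_h$ of $M_0$ and the biorthogonal chain $g_1,\ldots,g_h$ of $M_0^\top$ satisfying $\langle f_j,g_i\rangle=\delta_{ji}$. The remaining $2n-1-h$ eigenvalues of $M_0$ stay at an $\Oscr(1)$ distance from $\lambda_0$, so for $\eps$ small only the $h$-dimensional generalized eigenspace is relevant; I would project onto it and treat the rest perturbatively (a Lyapunov–Schmidt / Riesz-projection reduction). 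This is the step where I would simply cite \cite{YakS75}, Chap.~IV, \S1, for the abstract machinery rather than redo it.

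Next I would seek eigenvalues of $M(\eps)=M_0+\eps M_1+\Oscr(\eps^2)$ of the form $\lambda(\eps)=\lambda_0+\mu(\eps)$ with $\mu\to0$, writing $(M_0+\eps M_1-\lambda I)v=0$ with $v$ expanded in the generalized eigenbasis. Using the Jordan relations $M_0 f_1=\lambda_0 f_1$ and $M_0 f_{j}=\lambda_0 f_{j}+f_{j-1}$, one finds that the nilpotent part shifts indices by one, so a naive power series in $\eps$ fails: the balance forces a Newton–Puiseux scaling $\mu\sim\eps^{1/h}$. Substituting $\mu=\tau\eps^{1/h}$ and tracking the dominant terms, the solvability condition at leading order pairs the perturbation $M_1$ acting on the bottom of the chain, $f_1$, against the top covector $g_h$, producing the scalar relation $\tau^{h}=-\gamma$ with $\gamma=\langle M_1 f_1,g_h\rangle$. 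The hypothesis $\gamma\neq0$ guarantees $h$ distinct simple roots $\tau_j=(-\gamma)^{1/h}_j$, hence $\lambda_j(\eps)=\lambda_0-(\eps\gamma)^{1/h}_j+\Oscr(\eps^{2/h})$ as claimed.

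The main obstacle is making the Puiseux expansion rigorous rather than merely formal: one must show that the characteristic polynomial $\det(M(\eps)-\lambda I)$, restricted to the bifurcating block, has the Newton polygon whose single lowest edge has slope $1/h$, so that all $h$ small roots genuinely scale like $\eps^{1/h}$ with the stated leading coefficient, and that the contributions from the transverse $(2n-1-h)$ eigenvalues and from the $\Oscr(\eps^2)$ remainder of $M(\eps)$ only affect the $\Oscr(\eps^{2/h})$ correction. The cleanest route is to invoke the standard theory of analytic perturbation of an isolated eigenvalue with a single Jordan block (the formula $\tau^h=-\langle M_1 f_1,g_h\rangle$ is exactly the first Puiseux coefficient in that theory), so I would phrase the argument as a verification that the hypotheses of the cited result in \cite{YakS75} hold here, namely that $M_0$ has the prescribed single Jordan block at $\lambda_0$ and that the genericity coefficient $\gamma$ is nonzero, and then read off the asymptotics directly.
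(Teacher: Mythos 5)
Your plan is correct, but note that the paper does not actually prove this lemma: it is quoted as a known result of matrix perturbation theory, with the reader referred to \cite{YakS75}, Chap.~IV, \S 1, for all details. What you have written is essentially the standard argument that lies behind that citation: the hypotheses $m_g(\lambda_0,M_0)=1$ and $m_a(\lambda_0,M_0)=h$ force a single Jordan block of size $h$; the Riesz projection isolates that block from the rest of the spectrum, which stays at distance $\Oscr(1)$ from $\lambda_0$; on the block the nilpotent shift makes an integer power series in $\eps$ inconsistent and forces the Puiseux scaling $\mu=\tau\eps^{1/h}$; and the determinant of the projected pencil, $\det(N-\mu I+\eps B)$ with $B_{ij}=\langle M_1 f_j,g_i\rangle$, has a Newton polygon with a single lowest edge of slope $1/h$ joining the monomials $\mu^h$ and $\eps B_{h,1}=\eps\langle M_1 f_1,g_h\rangle$, whence $\tau^h=\pm\gamma$ and $h$ distinct simple roots when $\gamma\neq0$ (the sign in front of $(\eps\gamma)^{1/h}_j$ is a harmless convention, since one enumerates all $h$ roots and only the modulus $|\eps\gamma|^{1/h}$ is used later to bound the eigenvalues from below). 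Your approach therefore buys an explicit, self-contained derivation where the paper settles for a citation; the only part you leave at the level of a plan --- verifying that the transverse $(2n-1-h)$-dimensional spectral block and the $\Oscr(\eps^2)$ remainder of $M(\eps)$ contribute only at order $\eps^{2/h}$ --- is precisely the part you propose to delegate to \cite{YakS75}, which is what the authors do for the whole statement.
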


\subsection{The special case of $m_a(0,M_0)=2$.}

We are interested in the bifurcations of the zero
eigenvalue (needed to bound the inverse matrix $M^{-1}(\eps)$), thus
in the previous Lamma \ref{lem:stima-biforcazione} we can take
$\lambda_0=0$ and ${f}_1$ as the eigenvector generating
$\text{Ker}(M_0)$.  Moreover, since
$$
\begin{pmatrix}
A_1 & C_1 \\ B_1 & D_1 \\
\end{pmatrix} 
\begin{pmatrix}
{a}_1 \\ {0} \\
\end{pmatrix} 
=
\begin{pmatrix}
A_1{a}_1 \\ B_1{a}_1 \\
\end{pmatrix} 
$$ the value of $\gamma$ does not depend on the whole matrix $M_1$,
but only on the blocks $A_1$ and $B_1$. The problem is further
simplified when $m_a(0,M_0)=2$: in this case ${g}_2$ coincides with
${g}$ and $\gamma$ reduces to
$$ \gamma=\langle M_1{f}_1,{g}_2 \rangle= 
\Bigl\langle{\begin{pmatrix}
A_1{a}_1 & B_1{a}_1 \\
\end{pmatrix} 
, \begin{pmatrix} -C_0^{-1} D_0^\top {a_1} \\ {a}_1 \\
\end{pmatrix}}\Bigr\rangle 
= \Bigl\langle \tond{B_1 - D_0 C_0^{-1}A_1}{a}_1,{a}_1 \Bigr\rangle\ .
$$ Thus, under the easier condition
\begin{displaymath}
  \gamma = \langle \tond{B_1 - D_0 C_0^{-1}A_1}{a}_1,{a}_1 \rangle\not=0\ .
\end{displaymath}

Theorem~\ref{teo:forma-normale-r} can be formulated as

\begin{theorem}
  \label{teo:forma-normale-2}
  Consider $\Upsilon=\tond{{F},{G}}$ defined by \eqref{frm:Ups}
  in a neighbourhood of $({0},{q^*})$, with ${q^*}(\eps)$ defined by
  \eqref{frm:qstar} and $r=2$. Let $\dim(\Ker(B_0))=1$, being ${a}_1$
  its generator. Assume also that $m_a(0,M_0)=2$ and that it holds
  \begin{equation}
    \label{frm:stima-autovalori-bis}
  \inter{\tond{B_1 - D_0 C_0^{-1}A_1}{a}_1,{a}_1}\not=0\ .
  \end{equation}
  Then, there exist positive constants $C_0$ and $\eps^*$ such that, for 
  $|\eps|<\eps^*$ there exists a point
  $({q}_{\rm p.o.}(\eps),\hat{p}_{\rm p.o.}(\eps))\in\Uscr\times\TT^{n-1}$ which
  solves
  \begin{displaymath}
    \Upsilon({q}_{\rm p.o.},\hat{p}_{\rm p.o.};\eps,q_1(0))= 0\ ,\qquad\qquad
    \norm{({q}_{\rm p.o.},\hat{p}_{\rm p.o.})-({q^*},0)}\leq C_0\eps^{3/2}\ .
  \end{displaymath}
\end{theorem}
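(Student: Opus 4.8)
I read this as Theorem~\ref{teo:forma-normale-r} specialised to $r=2$, the only task being to replace the abstract spectral hypothesis~\eqref{frm:stima-autovalori} by the explicit, checkable condition~\eqref{frm:stima-autovalori-bis}; the conclusion (existence, uniqueness and the displacement bound) then follows verbatim. The plan is therefore to verify the three hypotheses of the Newton--Kantorovich Proposition~\ref{prop:N-K} — the engine behind Theorem~\ref{teo:forma-normale-r} — with $\beta=2$ and $\alpha=\tfrac12$, and then to read off the displacement $\eps^{\beta-\alpha}=\eps^{3/2}$. Two of the three hypotheses require no new work: the approximate-zero bound $\norm{\Upsilon(q^*,0;\eps,q_1(0))}\le C_1\eps^2$ holds because $q^*(\eps)$ is chosen to satisfy~\eqref{frm:qstar} with $r=2$, so that $(\hat p=0,q=q^*)$ is a relative equilibrium of the order-two truncated normal form and the whole map $\Upsilon$ is $\Oscr(\eps^{2})$, exactly as noted below~\eqref{frm:Ups}; and the Lipschitz bound on $\Upsilon'$ is inherited from the smoothness of the time-$T$ flow in the initial datum. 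Everything then reduces to the invertibility of $M(\eps)$ together with the inverse bound $\norm{M(\eps)^{-1}}\le C_2\eps^{-1/2}$, i.e.\ to the spectral condition~\eqref{frm:stima-autovalori} with $\alpha=\tfrac12$.

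For the spectral estimate I would invoke the matrix-perturbation machinery already assembled. By hypothesis $0\in\Spec(M_0)$ has geometric multiplicity one and algebraic multiplicity $m_a(0,M_0)=2$, hence a single Jordan block of size two; every other eigenvalue of $M_0$ is nonzero and, by continuity of the spectrum, stays uniformly bounded away from the origin for $\eps$ small. Thus the only eigenvalues of $M(\eps)$ that can approach zero are the two springing from this block, and these are governed by Lemma~\ref{lem:stima-biforcazione} with $\lambda_0=0$ and $h=2$. The relevant coefficient is $\gamma=\inter{M_1 f_1,g_2}$; inserting $M_1 f_1=\tond{A_1 a_1,\,B_1 a_1}$, the expression for $g_2=g$ from Lemma~\ref{lem:M0.spec} and the symmetry of the twist block $C_0$, it collapses — as shown just before the statement — to $\gamma=\inter{\tond{B_1-D_0 C_0^{-1}A_1}a_1,a_1}$. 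The hypothesis~\eqref{frm:stima-autovalori-bis} is precisely $\gamma\neq0$, so Lemma~\ref{lem:stima-biforcazione} yields the two bifurcating eigenvalues as $\lambda_{1,2}(\eps)=-(\eps\gamma)^{1/2}_{1,2}+\Oscr(\eps)$, whence $|\lambda_{1,2}(\eps)|=|\gamma|^{1/2}\eps^{1/2}\tond{1+o(1)}$.

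Combining the two cases, for $\eps$ small enough $M(\eps)$ is invertible with $\norm{M(\eps)^{-1}}\le C_2\eps^{-1/2}$, the constant $|\gamma|^{-1/2}$ being absorbed in $C_2$; this is the second of~\eqref{frm:Newton-hp} with $\alpha=\tfrac12$. Since $2\alpha=1<2=\beta$, Proposition~\ref{prop:N-K} applies and produces a unique $(q_{\rm p.o.}(\eps),\hat p_{\rm p.o.}(\eps))$ solving $\Upsilon=0$ with $\norm{(q_{\rm p.o.},\hat p_{\rm p.o.})-(q^*,0)}\le C_0\eps^{\beta-\alpha}=C_0\eps^{3/2}$, which is the claim. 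The step deserving most care is the spectral bookkeeping of the previous paragraph: one must confirm that the non-bifurcating part of the spectrum genuinely stays away from zero, so that the single exponent $\eps^{-1/2}$ controls the whole inverse and not merely its action on the two-dimensional bifurcating subspace. This is exactly where the hypothesis $m_a(0,M_0)=2$ (rather than $>2$) enters — it forces a single size-two Jordan block, hence the clean splitting rate $1/h=\tfrac12$, which is what makes $2\alpha<\beta$ hold and delivers the exponent $\tfrac32$.
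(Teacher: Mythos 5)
Your proposal follows essentially the same route as the paper: the authors also obtain Theorem~\ref{teo:forma-normale-2} by specialising Theorem~\ref{teo:forma-normale-r} to $r=2$, using Lemma~\ref{lem:stima-biforcazione} with $\lambda_0=0$ and $h=2$ to convert the hypothesis $\gamma=\inter{\tond{B_1-D_0C_0^{-1}A_1}a_1,a_1}\neq0$ into the eigenvalue bound~\eqref{frm:stima-autovalori} with $\alpha=\tfrac12$, and then invoking the Newton--Kantorovich Proposition~\ref{prop:N-K} with $\beta=2$ to get the exponent $\tfrac32$. Your explicit bookkeeping of the non-bifurcating part of the spectrum and of the remaining two hypotheses of~\eqref{frm:Newton-hp} is consistent with, and slightly more detailed than, what the paper writes.
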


In order to verify condition~\eqref{frm:stima-autovalori-bis}, the
block matrices $A_1$ and $B_1$ are needed; as a consequence, the first
order corrections to the generic Cauchy problem, $\hat{q}^{(1)}(t)$
and $\hat{p}^{(1)}(t)$ have to be derived. With a standard approach,
as the one performed in \cite{MelS05}, and after expanding in $\eps$
both the period map $\Upsilon$ and the solution ${q^*}(\eps)=q_0 +
\Oscr(\eps)$ one gets {\begin{equation}
  \begin{aligned} 
A_1 &= -\frac{T^2}2 C_0 D_q\nabla_{\hat
  q}f^{(2,1)}_0({q^*_0})+TD_q\nabla_{\hat p} f^{(2,1)}_2({q^*_0})
\\ B_1 &= -T D^3_{{q}} f^{(2,1)}_0({q^*_0}){q^*_1}-T
D^2_{{q}} f^{(2,2)}_0({q^*_0}) \\ &\qquad+
\frac{T^2}2\left[D^2_{{qp}} f^{(2,1)}_2({q^*_0})
  D^2_{{q}} f^{(2,1)}_0({q^*_0}) - D^2_{{q}} f^{(2,1)}_0({q^*_0})
  D^2_{{qp}} f^{(2,1)}_2({q^*_0})\right] \nonumber\\ &\qquad+ \frac{T^3}6
\left[D^2_{{q}} f^{(2,1)}_0({q^*_0})\right]^2\nonumber\ .
\end{aligned}
\end{equation}
Despite the formulation of Theorem~\ref{teo:forma-normale-2} is
simplified with respect to the abstract result stated in Theorem~\ref{teo:forma-normale-r}, it is evident from the above formulas that
it can be a hard task to verify condition
\eqref{frm:stima-autovalori-bis}. However, if the original Hamiltonian
is even in the angle variables, as often happens in models of weakly
interacting anharmonic oscillators, the condition
\eqref{frm:stima-autovalori-bis} can be further simplified if the
solutions to be investigated are the in/out-of-phase solutions
$q^*=0,\pi$, as shown in the following example.}

\subsection{Example: square dNLS cell with nearest neighbour interaction}

Let us consider the Hamiltonian system in real coordinates
$$ H = H_{0}+\varepsilon H_{1} = \sum_{j=1}^{4}\Biggl( \frac{x_j^2 +
  y_j^2}{2} + \Biggl( \frac{x_j^2 + y_j^2}{2}\Biggr)^2 +\varepsilon
(x_{j+1}x_j+ y_{j+1}y_j)\Biggr)\ ,
$$ which, introducing the action-angle variables $(x_j,y_j) =
(\sqrt{2I_j}\cos\varphi_j,\sqrt{2I_j}\sin\varphi_j)$, reads
$$ H=\sum_{j=1}^{4}\left(I_j+I_j^2+
2\varepsilon\sqrt{I_{j+1}I_j}\cos(\varphi_{j+1}-\varphi_j)\right)\ .
$$ Let us now fix the fully resonant torus
${I}^*=(I^*,I^*,I^*,I^*)$ and make a Taylor expansion around
${I}^*$.  The unperturbed part, $H_0$, reads
\begin{displaymath}
  H_0({I}) = 4I^* +4(I^*)^2 + (1+2I^*)(J_1+ J_2+J_3+J_4) +
  J_1^2+J_2^2+J_3^2+J_4^2\ ,
\end{displaymath}
while the perturbation $H_1$ takes the form
$$
\begin{aligned}
H_1({I},{\varphi}) &=  2I^*(\cos(\varphi_2
-\varphi_1) + \cos(\varphi_3 -\varphi_2) +\cos(\varphi_4
-\varphi_3)+\cos(\varphi_4 -\varphi_1)) \\ & \qquad +
(J_1+J_2)\cos(\varphi_2 -\varphi_1) + (J_3+J_2)\cos(\varphi_3
-\varphi_2) \\ & \qquad + (J_4+J_3)\cos(\varphi_4 -\varphi_3) +
(J_1+J_4)\cos(\varphi_4 -\varphi_1)  + \mathcal{O}(\vert
{J}\vert^2)\ .
\end{aligned}
$$
We introduce\footnote{In this case, we have preferred the angles
    to be the relative phase differences among consecutives angles,
    rather than the phase differences with respect to the first angle
    $\vphi_1$.}  the resonant angles $\hat q=(q_1,q)$ and
their conjugate actions $\hat p=(p_1,p)$
$$ \left\lbrace \begin{aligned} & q_1 = \varphi_1 \\ & q_2 = \varphi_2
  - \varphi_1 \\ & q_3 = \varphi_3 - \varphi_2 \\ & q_4 = \varphi_4 -
  \varphi_3
\end{aligned}
\right. , \qquad \left\lbrace \begin{aligned} & p_1 = J_1 + J_2 + J_3
  + J_4 \\ & p_2 = J_2 + J_3 + J_4\\ & p_3= J_3 + J_4 \\ & p_4 = J_4
\end{aligned}
\right. \ .
$$
Thus, ignoring the constant terms, we can rewrite $H$ as
$$
\begin{aligned}
H &= \omega p_1 + \Bigl((p_1 -p_2)^2 + (p_2 -p_3)^2 + (p_3 -p_4)^2
+p_4^2\Bigr)+ \\ &\qquad\quad+ \varepsilon \Bigl(2I^* \cos(q_2)+ 2I^* \cos(q_3)+
  2I^* \cos(q_4)+ 2I^* \cos(q_2+q_3+q_4)\Bigr) \\ &\qquad\quad +
  (p_1-p_3)\cos(q_2) + (p_2-p_4)\cos(q_3) +p_3\cos(q_4) \\ & \qquad
  \quad + (p_1 -p_2 +p_4)\cos(q_2+q_3+q_4)+
\mathcal{O}(\varepsilon\vert \hat{p}\vert^2) \\ & = \omega p_1 +
f_4^{(0,0)}(p_1,p_2,p_3,p_4) + f_0^{(0,1)}(q_2,q_3,q_4) \\ &\qquad\quad +
f_2^{(0,1)}(p_1,p_2,p_3,p_4,q_2,q_3,q_4) +
\mathcal{O}(\varepsilon\vert \hat{p}\vert^2)\ ,
\end{aligned} 
$$ where $\omega=1+2I^*$.

\begin{remark}
With the usual canonical complex coordinates $\psi_j =
\frac1{\sqrt2}\tond{x_j+\im y_j}$, the Hamiltonian reveals to be a
dNLS model, with periodic boundary conditions
\begin{equation}
  \label{frm:H-dNLS}
H = \sum_{j=1}^4\quadr{|\psi_j|^2 + |\psi_j|^4 +
  \eps\tond{\psi_{j+1}\overline\psi_j+c.c.}}\ ,\qquad \psi_0=\psi_4\ .
\end{equation}
In agreement with this, we observe that the Hamiltonian does not
depend on the fast angle $q_1$. This is due to the effect of the Gauge
symmetry of the model, as visible in the complex form
\eqref{frm:H-dNLS}.  As a consequence, $f_0^{(0,1)}(q_2,q_3,q_4)$ is
already in normal form and the first stage only consists in the
translation of the actions, which allows to keep fixed $\omega$.
\end{remark}

Since $f_2^{(0,1)}$ is automatically averaged w.r.t. $q_1$, the
homological equation defining $\zeta^{(1)}$ is equivalent to the
following linear system
$$ \inter{{\nabla_{\hat{p}} f_4^{(0,0)}}, 
{\zeta}^{(1)}} + f_{2}^{(0,1)} \Bigr|_{{q}={q}^*}= 0\ ,
$$ whose solution is given by
$$
\left\lbrace \begin{aligned} & \zeta_1^{(1)}=-\quadr{\cos(q_2^*) +
  \cos(q_3^*) + \cos(q_4^*) + \cos(q_2^* + q_3^* + q_4^*)} \\ &
  \zeta_2^{(1)}=-\quadr{\frac{\cos(q_2^*)}{2} + \cos(q_3^*) + \cos(q_4^*) +
  \frac{\cos(q_2^* + q_3^* + q_4^*)}{2}} \\ & \zeta_3^{(1)}=
  -\quadr{\frac{\cos(q_3^*)}{2} + \cos(q_4^*) +
  \frac{\cos(q_2^*+q_3^*+q_4^*)}{2}} \\ & \zeta_4^{(1)}=-\quadr{
  \frac{\cos(q_4^*)}{2} + \frac{\cos(q_2^*+q_3^*+q_4^*)}{2}}
\end{aligned} \right. \ .
$$
Since the normal form preserves the symmetry, the newly generated
term $f_2^{(\text{I};0,1)}$ is again independent of $q_1$ and no
further average is required. The values ${q}^*$, which define the
approximate periodic orbit at leading order, are given by the
solutions of the trigonometric system (depending only on sines, due to
the parity of the Hamiltonian)
\begin{equation*}
  \left\lbrace \begin{aligned} & -2I^*\sin(q_2)-2I^*\sin(q_2+q_3+q_4)
  = 0 \\ & -2I^*\sin(q_3)-2I^*\sin(q_2+q_3+q_4) = 0 \\ &
  -2I^*\sin(q_4)-2I^*\sin(q_2+q_3+q_4) = 0
\end{aligned}
\right. \ .
\end{equation*}
Such solutions are given by the two isolated configurations $(0,0,0)$,
$(\pi,\pi,\pi)$, and the three one-parameter families
$Q_1=(\vartheta,\vartheta,\pi-\vartheta)$,
$Q_2=(\vartheta,\pi-\vartheta,\vartheta)$,
$Q_3=(\vartheta,\pi-\vartheta,\pi-\vartheta)$, with $\theta\in S^1$,
which all intersect in the two opposite configurations
$\pm(\frac{\pi}{2},\frac{\pi}{2},\frac{\pi}{2})$.  Since the twist
condition \eqref{frm:twist} is verified, we only need \eqref{frm:qstar.Poi}
in order to apply the implicit function theorem (which reduces to the
classical result of Poincar\'e).  Factoring out $-2I^*$, the
non-degeneracy condition reads
$$ \left| \begin{pmatrix} \cos(q_2^*)+\cos(q_2^*
  +q_3^*+q_4^*) & \cos(q_2^* +q_3^*+q_4^*) & \cos(q_2^* +q_3^*+q_4^*)
  \\ \cos(q_2^* +q_3^*+q_4^*) & \cos(q_3^*)+\cos(q_2^* +q_3^*+q_4^*) &
  \cos(q_2^* +q_3^*+q_4^*) \\ \cos(q_2^* +q_3^*+q_4^*) & \cos(q_2^*
  +q_3^*+q_4^*) & \cos(q_4^*)+\cos(q_2^* +q_3^*+q_4^*) \\
\end{pmatrix}\right|\neq 0.
$$ If we evaluate the determinant in the two isolated configurations,
we get $\det(B_0)=\pm 4T\neq 0$, hence the corresponding solutions can
be continued for small enough $\eps$. In the three families we
obviously get a degeneration, since the tangent direction to each
family represents a Kernel direction, hence $\det
\bigl({B_0\big|_{Q_j}}\bigr)=0$. Furthermore in the intersections
$\pm(\frac{\pi}{2},\frac{\pi}{2},\frac{\pi}{2})$ the matrices are
identically zeros. For all these families a second normalization step
is thus needed.

The first stage of the second normalization step deals with
$$ f_0^{(1,2)}= f^{(\text{I};0,2)}_{0} =
L_{\langle{\zeta}^{(1)},\hat{q}\rangle} f^{(0,1)}_{2} +
\frac{1}{2}L_{\langle{\zeta}^{(1)},\hat{q}\rangle}^2
f^{(0,0)}_{4}\ ,
$$ which is already averaged over $q_1$, due to the preservation of
the symmetry. The same holds also for the linear term in the action
variables $f_2^{(1,2)}$, given by
$$f_2^{(1,2)}= f^{(\text{I};0,2)}_{2} = 
L_{\langle{\zeta}^{(1)},\hat{q}\rangle} f^{(0,1)}_{4}\ .
$$
Hence, the homological equation providing the new translation
$\zeta^{(2)}$ reads
$$ L_{\langle{\zeta}^{(2)},\hat{q}\rangle} f_{4}^{(0,0)} +
L_{\langle{\zeta}^{(1)},\hat{q}\rangle} f_{4}^{(0,1)}
\Bigr|_{{q}={q}^*} = 0\ .
$$
The new linear term in the action
$$
f_{2}^{(\text{I};1,2)} = 
L_{\langle{\zeta}^{(1)},\hat{q}\rangle} f_{4}^{(0,1)} +
L_{\langle{\zeta}^{(2)},\hat{q}\rangle} f_{4}^{(0,0)}\ ,
$$ is again already averaged over $q_1$, hence the second step is
concluded, and the transformed Hamiltonian reads
$$
\begin{aligned}
H^{(2)} &= \omega p_1 + f_4^{(0,2)}(\hat{p})
\\ & \quad + {f}_0^{(1,2)}({q}) +
{f}_2^{(1,2)}(\hat{p},{q}) \\ & \quad
+{f}_0^{(2,2)}({q}) +
{f}_2^{(2,2)}(\hat{p},{q}) \\ & \quad
+\Oscr(\varepsilon\vert \hat{p}\vert^2) +
\Oscr\tond{\eps^3}\ .
\end{aligned}
$$ The approximate periodic orbit corresponds to the
${q}^*$ for which
\begin{equation*}
\nabla_q \quadr{{f}_0^{(1,2)}({q}) +{f}_0^{(2,2)}({q})} = \nabla_q
      {f}_0^{(1,2)}({q}) + \nabla_q\Bigl\langle{\nabla_{\hat{p}}f_2^{(0,1)}(q),\,
        {\zeta}^{(1)}}\Bigr\rangle = 0\ ,
\end{equation*}
where in the correction due to $f_0^{(2,2)}$, only the term
$L_{\langle{\zeta}^{(1)},\hat{q}\rangle} f^{(0,1)}_{2}$ really
matters, having a non trivial dependence on the slow angles $q$. By
exploiting the explicit expression for ${\zeta_1}$ previously
derived,
and replacing ${q}^*$ with ${q}$ in it, we explicitly get the system
$$ \left\lbrace \begin{aligned} -8\left( \sin(q_2)+\sin(q_2+q_3+q_4)
  \right) &+ \varepsilon \bigg[ 2\sin(2q_2) + \sin(q_2-q_3)
    +2\sin(q_2+q_3) \\ & \qquad + 2\sin(2q_2+ 2q_3+ 2q_4) +2\sin(2q_2+
    q_3+q_4) \\ & \qquad +\sin(q_2+ q_3+2q_4) \bigg]=0 \\ -8\left(
  \sin(q_3)+\sin(q_2+q_3+q_4) \right) &+ \varepsilon \bigg[
    2\sin(2q_3) + \sin(q_3-q_2) +2\sin(q_2+q_3) \\ & \qquad
    +\sin(q_3-q_4) + 2\sin(q_3+q_4) \\ & \qquad + 2\sin(2q_2+ 2q_3+
    2q_4) +\sin(2q_2+ q_3+q_4) \\ & \qquad +\sin(q_2+ q_3+2q_4)
    \bigg]=0 \\ -8\left( \sin(q_4)+\sin(q_2+q_3+q_4) \right) &+
  \varepsilon \bigg[ 2\sin(2q_4) + \sin(q_4-q_3) +2\sin(q_3+q_4) \\ &
    \qquad + 2\sin(2q_2+ 2q_3+ 2q_4) +\sin(2q_2+ q_3+q_4) \\ & \qquad
    +2\sin(q_2+ q_3+2q_4) \bigg]=0 \\
\end{aligned}
\right. \ ,
$$ depending on the effective small parameter
$\tilde{\varepsilon}=\frac{\varepsilon}{I^*}$.  The above system has
the  structure
\begin{equation}
  \label{frm:F(q,epsilon)}
  F(q,\varepsilon)= F_0 (q)+ \varepsilon F_1 (q)=0\ ,
\end{equation}
where $F:\mathbb{T}^3 \times \mathcal{U}(0) \rightarrow
\mathbb{R}^3$. Moreover, we have already found at first normalization
step that
\begin{equation*}
F(Q_j(\theta),0) = F_0(Q_j(\theta)) = 0\ .
\end{equation*}
Suppose that there exists a solution $q(\eps) =
(q_2(\varepsilon),q_3(\varepsilon),q_4(\varepsilon))$ which is at
least continuous in the small parameter, i.e. $\mathcal{C}^0
(\mathcal{U}(0),\mathbb{T}^3)$.  Hence, from continuity, we must have
$$ \lim_{\varepsilon\rightarrow 0}
F(q_2(\varepsilon),q_3(\varepsilon),q_4(\varepsilon),\varepsilon) =
F_0 (q_2(0),q_3(0),q_4(0))=0\ ,
$$ which means that $q(0)\in Q_j$. Let us introduce the matrices
$\tilde{B}_{0,j}(\vartheta)=\frac{\partial
  F_0(Q_j(\vartheta))}{\partial {q}}$ and observe that the tangent
directions to the three families
$$ \partial_{\vartheta}Q_1=\begin{pmatrix} 1 \\ 1 \\ -1
\end{pmatrix},
\qquad \partial_{\vartheta}Q_2=\begin{pmatrix} 1 \\ -1 \\ 1
\end{pmatrix}
\qquad \text{and} \qquad \partial_{\vartheta}Q_3=\begin{pmatrix} 1 \\ -1
\\ -1
\end{pmatrix}
$$ represent the Kernel direction of $\tilde{B}_{0,j}$, 
for $j=1,2,3$, respectively. A standard proposition of bifurcation theory provides a
necessary condition for the existence of a solution
$Q_j(\theta,\eps)$ which is a continuation of $Q_j(\theta)$

\begin{proposition} \label{prop:CN-continuazione-famiglie}
Necessary condition for the existence of a solution
$q(\eps) = Q_j(\vartheta,\varepsilon)$ of \eqref{frm:F(q,epsilon)} is that
$$ F_1(Q_j(\vartheta,0)) \in
\Range(\tilde{B}_{0,j}(\vartheta))\ .
$$ If $\tilde{B}_{0,j}(\vartheta)$ is symmetric, the above condition
simplifies
\begin{equation} \label{frm:kernel-hp}
F_1(Q_j(\vartheta,0)) \perp \Ker(\tilde{B}_{0,j}(\vartheta)).
\end{equation}
\end{proposition}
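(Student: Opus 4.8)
The plan is to derive the necessary condition by differentiating the defining relation $F(q(\eps),\eps)=0$ along the putative solution branch and reading off the resulting solvability (Fredholm) constraint at leading order in $\eps$. The whole argument is a first-order expansion, so the structure of the proof is dictated entirely by the decomposition $F=F_0+\eps F_1$ and by the fact, already recorded above, that $\tilde{B}_{0,j}(\vartheta)=\partial_q F_0(Q_j(\vartheta))$ annihilates the tangent $\partial_\vartheta Q_j$.

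First I would assume that a continuation $q(\eps)=Q_j(\vartheta,\eps)$ exists and is at least $\mathcal{C}^1$ in $\eps$ near $\eps=0$, with $q(0)=Q_j(\vartheta,0)=Q_j(\vartheta)$ lying on the unperturbed family. Since $q(\eps)$ satisfies \eqref{frm:F(q,epsilon)} identically in $\eps$, I would insert $F=F_0+\eps F_1$ and Taylor-expand $F_0(q(\eps))$ about the base point $Q_j(\vartheta)$. Using $F_0(Q_j(\vartheta))=0$ together with $q(\eps)-Q_j(\vartheta)=\eps\,q'(0)+o(\eps)$, the identity $0=F_0(q(\eps))+\eps F_1(q(\eps))$ becomes $0=\eps\,\tilde{B}_{0,j}(\vartheta)\,q'(0)+\eps\,F_1(Q_j(\vartheta))+o(\eps)$.

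Dividing by $\eps$ and letting $\eps\to0$ yields the linear relation $\tilde{B}_{0,j}(\vartheta)\,q'(0)=-F_1(Q_j(\vartheta))$, which exhibits $F_1(Q_j(\vartheta,0))$ as the image under $\tilde{B}_{0,j}(\vartheta)$ of the velocity $q'(0)$; hence $F_1(Q_j(\vartheta,0))\in\Range(\tilde{B}_{0,j}(\vartheta))$, the claimed necessary condition. Observe that the component of $q'(0)$ along the kernel direction $\partial_\vartheta Q_j$ merely reparametrizes the family and is killed by $\tilde{B}_{0,j}$, so it plays no role. For the symmetric case I would invoke the standard decomposition $\Range(A)=\Ker(A^\top)^\perp$ valid for any matrix $A$; when $\tilde{B}_{0,j}(\vartheta)$ is symmetric this reads $\Range(\tilde{B}_{0,j}(\vartheta))=\Ker(\tilde{B}_{0,j}(\vartheta))^\perp$, so membership in the range is equivalent to orthogonality to the kernel, which is exactly \eqref{frm:kernel-hp}.

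The computation is elementary, and the only genuine point requiring care is the regularity of the branch: the argument as written needs $q(\eps)$ to admit the first-order expansion $q(\eps)=q(0)+\eps q'(0)+o(\eps)$ at $\eps=0$. This is precisely why the statement is phrased as a \emph{necessary} condition for a continuation of the prescribed form rather than as an existence result. The delicate part — actually producing such a differentiable branch and isolating the surviving values of $\vartheta$ — is what the higher-order normal form and the Newton-Kantorovich step of Theorem~\ref{teo:forma-normale-r} are responsible for, whereas here one only extracts the obstruction that every such branch must obey.
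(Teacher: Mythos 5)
Your derivation is correct: first-order expanding $F_0(q(\eps))+\eps F_1(q(\eps))=0$ along the branch and using $F_0(Q_j(\vartheta))=0$ gives $\tilde{B}_{0,j}(\vartheta)\,q'(0)=-F_1(Q_j(\vartheta))$, hence the range condition, and $\Range(A)=\Ker(A^\top)^\perp$ settles the symmetric case. The paper states this proposition as a ``standard proposition of bifurcation theory'' and gives no proof at all, so there is nothing to compare against; your argument is exactly the standard one being invoked. Your regularity caveat is worth keeping: the paper's surrounding discussion assumes only that the branch is $\mathcal{C}^0$ in $\eps$, which suffices to place $q(0)$ on the unperturbed family but not to justify the expansion $q(\eps)=q(0)+\eps q'(0)+o(\eps)$, so the differentiability (or at least Lipschitz) hypothesis you make explicit is genuinely needed to extract the solvability constraint.
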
   
Let us apply the above Proposition to show that the families $Q_1$ and
$Q_3$ break down.  Precisely, all their points, except for
those corresponding to $\theta=\{0,\pi/2,\pi\}$, do no represent true
candidates for the continuation.  We compute
$\inter{F_1(Q_j(\vartheta,0)),\partial_\theta Q_j}$ for $j=1,3$
\begin{displaymath}
\langle F_1(Q_1(\vartheta)), \partial_{\vartheta}Q_1 \rangle = 8
  \sin(2\vartheta) = \langle F_1(Q_3(\vartheta)),
  \partial_{\vartheta}Q_3 \rangle\ ,
  \end{displaymath}
which shows that the necessary condition is generically violated
for the two families $Q_{1,3}$, apart from the in/out-of-phase
configurations $(0,0,\pi)$, $(\pi,\pi,0)$, $(0,\pi,\pi)$, $(\pi,0,0)$
and the symmetric vortex configurations $\pm\left(
\frac{\pi}{2},\frac{\pi}{2},\frac{\pi}{2} \right)$, the last being
also points of $Q_2(\theta)$.

A way to conclude that the above mentioned in/out-of-phase
configurations can be continued to periodic solutions is to apply
Theorem~\ref{teo:forma-normale-2}. {Indeed, the main
  and first fact to notice is that if $q_0^*=0,\pi$ then $D_0=0$,
  since it depends only on sines; then by Lemma \ref{lem:M0.spec} we
  get $m_a(0,M_0)\geq 2$. Moreover, a direct computation shows that
  the algebraic multiplicity of the zero eigenvalue of $M_0$ is
  exactly two, so that we can apply Theorem~\ref{teo:forma-normale-2}. In order to verify the main condition
  \eqref{frm:stima-autovalori-bis}, since $D_0=0$}, we can restrict to
compute only $B_1$
$$ B_1 = \left( \begin{matrix} -2 & -1 & -1 \\ -1 & -2 & -1 \\ -1 & -1
  & -2 \\
\end{matrix}\right)T\ ,
$$ and we immediately obtain in all the four cases
\begin{displaymath}
  \gamma=\bigl\langle \langle B_1,\,{a}_1\rangle,{a}_1
  \bigr\rangle = -4T\not=0\ ,\qquad {a}_1=\partial_{\vartheta}Q_{j=1,3}\ .
\end{displaymath}

\begin{remark}
We stress that, for the in/out-of-phase configurations,
the true and approximate angles coincide, namely
$q_{\rm p.o.}=q^*$. This is due to the parity of the Hamiltonian in the
angles and to the Gauge symmetry; the first implies that the
remainder, whatever is its order in $\eps$, only depends on the
cosines, hence its ${p}$-field vanishes at any combination of
$0$ and $\pi$. The second implies that it does never depend on $q_1$,
being $p_1$ an exact constant of motion; in other words, the field
depends only on slow angles $q$. In this case,
Theorem~\ref{teo:forma-normale-2} could be simplified.
\end{remark}

It remains to investigate the second family $Q_2$, which satisfies the
necessary condition \eqref{frm:kernel-hp} simply because it represents a
solution for \eqref{frm:F(q,epsilon)}, namely $F(Q_2(\theta))\equiv 0\,$.

We explicitly construct the normal form up to order three by using
Mathematica\texttrademark and check that this family still persist.
This led us to conjecture that it represents a true solution of the
problem. Indeed, using the complex coordinates as
in~\eqref{frm:H-dNLS}, we can reformulate the continuation of
periodic orbits on the completely resonant torus $I=(I^*,I^*,I^*,I^*)$
by using the usual ansatz
\begin{displaymath}
\psi_j = e^{-\im\omega}\phi_j\ ,
\end{displaymath}
which provides the stationary equation for the amplitudes $\phi_j$
\begin{displaymath}
\lambda\phi_j = 2\phi_j|\phi_j|^2 + \eps
(L\phi)_j\ ,\qquad\lambda=\omega-1\ ,\qquad
(L\phi)_j=\phi_{j+1}+\phi_{j-1}\ .
\end{displaymath}
If we further assume that the continued solutions have the same
amplitude at all the sites, $|\phi_j|=a$, and the phase-shifts belong
to the second family $Q_2$
\begin{displaymath}
  \phi_j = a
  e^{\im\vphi_j}\ ,\qquad\vphi=(\vphi_1,\vphi_1+\theta,\vphi_1+\pi,\vphi_1+\theta+\pi)\ ,
\end{displaymath}
then we realize that for any $\theta\in S^1$ one has
\begin{displaymath}
  L e^{\im\vphi(\theta)}=0\ .
\end{displaymath}
Hence the stationary equation becomes
\begin{displaymath}
  \lambda = 2a^2 = 2I^*\ ,
\end{displaymath}
{which implies that a two-dimensional resonant torus,
  embedded in the original unperturbed four dimensional torus,
  survives for any given $\eps$.}

\begin{remark}
The above formulation would provide a much simpler proof for the
existence of the in/out-of-phase periodic orbits for $\eps\neq 0$, by
simply restricting to study the real $\phi$ configurations solving the
stationary equations \cite{HenT99, Kev_book09}. However, the role of
this example in the present paper is to show how the formal algorithm
works and what kind of insights can lead to in the investigation of
the breaking of completely resonant tori.
\end{remark}


\section{Conclusions}
\label{sec:concl}
Motivated by the aim of investigating the continuation of periodic
orbits on a completely resonant torus with respect to a small
parameter, we have built up an original normal form algorithm for a
classical Hamiltonian model of the form \eqref{frm:H-modello}. This
method naturally extends the averaging procedure of Poincar\'e, which
applies only to non-degenerate approximated solutions. {Hence, it
  allows to deal with all those cases when the extrema of the averaged
  Hamiltonian are not isolated}, like the one-parameter families
explored in Section~\ref{sec:applic}. {The present formulation of the
  result deals with the case of a maximal torus, hence it is more
  suitable for applications for few-bodies problems, e.g., in
  Celestial Mechanics.  In this field, the normal form construction
  here proposed, which provides an highly accurate approximate
  dynamics, could be effectively implemented with the aid of an
  algebraic manipulator (see, e.g., \cite{GioSan-2012}). Besides, the
  use of numerical tools could also include the analysis of the
  spectrum of $M(\eps)$}, which can be approximated at leading order
by the Floquet exponents of the approximate periodic orbits. Hence,
hypothesis \eqref{frm:stima-autovalori} can be verified numerically,
by tracking the dependence of the approximate Floquet spectrum on
$\eps$ in a neighbourhood of the origin.

The normal form algorithm here developed, if suitably extended to
completely resonant low-dimensional tori, could also allow to deal
with degenerate scenarios which emerge studying discrete solitons in
1D non-local discrete nonlinear Schroedinger lattices (like Zig-Zag
dNLS, see \cite{PenKSKP17}): in these models, one parameter families
of solutions of the averaged Hamiltonian appear when in the model long
range interactions (like next-to-nearest neighbourhood) are
added. More naturally, one parameter families of approximate
solutions, like the ones observed in the application developed in
Section~\ref{sec:applic}, appear in the investigation of vortexes in
2D square lattices \cite{PelKF05b}.  In these problems, the only
approach which has been till now explored and applied is based on
bifurcation methods \cite{PelKF05b,PenSPKK16} suitably combined with a
perturbation scheme. Hence, a different and completely constructive
approach would be desirable, especially in terms of possible
applications to the above mentioned lattice models with the help of a
manipulator. This further and not trivial extension will be worked out
in a future publication.


\appendix

\section{Technicalities: normal form construction}
\label{app:est}

The appendix is devoted to technical details and proofs related to the
normal form construction which have been moved here in order to avoid
the overloading of the text.

\subsection{Estimates for the $\nu_{r,s}$ sequence}

\begin{lemma}
The sequence $\{\nu_{r,s}\}_{r\ge 0\,,\,s\ge 0}$ defined in
\eqref{frm:nu-sequence} is bounded by the exponential growth
$$ \nu_{r,s}\leq\nu_{s,s}\leq \frac{100^{s}}{20} \qquad \hbox{for } \quad r\geq
0\,,\ s\geq 0\ .
$$
\end{lemma}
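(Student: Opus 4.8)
The plan is to treat the two inequalities separately, since the first is elementary and the second carries all the weight. For the first, $\nu_{r,s}\le\nu_{s,s}$, I would only use positivity together with the $j=0$ terms of the two recursions in \eqref{frm:nu-sequence}. A one-line induction shows every $\nu_{r,s}$ and $\nu_{r,s}^{(\rmI)}$ is $\ge 1$: the base is $\nu_{0,s}=1$, and each quantity is a sum of nonnegative products, one of which (the $j=0$ term) is a previously defined $\nu$-value. Isolating that $j=0$ term gives at once
\begin{equation}
\nu_{r,s}^{(\rmI)}\ge\nu_{r-1,s}\qquad\text{and}\qquad \nu_{r,s}\ge\nu_{r,s}^{(\rmI)}\ge\nu_{r-1,s}\ ,
\label{frm:mono}
\end{equation}
so $r\mapsto\nu_{r,s}$ is non-decreasing. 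Combined with the stabilization $\nu_{r,s}=\nu_{s,s}$ for $r\ge s$, already recorded after \eqref{frm:nu-sequence}, this yields $\nu_{r,s}\le\nu_{s,s}$ for every $r$.

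It then remains to bound the diagonal $a_s:=\nu_{s,s}$. Here $\lfloor s/s\rfloor=1$, so both sums in \eqref{frm:nu-sequence} reduce to the terms $j=0,1$; using $\nu_{s-1,0}=\nu_{s,0}^{(\rmI)}=1$ one finds the clean identities $\nu_{s,s}^{(\rmI)}=2\nu_{s-1,s}$ and $a_s=\nu_{s,s}=5\,\nu_{s-1,s}$. This exhibits the real difficulty: the diagonal is slaved to the superdiagonal $\nu_{s-1,s}$, which by its own recursion couples to $\nu_{s-2,s},\nu_{s-2,s-1},\dots$, so the array does \emph{not} decouple along the diagonal. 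In particular a \emph{uniform} bound $\nu_{r,s}\le 100^{s}/20$ for all $r$ cannot close the induction, because $a_s=5\,\nu_{s-1,s}$ forces the superdiagonal entry to be smaller by a factor $5$; the off-diagonal decay must be built into the hypothesis.

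Accordingly I would prove, by strong induction on $s$ (with $\nu_{r,0}=1$ as the trivial base and $s=1$ the equality $\nu_{1,1}=5=100/20$), the refined pair of estimates
\begin{equation}
\nu_{r,s}\le 5^{\min(r,s)}20^{\,s-1}\ ,\qquad \nu_{r,s}^{(\rmI)}\le 2\cdot 5^{\,\min(r,s)-1}20^{\,s-1}\ ,
\label{frm:refined}
\end{equation}
which for $r\ge s$ gives $5^{s}20^{s-1}=100^{s}/20$ and hence, via \eqref{frm:mono}, the full statement. Carrying the factor $5^{\min(r,s)}$ rather than the crude $100^{s}/20$ records exactly the decay $\nu_{s-1,s}\le 100^{s-1}=a_s/5$ that is needed to absorb the factor $5$ in the diagonal identity and the coefficients $3\nu_{r-1,r}$ in the second recursion; the smaller bound on $\nu^{(\rmI)}$ leaves room for that recursion to add its $j\ge1$ terms. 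Feeding \eqref{frm:refined} at index $r-1$ into \eqref{frm:nu-sequence}, the generic summand collapses: a direct computation of exponents shows the power of $20$ in the $j$-th term is precisely $20^{\,s-1-j}$ (because $(r-1)j+(s-jr)-1=s-1-j$), so the floor-sums become truncated geometric series in which the dominant $j=0$ term already equals half the target, and the remaining terms decay; the generous base $20$ guarantees the totals stay below $5^{\min(r,s)}20^{s-1}$.

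The main obstacle is precisely the verification that the pair \eqref{frm:refined} is self-reproducing under \emph{both} steps of \eqref{frm:nu-sequence}, i.e. that the geometric series actually close with the stated constants. Two edge cases make the bookkeeping delicate. First, the floor function produces a terminal term whenever $r\mid s$, in which the factor $\nu_{r-1,0}=1$ breaks the uniform exponent collapse and must be estimated by hand. Second, the superdiagonal coefficient $\nu_{r-1,r}$ sits exactly at the boundary $r=s$ of the induction, so its bound $\nu_{s-1,s}\le 100^{s-1}$ has to be supplied by the previous step of the \emph{inner} induction on $r$ rather than by the outer hypothesis on $s$; one must also treat the small case $s=2$ separately, where $\lfloor s/(s-1)\rfloor=2$ rather than $1$. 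Once these three points are dispatched, what remains are the routine geometric-series inequalities that the constant $100$ was chosen, with ample slack, to accommodate.
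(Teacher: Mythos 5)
Your first step (monotonicity in $r$ extracted from the $j=0$ terms, hence $\nu_{r,s}\le\nu_{s,s}$) is fine and coincides with the paper's, and the diagonal identities $\nu^{(\rmI)}_{s,s}=2\nu_{s-1,s}$ and $\nu_{s,s}=5\nu_{s-1,s}$ are correct. The problem is the refined pair of estimates on which everything else rests. First, the bound you propose for $\nu^{(\rmI)}_{r,s}$ is simply false in the stabilized region $r>s$: there $\nu^{(\rmI)}_{r,s}=\nu_{r-1,s}=\nu_{s,s}$, not $\tfrac25\nu_{s,s}$, and already $\nu^{(\rmI)}_{2,1}=\nu_{1,1}=5$ while your bound gives $2\cdot 5^{\min(2,1)-1}20^{0}=2$. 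Second, and more fatally, the induction does not close even where the hypothesis is true: feeding it into the $j$-th summand of the recursion for $\nu^{(\rmI)}_{r,s}$ gives $\bigl(5^{r-1}20^{r-1}\bigr)^{j}\,5^{\min(r-1,s-jr)}\,20^{s-jr-1}$, so consecutive terms differ by a factor of roughly $5^{r-1}/20$, which exceeds $1$ as soon as $r\ge 3$. Your assertion that ``the dominant $j=0$ term already equals half the target, and the remaining terms decay'' therefore fails for every $r\ge 3$; for instance at $(r,s)=(3,7)$ the three bounded summands total about $4.1\times 10^{9}$ against a target of $2\cdot 5^{2}\cdot 20^{6}=3.2\times 10^{9}$. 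The ansatz $5^{\min(r,s)}20^{s-1}$ is not inductively stable, and since you explicitly defer ``the verification that the pair is self-reproducing'', the proof is missing exactly where the difficulty sits.

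The structural reason is that each summand $\nu_{r-1,r}^{j}\nu_{r-1,s-jr}$ is of essentially the same order $\sim 100^{s}$, so the sum over $j$ cannot be controlled as a convergent geometric series; the gain has to come from a convolution structure, not from term-by-term decay. This is what the paper exploits: after eliminating $\nu^{(\rmI)}_{r,s}$ it reduces the recursion to the two-term inequality $\nu_{r,s}\le\nu_{r-1,s}+\nu_{r,r}\nu_{s-r,s-r}$, telescopes along the diagonal to the quadratic convolution $\nu_{r,r}\le 5\sum_{j=1}^{r-1}\nu_{j,j}\nu_{r-j,r-j}$, and compares with the Catalan sequence, obtaining $\nu_{r,r}\le 5^{2r-1}\lambda_{r}$ with $\lambda_{r}\le 4^{r-1}$; that is where $100^{r}/20$ actually comes from. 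To salvage a direct induction you would need an hypothesis encoding this Catalan-type convolution growth, rather than a product of independent geometric factors in $r$ and $s$.
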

 
\begin{proof}
We start with the elimination of $\nu^{(\text{I})}_{r,s}$ in the
definition of $\nu_{r,s}$
$$
\begin{aligned}
\nu_{r,s}&= \sum_{j=0}^{\lfloor s/r
  \rfloor} (3\nu_{r-1,r})^{j}
\sum_{i=0}^{\lfloor s/r \rfloor-j}
(\nu_{r-1,r} )^{i}\nu_{r-1,s-(i+j)r} 
\\
&=\sum_{j=0}^{\lfloor s/r \rfloor} (3\nu_{r-1,r} )^{j}
\sum_{i=j}^{\lfloor s/r \rfloor}
(\nu_{r-1,r} )^{i-j}\nu_{r-1,s-ir}  \cr
&=\sum_{i=0}^{\lfloor s/r \rfloor}
(\nu_{r-1,r} )^{i}\nu_{r-1,s-ir}  \sum_{j=0}^{i}
3^{j} =\sum_{i=0}^{\lfloor s/r \rfloor} \frac{3^{i+1} -1}{2}
(\nu_{r-1,r} )^{i}\nu_{r-1,s-ir} \ , \cr
\end{aligned}
$$ where in the second equality we have exploited
$\nu_{r,0}^{(\text{I})}=1$. Thus we can rewrite the sequence as
\begin{equation*}
\nu_{r,s} = \sum_{j=0}^{\lfloor s/r
  \rfloor} \theta_j\nu_{r-1,r}^{j}\nu_{r-1,s-jr}\ ,
\qquad
\theta_{j} = \frac{3^{j+1} -1}{2}\ .
\end{equation*}
It is immediate to notice that $\nu_{r,s} \leq \nu_{s,s}$ for $s\geq
r$, hence
\begin{equation*}
\nu_{0,s}\le\nu_{1,s}\le\ldots\le\nu_{s,s}=\nu_{s+1,s}=\ldots\ .
\end{equation*}
Moreover
\begin{equation}
  \label{frm:stima-theta-j}
\theta_{0} = 1\ , \qquad \theta_{1} = 4\ , \qquad \theta_{j+1} \leq 5
\theta_{j}\qquad\hbox{for } j\geq0\ .
\end{equation}
and observing that
$\nu_{r,r}=\theta_{0}\nu_{r-1,r}+\theta_{1}\nu_{r-1,r}\,$,
 we get
\begin{equation} \label{frm:stima-nu_(r,r)}
\nu_{r,r}=5\nu_{r-1,r}\qquad\hbox{for }r\ge 1\ .
\end{equation}
From the definition of $\{\nu_{r,s}\}$, we can derive the following: for $r\geq
2$ and $s>2r$ we have
\begin{equation*}
\begin{aligned}
\nu_{r,s}&=\nu_{r-1,s}+ \nu_{r-1,r}\sum_{j=0}^{\lfloor s/r
  \rfloor-1}\theta_{j+1}\nu_{r-1,r}^{j}\nu_{r-1,s-r-jr} \cr
&\leq\nu_{r-1,s}+5\nu_{r-1,r} \sum_{j=0}^{\lfloor s/r
  \rfloor-1}\theta_{j}\nu_{r-1,r}^{j}\nu_{r-1,s-r-jr} \cr
&\leq\nu_{r-1,s}+5\nu_{r-1,r}\nu_{r,s-r}
\leq\nu_{r-1,s}+\nu_{r,r}\nu_{s-r,s-r}\ ,
\end{aligned}
\end{equation*}
where \eqref{frm:stima-theta-j} and \eqref{frm:stima-nu_(r,r)} have been
used; for $r=1$  we have 
\begin{equation*}
\begin{aligned}
\nu_{1,s}&=\nu_{0,s}+\nu_{0,1}\sum_{j=0}^{s-1}\theta_{j+1}\nu_{0,1}^{j}\nu_{0,s-1-j}
\cr &\leq(1+\theta_{1})\nu_{0,s-1}+
5\sum_{j=1}^{s-1}\theta_{j}\nu_{0,1}^{j}\nu_{0,s-1-j} \\
&\leq5\nu_{1,s-1}\leq 5\nu_{s-2,s-1} =\nu_{s-1,s-1}\ ,
\end{aligned}
\end{equation*}
where \eqref{frm:stima-theta-j} has been used, together with
$\nu_{0,s}=1$, for $s\ge 0$.  Due to the above
properties,  we can estimate
$\{\nu_{r,s}\}_{r\ge 0\,,\,s\ge 0}$ by means of its diagonal terms
$\nu_{r,r}$.  Indeed, $\nu_{1,1}=5$ and for $s>2$
$$
\begin{aligned}
\nu_{r,r}&=5\nu_{r-1,r}\leq5\nu_{r-2,r}+5\nu_{r-1,r-1}\nu_{1,1}
\leq\ldots \cr &\leq5\nu_{1,r}+5\left(
\nu_{2,2}\nu_{r-2,r-2}+\ldots+\nu_{r-1,r-1}\nu_{1,1}\right)
\leq5\sum_{j=1}^{r-1}\nu_{j,j}\nu_{r-j,r-j}\ .
\end{aligned}
$$
From this last upper bound, it is possible to verify
$$ \nu_{r,r}\leq 5^{2r-1}\lambda_{r} \qquad \hbox{for } r\geq 1\ ,
$$ with $\{\lambda_r\}_{r\ge 1}$ being the Catalan sequence,
which satisfies $\lambda_{r}\leq4^{r-1}$, thus
$$ \nu_{r,s}\leq\nu_{s,s}\leq \frac{100^{s}}{20} \qquad \hbox{for }\quad r\geq
0\,,\ s\geq 0\ .
$$
\end{proof}

\subsection{Estimates for multiple Poisson brackets}\label{sbs:stima-serie-di-Lie}
Some Cauchy estimates on the derivatives in the restricted domains
will be useful.

\begin{lemma}\label{lem:stima-derivata-Lie}
Let $d\in\reali$ such that $0<d<1$ and $g\in\Pset_{2l}$ be an analytic function with bounded norm
$\|g\|_{1}$.  Then one has
\begin{equation*}
\left\|\parder{g}{\hat{p}_j}\right\|_{1-d}\leq
\frac{\left\|g\right\|_{1}}{d\rho}\ ,
\qquad
\left\|\parder{g}{\hat{q}_j}\right\|_{1-d}\leq
\frac{\left\|g\right\|_{1}}{e d \sigma}\ ,
\end{equation*}
\end{lemma}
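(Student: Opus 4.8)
The plan is to argue directly from the Fourier--Taylor expansion~\eqref{frm:funz} of $g$, differentiating coefficient by coefficient and then comparing the resulting weighted norms term by term. The only structural input needed is that $\|\cdot\|_{1-d}$ denotes the weighted norm in which $\rho$ and $\sigma$ are replaced by $(1-d)\rho$ and $(1-d)\sigma$; thus each estimate reduces to controlling, for a single monomial $\hat p^{\,i}e^{\imunit\prsca{k}{\hat q}}$, the ratio between its contribution to the derivative norm at $1-d$ and its contribution to $\|g\|_1$.

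First I would handle the action derivative. Since $\partial_{\hat p_j}\hat p^{\,i}=i_j\,\hat p^{\,i-e_j}$, the derivative lies in $\Pscr_{2(l-1)}$ and
$$
\left\|\parder{g}{\hat p_j}\right\|_{1-d}
=\sum_{|i|=l}\sum_{k}|g_{i,k}|\,i_j\,\bigl((1-d)\rho\bigr)^{l-1}e^{|k|(1-d)\sigma}\ .
$$
Dividing the generic summand by $|g_{i,k}|\rho^l e^{|k|\sigma}$ leaves the factor $\frac{i_j(1-d)^{l-1}}{\rho}\,e^{-|k|d\sigma}\le\frac{l(1-d)^{l-1}}{\rho}$, where I used $i_j\le l$ and $e^{-|k|d\sigma}\le 1$. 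The claim then follows from the elementary inequality $l(1-d)^{l-1}\le 1/d$, which I would deduce from $(1-d)^{l-1}\le e^{-d(l-1)}$ together with $1+v\le e^v$ evaluated at $v=d(l-1)$.

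Next I would handle the angle derivative. Here $\partial_{\hat q_j}$ produces a factor $\imunit k_j$ without lowering the $\hat p$-degree, so the derivative remains in $\Pscr_{2l}$ and
$$
\left\|\parder{g}{\hat q_j}\right\|_{1-d}
=\sum_{|i|=l}\sum_{k}|g_{i,k}|\,|k_j|\,\bigl((1-d)\rho\bigr)^{l}e^{|k|(1-d)\sigma}\ .
$$
The generic summand now carries the factor $|k_j|(1-d)^l e^{-|k|d\sigma}\le |k|\,e^{-|k|d\sigma}$, and the point is that losing part of the geometric weight is precisely what manufactures the constant $1/e$: maximising $t\mapsto t\,e^{-d\sigma t}$ over $t\ge 0$ gives $|k|\,e^{-|k|d\sigma}\le \frac{1}{e\,d\sigma}$, the maximum being attained at $t=1/(d\sigma)$. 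Combined with $(1-d)^l\le 1$, this yields $\|\partial_{\hat q_j}g\|_{1-d}\le\|g\|_1/(e\,d\sigma)$.

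The computations are routine; the only genuinely delicate points are the two one-variable optimisations, and especially the appearance of the constant $e$ in the second bound, which emerges from the maximum of $t\,e^{-d\sigma t}$. I would state both scalar inequalities explicitly before assembling the term-by-term comparison, so that each estimate becomes a one-line consequence of them.
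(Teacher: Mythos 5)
Your proposal is correct and follows essentially the same route as the paper: term-by-term differentiation of the Fourier--Taylor expansion, dropping the factor $e^{-|k|d\sigma}\le 1$ for the action derivative and using the scalar bounds $l(1-d)^{l-1}\le 1/d$ and $\sup_{t\ge 0}t\,e^{-d\sigma t}=1/(e\,d\sigma)$, which are exactly the two elementary inequalities invoked in the paper's proof (there stated as $m(\lambda-x)^{m-1}\le\lambda^m/x$ and $x^{\alpha}e^{-\delta x}\le(\alpha/(e\delta))^{\alpha}$). The only cosmetic difference is your derivation of the first inequality via $1+v\le e^{v}$, which is a valid alternative to the paper's direct statement.
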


\begin{proof}
  Given $g$ as in~\frmref{frm:funz}, one has
  $$
  \begin{aligned}
    \left\|\parder{g}{\hat{p}_j}\right\|_{1-d} &\leq
    \sum_{{i\in\naturali^{n} \atop |i| = l}}
    \sum_{{\scriptstyle{k\in\ZZ^{n}}}}
    \frac{i_j}{\rho} |g_{i,k}| (1-d)^{l-1} \rho^l e^{|k|(1-d)\sigma}\cr
    &\leq
    \frac{1}{d\rho}
    \sum_{{i\in\naturali^{n} \atop |i| = l}}
    \sum_{{\scriptstyle{k\in\ZZ^{n}}}}
    |g_{i,k}| \rho^l e^{|k|\sigma} = \frac{\left\|g\right\|_{1}}{d\rho}\ ,
  \end{aligned}
  $$
  where we use the elementary inequality $m(\lambda-x)^{m-1}\leq
  {\lambda^m}/{x}$, for $0<x<\lambda$ and $m\geq1$.

  Similarly,
  $$
  \begin{aligned}
    \left\|\parder{g}{\hat{q}_j}\right\|_{1-d} &\leq
    \sum_{{i\in\naturali^{n} \atop |i| = l}}
    \sum_{{\scriptstyle{k\in\ZZ^{n}}}}
     |k_j|\,|g_{i,k}| (1-d)^l \rho^l e^{|k|(1-d)\sigma}\cr
    &\leq
    \frac{1}{e d \sigma}
    \sum_{{i\in\naturali^{n} \atop |i| = l}}
    \sum_{{\scriptstyle{k\in\ZZ^{n}}}}
    |g_{i,k}| \rho^l e^{|k|\sigma} = \frac{\left\|g\right\|_{1}}{ed\sigma}\ ,
  \end{aligned}
  $$

  where we use the elementary inequality $x^\alpha e^{-\delta x}\leq
  \left({\alpha}/({e \delta})\right)^\alpha$, for positive $\alpha$, $x$ and $\delta$.
\end{proof}

\begin{lemma}\label{lem:stima-derivata-gen}
Let $d\in\reali$ such that $0<d<1$.  Let the generating functions
$\chi_0^{(r)}$ and $\chi_2^{(r)}$ be as in~\frmref{frm:chi0-r}.  Then
one has
\begin{align}
\Biggl\|\parder{\chi_0^{(r)}}{\hat{q}_j}\Biggr\|_{1-d}&\leq
\frac{\bigl\|X_0^{(r)}\bigr\|_{1}}{ed\sigma} + |\zeta^{(r)}|\ ,
\label{frm:stima-dchi0dq}
\\
\left\|\parder{\chi_2^{(r)}}{\hat{q}_j}\right\|_{1-d}&\leq
\frac{\bigl\|\chi_2^{(r)}\bigr\|_{1}}{ed\sigma}\ ,
\label{frm:stima-dchi2dq}
\\
\left\|\parder{\chi_2^{(r)}}{\hat{p}_j}\right\|_{1-d}&\leq
\frac{\bigl\|\chi_2^{(r)}\bigr\|_{1}}{\rho}\ ;
\label{frm:stima-dchi2dp}
\end{align}
moreover, for $j\geq1$, 
\begin{align}
\left\|\lie{\chi_0^{(r)}}^{j}f\right\|_{1-d-d^{\prime}}
&\leq\frac{j!}{e}
\left(
\frac{\|X_0^{(r)}\|_{1-d'}}{{d^2\rho\sigma}} + \frac{e |\zeta^{(r)}|}{{d\rho}}
  \right)^{j}
  \|f\|_{1-d^{\prime}}\ ,
  \label{frm:stima-liechi0}
\\
\left\|\lie{\chi_2^{(r)}}^{j}f\right\|_{1-d-d^{\prime}}
&\leq\frac{j!}{e}
\left(
\frac{\|\chi_2^{(r)}\|_{1-d'}}{{d^2\rho\sigma}}
  \right)^{j}
  \|f\|_{1-d^{\prime}}\ ,
    \label{frm:stima-liechi2}
\end{align}
\end{lemma}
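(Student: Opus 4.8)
The plan is to separate the statement into the three pointwise derivative bounds \eqref{frm:stima-dchi0dq}--\eqref{frm:stima-dchi2dp}, which are immediate consequences of the preceding lemma, and the two iterated-bracket bounds \eqref{frm:stima-liechi0}--\eqref{frm:stima-liechi2}, which carry the real content.

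For the first three I would simply feed the constituent pieces of the generating functions into the Cauchy estimates of Lemma~\lemref{lem:stima-derivata-Lie}. Writing $\chi_0^{(r)}=X_0^{(r)}+\prsca{\zeta^{(r)}}{\hat q}$ with $X_0^{(r)}\in\Pscr_0$, one has $\parder{\chi_0^{(r)}}{\hat q_j}=\parder{X_0^{(r)}}{\hat q_j}+\zeta_j^{(r)}$; bounding the first summand by the $\hat q$-estimate $\norm{\partial_{\hat q_j}X_0^{(r)}}_{1-d}\le\norm{X_0^{(r)}}_1/(ed\sigma)$ and the constant $\zeta_j^{(r)}$ by $|\zeta^{(r)}|$ yields \eqref{frm:stima-dchi0dq}, while \eqref{frm:stima-dchi2dq} is the same $\hat q$-estimate applied to $\chi_2^{(r)}\in\Pscr_2$. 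The only subtlety is \eqref{frm:stima-dchi2dp}: since $\chi_2^{(r)}$ is homogeneous of degree one in $\hat p$, its $\hat p$-derivative lands in $\Pscr_0$ and no power of $\hat p$ is lost below the bottom degree, so there is no need to shrink the action domain; bounding the restricted weight by the unrestricted one directly produces the factor $1/\rho$ with no $1/d$.

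The heart of the lemma is \eqref{frm:stima-liechi0}--\eqref{frm:stima-liechi2}, and the main obstacle is to obtain a single factorial $j!$ rather than the $(j!)^2$ that a naive term-by-term iteration of Cauchy's inequality with equal domain increments $d/j$ would produce (each Poisson bracket costs $\sim 1/\delta^2$, and $(j^2/d^2)^j\sim(j!)^2$). I would instead read $\lie{\chi}^j f$ as $j!$ times the $j$-th Taylor coefficient in complex time of $t\mapsto f\circ\Phi_\chi^t$, where $\Phi_\chi^t$ is the Hamiltonian flow of $\chi$, and apply a Cauchy estimate in $t$: if $\Phi_\chi^t$ maps $\Dscr_{(1-d'-d)(\rho,\sigma)}$ into $\Dscr_{(1-d')(\rho,\sigma)}$ for all $|t|\le T$, then $\norm{\lie{\chi}^j f}_{1-d-d'}\le (j!/T^j)\norm{f}_{1-d'}$, which produces exactly one $j!$ and a geometric base $1/T$. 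The admissible time $T$ is controlled by the flow displacement, hence precisely by the derivative bounds just established, so the scheme closes by using the first part of the lemma.

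It then remains to insert the structure of each generating function. For $\chi_0^{(r)}$, which is independent of $\hat p$, the flow freezes the angles ($\dot{\hat q}=\partial_{\hat p}\chi_0^{(r)}=0$) and translates the actions at constant speed $\dot{\hat p}=-(\partial_{\hat q}X_0^{(r)}+\zeta^{(r)})$; requiring the action displacement to fit inside the shrink $d\rho$ gives $1/T\le(\norm{\partial_{\hat q}X_0^{(r)}}+|\zeta^{(r)}|)/(d\rho)$, and since the $X_0^{(r)}$ term still needs a $\hat q$-estimate (one more factor $1/(ed\sigma)$) while the constant $\zeta^{(r)}$ does not, the base splits into the two contributions $\norm{X_0^{(r)}}/(d^2\rho\sigma)$ and $e|\zeta^{(r)}|/(d\rho)$ of \eqref{frm:stima-liechi0}. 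For $\chi_2^{(r)}\in\Pscr_2$ the flow moves both variables and is degree-preserving by Lemma~\lemref{lem:poisson}; the action equation $\dot{\hat p}=-\partial_{\hat q}\chi_2^{(r)}$ is the binding constraint (for small $d$ it is tighter than $\dot{\hat q}=\partial_{\hat p}\chi_2^{(r)}$) and, via \eqref{frm:stima-dchi2dq}, forces $1/T\sim\norm{\chi_2^{(r)}}/(d^2\rho\sigma)$, giving \eqref{frm:stima-liechi2}. The residual slack of a factor $e^{j-1}$ between what the time-Cauchy estimate delivers and the stated bound is harmless, and the precise prefactor $1/e$ together with the constant $e$ multiplying $|\zeta^{(r)}|$ are just the optimized forms of the elementary inequality $x^\alpha e^{-\delta x}\le(\alpha/(e\delta))^\alpha$ already exploited in Lemma~\lemref{lem:stima-derivata-Lie}.
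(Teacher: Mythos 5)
Your treatment of \eqref{frm:stima-dchi0dq}--\eqref{frm:stima-dchi2dp} is correct and is exactly the ``minor modification'' of Lemma~\ref{lem:stima-derivata-Lie} that the paper has in mind; in particular you correctly observe that the degree-one homogeneity of $\chi_2^{(r)}$ in $\hat p$ makes $\partial\chi_2^{(r)}/\partial\hat p_j$ an exact operation costing only $1/\rho$, with no Cauchy loss.

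For \eqref{frm:stima-liechi0}--\eqref{frm:stima-liechi2} you depart from the paper, and your route has a genuine gap. The complex-time Cauchy estimate applied to $t\mapsto f\circ\Phi_\chi^t$ controls the \emph{supremum} of $\lie{\chi}^{j}f$ on the restricted domain by the supremum of $f$; but the lemma is stated, and is used throughout the recursive scheme of Lemma~\ref{lem:lemmone}, in the weighted Fourier norm $\|\cdot\|_{\rho,\sigma}$. The sup norm is dominated by the Fourier norm, not conversely: passing from a sup-norm bound back to a Fourier-norm bound requires a further restriction in $\rho$ and $\sigma$ and introduces dimension-dependent conversion factors, so your argument does not deliver the stated inequality and would wreck the explicit constants on which Lemma~\ref{lem:lemmone} relies. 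You would also need to establish separately that $\Phi_\chi^t$ is analytic for complex $|t|\le T$ and maps $\Dscr_{(1-d-d')(\rho,\sigma)}$ into $\Dscr_{(1-d')(\rho,\sigma)}$, which is extra machinery you only sketch. Moreover, the motivation for the detour is mistaken: direct iteration does \emph{not} force $(j!)^2$. The paper sets $\delta=d/j$ and, at each step, estimates the derivative of the \emph{fixed} generating function over the full restriction $j\delta=d$ (a cost of order $1/d$, incurred once and valid on every subdomain), while only the derivative of the accumulated bracket pays the increment $\delta$ (a cost of order $j/d$); the per-step factor is then of order $j/d^{2}$, whose $j$-fold product is $j^{j}/d^{2j}\le j!\,e^{j-1}/d^{2j}$ by the elementary inequality $j^{j}\le j!\,e^{j-1}$ --- a single factorial, obtained entirely within the Fourier-norm calculus of the first part of the lemma. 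I would rewrite the second half of your proof along these lines.
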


\begin{proof}
The proofs of~\eqref{frm:stima-dchi0dq}--\eqref{frm:stima-dchi2dp} are
just minor modifications of Lemma~\lemref{lem:stima-derivata-Lie},
thus are left to the reader.

Coming to~\eqref{frm:stima-liechi0}, let $\delta=d/j$ with $j\geq1$. Proceeding iteratively we get
\begin{align*}
\left\|\lie{\chi_0^{(r)}}^{j}f\right\|_{1-d-d^{\prime}}
&\leq
\left(
\frac{\|X_0^{(r)}\|_{1-d'}}{{j\delta^2e\rho\sigma}} + \frac{|\zeta^{(r)}|}{{\delta\rho}}
  \right)
\left\|\lie{\chi_0^{(r)}}^{j-1}f\right\|_{1-d^{\prime}-(j-1)\delta}
\cr
&\leq\ldots
\cr
&\leq\frac{j!}{e}
\left(
\frac{\|X_0^{(r)}\|_{1-d'}}{{d^2\rho\sigma}} + \frac{e |\zeta^{(r)}|}{{d\rho}}
  \right)^{j}\|f\|_{1-d^{\prime}}\ ,
\end{align*}
where we used the trivial inequality $j^{j}\leq
j!\,e^{j-1}$, holding true for $j\ge 1\,$.
Finally, the proof of~\eqref{frm:stima-liechi2} is the same, mutatis mutandis.
\end{proof}

\subsection{Estimates for the generating functions}\label{sbs:stima-generatrici}
\begin{lemma}
\label{lem:generatrici}
Let $d\in\reali$ such that $0<d<1$. The generating function
$X_0^{(r)}$ and the vector $\zeta^{(r)}$ are bounded by
\begin{equation}
\Vert X_0^{(r)}\Vert_{1-d} \leq \frac{\Vert
  f_0^{(r-1,r)}\Vert_{1-d}}{\omega}\ , \qquad
\vert\zeta^{(r)}\vert \leq \frac{\Vert f_2^{(r-1,r)}\Vert_{1-d}}{m\rho}\ .
\label{frm:stima-chi0}
\end{equation}
The generating function $\chi_2^{(r)}$ is instead bounded by
\begin{equation}
  \Vert\chi_2^{(r)}\Vert_{1-d} \leq
  \frac{1}{\omega}\left(
  2\Vert f_2^{(r-1,r)}\Vert_{1-d} + \frac{2}{e\delta_r\rho\sigma}
  \frac{\Vert f_0^{(r-1,r)}\Vert_{1-d}}{\omega} \|f_4^{(0,0)}\|_{1}
  \right)\ .
\label{frm:stima-chi2}
\end{equation}
\end{lemma}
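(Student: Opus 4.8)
The plan is to read the three bounds directly off the explicit solutions of the two homological equations and of the linear system~\eqref{frm:traslazione}, using only that $\norm{\cdot}_{1-d}$ is an algebra norm, that evaluation at the real phase $q^*$ and averaging over $q_1$ are norm non-increasing, and the Cauchy-type estimates of Lemma~\ref{lem:stima-derivata-Lie}. I begin with $X_0^{(r)}$: its Fourier coefficients are $c^{(r-1)}_{0,k}/(\imunit k_1\omega)$ and are supported on $k_1\neq0$, so that $|k_1|\ge1$ damps each of them by at least $1/\omega$ with respect to the corresponding coefficient of $f_0^{(r-1,r)}$. Summing the weighted moduli and discarding the non-negative $k_1=0$ part of $\norm{f_0^{(r-1,r)}}_{1-d}$ yields $\norm{X_0^{(r)}}_{1-d}\le\norm{f_0^{(r-1,r)}}_{1-d}/\omega$ at once.

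For $\zeta^{(r)}$ I would invoke the twist hypothesis~\eqref{frm:twist}. Applying it to $v=\zeta^{(r)}$ and using~\eqref{frm:traslazione} gives
\begin{displaymath}
m\,|\zeta^{(r)}| \le \sum_i\Bigl|\sum_j C_{ij}\zeta_j^{(r)}\Bigr|
= \sum_i\Bigl|\parder{}{\hat p_i}\bigl\langle f_2^{(r-1,r)}\big|_{q=q^*}\bigr\rangle_{q_1}\Bigr|\ .
\end{displaymath}
Since $f_2^{(r-1,r)}\in\Pscr_2$ is linear in $\hat p$, restricting to the real phase $q^*$, averaging over $q_1$ and differentiating in $\hat p_i$ merely extract a subsum of its degree-one coefficients; the weight $\rho$ carried by a linear monomial then converts $\sum_i|\partial_{\hat p_i}(\cdots)|$ into $\norm{f_2^{(r-1,r)}}_{1-d}/\rho$, whence $|\zeta^{(r)}|\le\norm{f_2^{(r-1,r)}}_{1-d}/(m\rho)$.

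The bound on $\chi_2^{(r)}$ is the delicate one. Solving its homological equation as before (again $|k_1|\ge1$ on the support) gives $\norm{\chi_2^{(r)}}_{1-d}\le\norm{f_2^{(\rmI;r-1,r)}}_{1-d}/\omega$, so everything reduces to the source term. I would use its explicit form from~\eqref{frm:f^(I;r-1,s)}, namely $f_2^{(\rmI;r-1,r)}=f_2^{(r-1,r)}-\langle f_2^{(r-1,r)}(q^*)\rangle_{q_1}+\lie{X_0^{(r)}}f_4^{(0,0)}$, and estimate it termwise. The first two summands are each controlled by $\norm{f_2^{(r-1,r)}}_{1-d}$, since both the averaged-and-evaluated operation and the inclusion of the $k_1=0$ part are norm non-increasing; together they produce the factor $2$.

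The main obstacle is the Poisson-bracket term $\lie{X_0^{(r)}}f_4^{(0,0)}$. The key simplification, from the $\Pscr$-grading, is that $X_0^{(r)}\in\Pscr_0$ depends only on the angles while $f_4^{(0,0)}$ depends only on the actions, so the bracket collapses to $\sum_j(\partial_{\hat q_j}X_0^{(r)})(\partial_{\hat p_j}f_4^{(0,0)})$. Using the algebra property together with the angle Cauchy estimate of Lemma~\ref{lem:stima-derivata-Lie} applied with the restriction $\delta_r$ (which is exactly what produces the factor $1/(e\delta_r\sigma)$ and the loss of analyticity width) gives $\norm{\partial_{\hat q_j}X_0^{(r)}}\le\norm{X_0^{(r)}}/(e\delta_r\sigma)$; on the other factor, the degree-two homogeneity of $f_4^{(0,0)}$ yields $\sum_j\norm{\partial_{\hat p_j}f_4^{(0,0)}}_1=(2/\rho)\norm{f_4^{(0,0)}}_1$, and this is the origin of the $2$ and of the $1/\rho$. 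Multiplying and inserting the already established $\norm{X_0^{(r)}}\le\norm{f_0^{(r-1,r)}}_{1-d}/\omega$ bounds the third term by $\tfrac{2}{e\delta_r\rho\sigma}\,\tfrac{\norm{f_0^{(r-1,r)}}_{1-d}}{\omega}\,\norm{f_4^{(0,0)}}_1$; collecting the three contributions and dividing by $\omega$ reproduces exactly~\eqref{frm:stima-chi2}. The one point demanding care is the bookkeeping of the analyticity widths: the angle derivative of $X_0^{(r)}$ forces the restriction by $\delta_r$, and one must verify that the intermediate weights are compatible so that all pieces live on the common domain of weight $1-d$.
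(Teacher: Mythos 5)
Your proposal is correct and follows essentially the same route as the paper's proof: the bound on $X_0^{(r)}$ read off the explicit Fourier solution, the twist condition \eqref{frm:twist} applied to $v=\zeta^{(r)}$ combined with the Euler-type identity $\|\nabla_{\hat p}g\|=\|g\|/\rho$ for $g\in\Pscr_2$, and the three-term decomposition of $f_2^{(\rmI;r-1,r)}$ with the estimate $\|f-\langle f(q^*)\rangle_{q_1}\|\leq 2\|f\|$ for the first two pieces and a Cauchy estimate on $\partial_{\hat q}X_0^{(r)}$ for the bracket term. Your explicit accounting of where each factor ($2$, $1/(e\delta_r\sigma)$, $2/\rho$) originates, and your remark on the loss of analyticity width $\delta_r$, match the paper's computation as carried out in Lemmas~\ref{lem:lemmone.1} and~\ref{lem:lemmone.2}.
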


\begin{proof}
The estimate for $X_0^{(r)}$ is trivial.  The estimate for
$\chi_2^{(r)}$, that is controlled by $f_2^{(\rmI;r-1,r)}$, is a
little bit tricky.  Indeed, one has to explicitly exploit the fact that
$$
f^{(\rmI;r-1,r)}_{2} = f^{(r-1,r)}_{2} - \inter{f^{(r-1,r)}_{2}(q^*)}_{q_1}
+ L_{X_0^{(r)}} f_4^{(0,0)}\ ,
$$
together with the trivial estimate 
$$
\Vert f-\langle f(q^*)\rangle_{q_1} \Vert_{1-d} \leq 2\Vert f\Vert_{1-d}\ .
$$
As $C$ satisfy~\eqref{frm:twist}, there exists a solution $\zeta^{(r)}$
of~\eqref{frm:traslazione} which satisfies
$$
\left\|\nabla_{\hat{p}}\bigl\langle f_{2}^{(r-1,r)}\bigr|_{{q}={q}^*}\bigr\rangle_{q_1}\right\|_{1-d_{r-1}}
=
\Bigl|\sum_{j} C_{ij} \zeta_j^{(r)}\Bigr| \geq m |\zeta^{(r)}|
\ .
$$
Moreover, by the definition of the norm one has
$$
\left\|\nabla_{\hat{p}}\bigl\langle f_{2}^{(r-1,r)}\bigr|_{{q}={q}^*}\bigr\rangle_{q_1}\right\|_{1-d_{r-1}}
=
\frac{\left\|\bigl\langle f_{2}^{(r-1,r)}\bigr|_{{q}={q}^*}\bigr\rangle_{q_1}\right\|_{1-d_{r-1}}}{\rho}
\leq
\frac{\left\| f_{2}^{(r-1,r)}\right\|_{1-d_{r-1}}}{\rho}
\ .
$$
Combining the latter inequalities one gets~\eqref{frm:stima-chi0}.
\end{proof}

\subsection{Estimates for the first and second normalization step}\label{sbs:stima-primosecondo-passo}
The following two Lemmas collect the estimates concerning the first
two steps of the normal form algorithm previously described.  We
decide to explicitly report the results concerning the normal form at
order one and two with the purpose of making transparent the structure
of the estimates of the different terms appearing in the normalized
Hamiltonian.  Furthermore, the first two steps are needed so as to
verify the inductive proof for the forthcoming
Lemma~\lemref{lem:lemmone}.

\begin{lemma} \label{lem:lemmone.1}
  Consider a Hamiltonian $H^{(0)}$ expanded as in~\frmref{frm:H(0)-f}.
  Let $\chi_0^{(1)}$ and $\chi_2^{(1)}$ be the generating functions
  used to put the Hamiltonian in normal form at order one, then one has
  \begin{equation*}
  \begin{aligned}
    \|X_0^{(1)}\|_{1} &\leq \frac{1}{\omega} \nu_{0,1} E\epsilon\ ,\cr
    |\zeta^{(1)}| &\leq  \frac{1}{4m\rho} \nu_{0,1} E\epsilon\ ,\cr
    \|\chi_2^{(1)}\|_{1-\delta_1} &\leq \frac{1}{\omega}
    3\nu_{0,1} \Xi_1  \frac{E}{4}\epsilon\ .
  \end{aligned}
  \end{equation*}
  The terms appearing in the expansion of $H^{(\rmI;0)}$, i.e.
  in~\frmref{frm:H(I;r-1)-f} with $r=1$, are bounded as
  $$
  \begin{aligned}
    \|f_{0}^{(\rm I;0,1)}\|_{1-\delta_1} &\leq {E}\epsilon\ ,\cr
    \|f_{l}^{(\rm I;0,s)}\|_{1-\delta_1} &\leq \nu^{(\rm I)}_{1,s} \Xi_1^s 
    \frac{E}{2^l}\epsilon^s\ .
  \end{aligned}
  $$
  The terms appearing in the expansion of $H^{(1)}$, i.e.
  in~\frmref{frm:f^(r,s)} with $r=1$, are bounded as
  \begin{equation}
  \begin{aligned}
    \|f_{0}^{(1,s)}\|_{1-d_1} &\leq \nu_{1,s} \Xi_1^{2s-2} {E}\epsilon^s\ ,\cr
    \|f_{2}^{(1,s)}\|_{1-d_1} &\leq \nu_{1,s} \Xi_1^{2s-1} \frac{E}{2^2}\epsilon^s\ ,\cr
    \|f_{l}^{(1,s)}\|_{1-d_1} &\leq \nu_{1,s} \Xi_1^{2s}  \frac{E}{2^l}\epsilon^s\ .
  \end{aligned}
  \end{equation}
\end{lemma}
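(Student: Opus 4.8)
The plan is to specialize the general inductive step to $r=1$ and to exhibit explicitly how the four entries in the definition \eqref{frm:Xi-r} of $\Xi_1$ enter, so that the computation serves both as a self-contained estimate and as the base case of the induction in Lemma~\ref{lem:lemmone}. I would organize the argument along the three blocks of the statement: the generating functions, the terms of $H^{(\rmI;0)}$, and the terms of $H^{(1)}$. First, the bounds on $X_0^{(1)}$ and $\zeta^{(1)}$ follow directly from Lemma~\ref{lem:generatrici} together with the initial decay \eqref{frm:decadimento-iniziale} (so that $\|f_0^{(0,1)}\|_1\le E\eps$ and $\|f_2^{(0,1)}\|_1\le \frac{E}{4}\eps$) and the value $\nu_{0,1}=1$. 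For $\chi_2^{(1)}$ I would insert \eqref{frm:stima-chi2} with $\|f_4^{(0,0)}\|_1\le E/16$; the two contributions are then dominated by the third entry $2+\frac{E}{2e\omega\delta_1\rho\sigma}$ of $\Xi_1$, producing exactly the factor $\frac{1}{\omega}3\Xi_1\frac{E}{4}\eps$. This is the first place where the particular shape of \eqref{frm:Xi-r} is exploited.

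For the terms of $H^{(\rmI;0)}$ I would feed the generating-function bounds into the bracket estimate \eqref{frm:stima-liechi0}, accepting a domain loss $\delta_1$. The decisive observation is that the combination occurring there, $\frac{\|X_0^{(1)}\|_1}{\delta_1^2\rho\sigma}+\frac{e|\zeta^{(1)}|}{\delta_1\rho}$, coincides with $\eps$ times the second entry of $\Xi_1$, hence is $\le\Xi_1\eps$, whence $\|\lie{\chi_0^{(1)}}^{j}f\|_{1-\delta_1}\le \frac{j!}{e}(\Xi_1\eps)^{j}\|f\|_1$. Applying this term by term in the recursion \eqref{frm:f^(I;r-1,s)}, together with $\|f_{2l+2j}^{(0,s-j)}\|_1\le \frac{E}{2^{2l}}4^{-j}\eps^{s-j}$, yields a geometric sum; bounding each $\Xi_1$-power by $\Xi_1^{s}$ and counting the $s+1$ surviving summands reproduces the factor $\nu_{1,s}^{(\rmI)}=s+1$ dictated by \eqref{frm:nu-sequence}. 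The averaged term $f_0^{(\rmI;0,1)}=\inter{f_0^{(0,1)}}_{q_1}$ is handled separately and gives the sharper bound $E\eps$.

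For $H^{(1)}$ I would repeat the scheme with \eqref{frm:stima-liechi2}. Using the bound on $\chi_2^{(1)}$ and the fourth entry of $\Xi_1$ one gets $\frac{\|\chi_2^{(1)}\|_{1-\delta_1}}{\delta_1^2\rho\sigma}\le 3\Xi_1^2\eps$, so that each power of $\lie{\chi_2^{(1)}}$ costs a factor $3\Xi_1^2\eps$. Summing the recursion \eqref{frm:f^(r,s)} and recognizing $\sum_{j}3^{j}\nu_{1,s-j}^{(\rmI)}=\nu_{1,s}$ from \eqref{frm:nu-sequence} yields the prefactor $\nu_{1,s}$ and the \emph{generic} $\Xi_1$-exponent $2s$; note that the stated exponents $2s-2,\,2s-1,\,2s$ are exactly $b(1,s,l)=2s-w_l$.

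The main obstacle, and the only non-mechanical point, is obtaining the \emph{refined} exponents $2s-2$ for $f_0^{(1,s)}$ and $2s-1$ for $f_2^{(1,s)}$ in place of the generic $2s$. These $w_l$-savings rest on two structural facts that must be used with care: (i) the starting Hamiltonian \eqref{frm:H(0)-f} carries no order-$\eps^0$ terms of degree $0$ or $1$ in $\hat p$, i.e.\ $f_0^{(0,0)}=f_2^{(0,0)}=0$, which kills the top ($j=s$) summand in the second-stage sums; and (ii) the homological structure leaves $f_0^{(\rmI;0,1)}=\inter{f_0^{(0,1)}}_{q_1}$ free of any $\Xi_1$-factor, whereas $f_2^{(\rmI;0,1)}$ inherits one extra power of $\Xi_1$ from the piece $\lie{X_0^{(1)}}f_4^{(0,0)}$. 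Combining (i)--(ii) with the telescoping first term of the $f_2$-recursion in \eqref{frm:f^(r,s)}, the worst surviving $\Xi_1$-exponent drops to $2s-2$ for $f_0^{(1,s)}$ and to $2s-1$ for $f_2^{(1,s)}$, which is exactly the claim.
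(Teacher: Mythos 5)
Your proposal is correct and follows essentially the same route as the paper's proof: the generating functions are bounded via Lemma~\ref{lem:generatrici} and the entries of $\Xi_1$, the first-stage terms via \eqref{frm:stima-liechi0} with the identification of $\frac{\|X_0^{(1)}\|_1}{\delta_1^2\rho\sigma}+\frac{e|\zeta^{(1)}|}{\delta_1\rho}$ with $\Xi_1\eps$, and the second-stage terms via \eqref{frm:stima-liechi2}, with the refined exponents $2s-2$ and $2s-1$ obtained exactly as in the paper by isolating the low-order summands ($f_0^{(\rmI;0,1)}$ carrying no $\Xi_1$, $f_2^{(\rmI;0,1)}$ carrying one) and using that $f_0^{(0,0)}=f_2^{(0,0)}=0$. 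No gaps.
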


\begin{proof}
Using Lemma~\ref{lem:generatrici}, we immediately get the bounds
$$
\norm{X_0^{(1)}}_1\leq \frac1\omega \norm{f_0^{(0,1)}}_1 \leq
\frac1\omega E\eps\ ,\qquad |\zeta^{(1)}|\leq
\frac1{m\rho}\norm{f_2^{(0,1)}}_1 \leq \frac{E\eps}{4m\rho}\ ,
$$
thus, from~\eqref{frm:stima-dchi0dq} with $r=1$ we get
$$
\Biggl\|\parder{\chi_0^{(1)}}{\hat{q}_j}\Biggr\|_{1-\delta_1}
\leq
\frac{E \epsilon}{\omega e \delta_1 \sigma}
+\frac{E\eps}{4m\rho}
\leq
\left(\frac{1}{\omega e \delta_1 \sigma}
+\frac{1}{4m\rho}\right)
E \epsilon
\ .
$$

The terms $f_{l}^{(\text{I};0,s)}$ appearing in the expansion of the
Hamiltonian $H^{(\text{I};0)}$ are bounded as follows.  For $l=0$ and
$s=1$ one has
\begin{equation}
\|f_{0}^{(\text{I};0,1)}\|_{1-\delta_1} \leq
\|f_{0}^{(0,1)}\|_{1-\delta_1} \leq E\epsilon\ ,
\label{frm:f^{(I;0,1)}_0}
\end{equation}
while for the remaining terms one has
$$
\begin{aligned}
\|f_{l}^{(\text{I};0,s)}\|_{1-\delta_1} &\leq \sum_{j=0}^{s}
\frac{1}{j!} \|L_{\chi^{(1)}_{0}}^{j}
f^{(0,s-j)}_{l+2j}\|_{1-\delta_1}\cr &\leq \sum_{j=0}^{s} \frac{1}{e}
\left(
\frac{\|X_0^{(1)}\|_{1-d}}{{\delta_1^2\rho\sigma}} + \frac{e |\zeta^{(1)}|}{{\delta_1\rho}}
  \right)^{j}
  \|f^{(0,s-j)}_{l+2j}\|_{1}\cr &
  \leq \sum_{j=0}^{s} \frac{1}{e}
\left(\frac{1}{\omega \delta_1^2\rho\sigma }
+\frac{e}{4m\delta_1\rho^2}\right)^j
E^j \epsilon^j \frac{E}{2^{l+2j}}\epsilon^{s-j}\cr&\leq
\frac{E\epsilon^{s}}{2^{l}}\sum_{j=0}^{s} \frac{1}{e}
\left(\frac{E}{\omega \delta_1^2\rho\sigma }
+\frac{e E}{4m\delta_1\rho^2}\right)^j
\cr
&< (s+1) \Xi_1^s \frac{E}{2^l}\epsilon^s = \nu^{(\text{I})}_{1,s} \Xi_1^s
\frac{E}{2^l}\epsilon^s,
\end{aligned}
$$
where we used the definition of the constant $\Xi_1$ and 
Lemma~\ref{lem:stima-derivata-gen}.

Coming to the second stage of the normalization step, the generating
function $\chi_2^{(1)}$ is bounded, as in \eqref{frm:stima-chi2}, by
$$
\begin{aligned}
\|\chi_2^{(1)}\|_{1-\delta_1} &\leq \frac{1}{\omega}
\left(
  2 \Vert f_2^{(0,1)}\Vert_{1} + \frac{2}{e\delta_1\rho\sigma}
  \frac{\Vert f_0^{(0,1)}\Vert_{1-\delta_1}}{\omega} \|f_4^{(0,0)}\|_{1}
  \right)\\
& \leq \frac{1}{\omega}
\left(
  2 \frac{E}{4}\epsilon + \frac{2}{e\delta_1\rho\sigma}
  \frac{E \epsilon}{\omega} \frac{E}{2^4}
  \right)
  \\
  &\leq
   \frac{1}{\omega}
\left(
  2 + \frac{E}{2\omega e\delta_1\rho\sigma}
  \right) \frac{E}{4}\epsilon\\
  &<\frac{1}{\omega}3\nu_{0,1} \Xi_1 \frac{E}{4}\epsilon\ .
\end{aligned}
$$

The terms $f_{l}^{(1,s)}$ appearing in the expansion of the
Hamiltonian $H^{(1)}$ are bounded as follows.  The term $f_0^{(1,1)}$
is unchanged, while for $l=0$ and $s=2$ one has
$$
\begin{aligned}
  \|f_{0}^{(1,2)}\|_{1-d_1}
  &\leq \|f^{(\text{I};0,2)}_{0}\|_{1-\delta_1} +
  \frac{1}{e}\frac{1}{\delta_1^2\rho\sigma}
  \|{\chi^{(1)}_{2}}\|_{1-\delta_1}
  \|f^{(\text{I};0,1)}_{0}\|_{1-\delta_1}
  \cr
  &\leq
  \nu^{(\text{I})}_{1,2} \Xi_1^2  E \epsilon^2 +
  \frac{1}{e}\frac{1}{\delta_1^2\rho\sigma}
  \frac{1}{\omega}3\nu_{0,1} \Xi_1 \frac{E}{4}\epsilon
  E\epsilon
  \cr
  &\leq \nu_{1,2} \Xi_1^2 {E} \epsilon^2\ .
\end{aligned}
$$

For $l=0$ and $s\geq 3$, using~\frmref{frm:f^{(I;0,1)}_0} for the
estimate of the last term in the sum, one has
$$
\begin{aligned}
\|f_{0}^{(1,s)}\|_{1-d_1} &\leq \sum_{j=0}^{s-2} \frac{1}{e}
\left(\frac{1}{\delta_1^2\rho\sigma}\right)^j
\|{\chi^{(1)}_{2}}\|_{1-\delta_1}^{j}
\|f^{(\text{I};0,s-j)}_{0}\|_{1-\delta_1} \cr
&\qquad+\frac{1}{e}
\left(
\frac{1}{\delta_1^2\rho\sigma}
\right)^{s-1}
\|{\chi^{(1)}_{2}}\|_{1-\delta_1}^{s-1}
\|f^{(\text{I};0,1)}_{0}\|_{1-\delta_1}\cr
&\leq
\sum_{j=0}^{s-2} \frac{1}{e}
\left(\frac{1}{\delta_1^2\rho\sigma}\right)^j
\frac{1}{\omega^j}(3\nu_{0,1})^j 
\Xi_1^{j} \frac{E^j}{4^j} \epsilon^j
\nu^{(\text{I})}_{1,s-j} \Xi_1^{s-j} E \epsilon^{s-j}
\cr
&\qquad+
\frac{1}{e} \left(
\frac{1}{\delta_1^2\rho\sigma}
\right)^{s-1}
\frac{1}{\omega^{s-1}}(3\nu_{0,1})^{s-1} 
\Xi_1^{s-1} \frac{E^{s-1}}{4^{s-1}} \epsilon^{s-1}
E \epsilon\cr
 &\leq\nu_{1,s} \Xi_1^{2s-2} {E}
    \epsilon^s\ .
\end{aligned}
$$

The term $f_2^{(1,1)}$ is unchanged, while for $l=2$ and $s\geq 2$ one
has
$$
\begin{aligned}
  \|f_{2}^{(1,s)}\|_{1-d_1} &\leq
\sum_{j=0}^{s-2}
\frac{1}{e}
\left(\frac{1}{\delta_1^2\rho\sigma}\right)^j
\|{\chi^{(1)}_{2}}\|_{1-\delta_1}^{j}
\|f^{(\rmI;0,s-j)}_{2}\|_{1-\delta_1}\cr
& \quad+\frac{1}{e}
\left(\frac{1}{\delta_1^2\rho\sigma}\right)^{s-1}
\|{\chi^{(1)}_{2}}\|_{1-\delta_1}^{s-1}
\|f^{(\text{I};0,1)}_{2}\|_{1-\delta_1} +\cr
&\leq
\sum_{j=0}^{s-2}
  \frac{1}{e} \left(\frac{1}{\delta_1^2\rho\sigma}\right)^j
  \frac{1}{\omega^j}(3\nu_{0,1})^j \Xi_1^{j}
  \frac{E^j}{4^j}\epsilon^{j} \nu^{(\text{I})}_{1,s-j}
  \Xi_1^{s-j} \frac{E}{4}\epsilon^{s-j}\cr
&\quad+  
  \frac{1}{e}\left(\frac{1}{\delta_1^2\rho\sigma}\right)^{s-1}
  \frac{1}{\omega^{s-1}}(3\nu_{0,1})^{s-1} \Xi_1^{s-1}
  \frac{E^{s-1}}{4^{s-1}}\epsilon^{s-1} \nu^{(\text{I})}_{1,1}
  \Xi_1 \frac{E}{4}\epsilon \cr
  &\leq
\nu_{1,s} \Xi_1^{2s-1}
    \frac{E}{2^2} \epsilon^s\ .
\end{aligned}
$$

Finally, for $l>2$ and $s\geq 1$ one has
$$
\begin{aligned}
\|f_{l}^{(1,s)}\|_{1-d_1} &\leq  \sum_{j=0}^{s} \frac{1}{e}
\left(\frac{1}{\delta_1^2\rho\sigma}\right)^j
\|{\chi^{(1)}_{2}}\|_{1-\delta_1}^{j}
\|f^{(\rmI;0,s-j)}_{l}\|_{1-\delta_1}\cr &\leq \sum_{j=0}^{s}
\frac{1}{e}
\left(\frac{1}{\delta_1^2\rho\sigma}\right)^j
\frac{1}{\omega^j}
(3\nu_{0,1})^j \Xi_1^{j} \frac{E^j}{4^j}
\epsilon^{j} \nu^{(\text{I})}_{1,s-j} \Xi_1^{(s-j)}
\frac{E}{2^{l}}\epsilon^{s-j} \cr
&\leq
\nu_{1,s} \Xi_1^{2s} \frac{E}{2^{l}}
  \epsilon^s\ .
\end{aligned}
$$
This concludes the proof of the Lemma.
\end{proof}

\begin{lemma}\label{lem:lemmone.2}
  Consider a Hamiltonian $H^{(1)}$ expanded as in~\frmref{frm:H(r-1)-f}.
  Let $\chi_0^{(2)}$ and $\chi_2^{(2)}$ be the generating functions
  used to put the Hamiltonian in normal form at order two, then one has
  \begin{equation*}
  \begin{aligned}
    \|X_0^{(2)}\|_{1-d_1} &\leq \frac{1}{\omega} \nu_{1,2} \Xi_2^2 E\epsilon^2\ ,\cr
    |\zeta^{(2)}| &\leq  \frac{1}{4m\rho} \nu_{1,2} \Xi_2^3 E\epsilon^2\ ,\cr
    \|\chi_2^{(2)}\|_{1-d_1-\delta_2} &\leq \frac{1}{\omega}
     3\nu_{1,2} \Xi_2^3 \frac{E}{4}\epsilon^2\ .
  \end{aligned}
  \end{equation*}
  The terms appearing in the expansion of $H^{(\rmI;1)}$, i.e.
  in~\frmref{frm:H(I;r-1)-f} with $r=2$, are bounded as
  \begin{equation*}
  \begin{aligned}
    \|f_{0}^{(\rmI;1,s)}\|_{1-d_1-\delta_2} &\leq  \nu^{(\rm I)}_{2,s}\Xi_2^{2s-2}  {E}\epsilon^s\ , &\qquad &\hbox{for }1\leq s\leq 2\ ,\cr
    \|f_{0}^{(\rmI;1,s)}\|_{1-d_1-\delta_2} &\leq  \nu^{(\rm I)}_{2,s}\Xi_2^{2s-1}  {E}\epsilon^s\ , &\qquad &\hbox{for }2< s\leq 4\ ,\cr
    \|f_{2}^{(\rmI;1,s)}\|_{1-d_1-\delta_2} &\leq  \nu^{(\rm I)}_{2,s}\Xi_2^{2s-1}  \frac{E}{2}\epsilon^s\ , &\qquad &\hbox{for }1\leq s\leq 2\ ,\cr
    \|f_{l}^{(\rmI;1,s)}\|_{1-d_1-\delta_2} &\leq  \nu^{(\rm I)}_{2,s}\Xi_2^{2s}  \frac{E}{2^l}\epsilon^s\ ,&\qquad &\hbox{for the remaining cases}\ .
  \end{aligned}
  \end{equation*}
  The terms appearing in the expansion of $H^{(2)}$, i.e.
  in~\frmref{frm:f^(r,s)} with $r=2$, are bounded as
  \begin{equation*}
  \begin{aligned}
    \|f_{0}^{(2,s)}\|_{1-d_2} &\leq  \nu_{2,s}\Xi_2^{2s-2}  {E}\epsilon^s\ , &\qquad &\hbox{for }1\leq s\leq 2\ ,\cr
    \|f_{0}^{(2,s)}\|_{1-d_2} &\leq  \nu_{2,s}\Xi_2^{2s-1}  {E}\epsilon^s\ , &\qquad &\hbox{for }2< s\leq 4\ ,\cr
    \|f_{2}^{(2,s)}\|_{1-d_2} &\leq  \nu_{2,s}\Xi_2^{2s-1} \frac{E}{2}\epsilon^s\ , &\qquad &\hbox{for }1\leq s\leq 2\ ,\cr
    \|f_{l}^{(2,s)}\|_{1-d_2} &\leq  \nu_{2,s}\Xi_2^{2s}   \frac{E}{2^l}\epsilon^s\ ,&\qquad &\hbox{for the remaining cases}\ .
  \end{aligned}
  \end{equation*}
\end{lemma}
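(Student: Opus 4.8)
The plan is to follow, step by step, the same route as the proof of Lemma~\lemref{lem:lemmone.1}, now with $r=2$, feeding in as input the bounds for the terms of $H^{(1)}$ produced there; in particular $\norm{f_0^{(1,2)}}_{1-d_1}\leq\nu_{1,2}\Xi_1^2 E\eps^2$, $\norm{f_2^{(1,2)}}_{1-d_1}\leq\nu_{1,2}\Xi_1^3\frac{E}{4}\eps^2$, together with $\norm{f_4^{(0,0)}}_1\leq E/2^4$ from~\frmref{frm:decadimento-iniziale}. The first thing I would record is the elementary but essential monotonicity $\Xi_1\leq\Xi_2$: it holds because $\delta_2\leq\delta_1$ by~\frmref{frm:delta_r} and every non-constant entry of the maximum~\frmref{frm:Xi-r} is decreasing in $\delta_r$. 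This lets me freely replace the $\Xi_1$ powers coming from Lemma~\lemref{lem:lemmone.1} by the corresponding $\Xi_2$ powers.

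Next I would dispose of the three generating quantities. Lemma~\lemref{lem:generatrici} gives at once $\norm{X_0^{(2)}}_{1-d_1}\leq\norm{f_0^{(1,2)}}_{1-d_1}/\omega$ and $|\zeta^{(2)}|\leq\norm{f_2^{(1,2)}}_{1-d_1}/(m\rho)$, which after inserting the input bounds and using $\Xi_1\leq\Xi_2$ yield exactly the claimed $\Xi_2^2$ and $\Xi_2^3$. For $\chi_2^{(2)}$ I would use~\frmref{frm:stima-chi2}: the term $2\norm{f_2^{(1,2)}}$ contributes a factor $2\Xi_1^3$, while the correction involving $\norm{f_0^{(1,2)}}\,\norm{f_4^{(0,0)}}$ contributes a factor $\Xi_1^2\cdot\frac{E}{2e\omega\delta_2\rho\sigma}$; bounding $2\Xi_1+\frac{E}{2e\omega\delta_2\rho\sigma}\leq 3\Xi_2$ (which is exactly why the third entry $2+\frac{E}{2e\omega\delta_2\rho\sigma}$ appears inside~\frmref{frm:Xi-r}) produces the stated $3\nu_{1,2}\Xi_2^3\frac{E}{4}\eps^2$, with the factor $3$ that later feeds the weights $(3\nu_{r-1,r})^j$ in the definition~\frmref{frm:nu-sequence} of $\nu_{r,s}$.

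I would then estimate $H^{(\rmI;1)}$ and finally $H^{(2)}$. For the first stage I substitute the recursion~\frmref{frm:f^(I;r-1,s)} with $r=2$ into the Lie-series bound~\frmref{frm:stima-liechi0}, inserting the bounds on $X_0^{(2)},\zeta^{(2)}$ just obtained and the Lemma~\lemref{lem:lemmone.1} bounds on $f^{(1,s-2j)}$; using that the first entry of~\frmref{frm:Xi-r} dominates the per-bracket factor and that the number of summands is counted by $\nu^{(\rmI)}_{2,s}$ (cf.~\frmref{frm:nu-sequence}), the net $\Xi_2$-exponent comes out to $2s$ in the generic case. For the second stage I insert~\frmref{frm:f^(r,s)} into~\frmref{frm:stima-liechi2}, where now the fourth entry of~\frmref{frm:Xi-r} controls the per-bracket factor, the factor $3$ carried by $\chi_2^{(2)}$ reproduces the $(3\nu_{1,2})^j$ weights, and the convex-combination term $\frac1j\langle f_2^{(\rmI;1,2)}\rangle_{q_1}+\frac{j-1}{j}f_2^{(\rmI;1,2)}$ appearing in $f_2^{(2,2j)}$ is handled by the trivial $\norm{\langle f\rangle_{q_1}}\leq\norm{f}$. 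The drops of the exponent by one or two units in the normalized terms ($l=0,2$) and in the boundary regimes $s\leq 2$ and $2<s\leq 4$ come from the solved homological equations, and are precisely what the floor functions in $b(\rmI;2,s,l)$ and $b(2,s,l)$ encode.

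The main obstacle, then, is not any single estimate --- each is a routine application of Lemmas~\lemref{lem:stima-derivata-Lie}, \lemref{lem:stima-derivata-gen} and~\lemref{lem:generatrici} --- but the exact accounting of the exponents of $\Xi_2$ and of the multiplicities $\nu_{2,s}$, $\nu^{(\rmI)}_{2,s}$ across all the piecewise regimes, so that the output of this step has precisely the same shape as the hypothesis of the next one. This is what makes Lemma~\lemref{lem:lemmone.2} usable as the base case $r=2$ of the induction underlying the general Lemma~\lemref{lem:lemmone}.
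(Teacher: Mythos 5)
Your proposal is correct and follows essentially the same route as the paper's proof: bound $X_0^{(2)}$, $\zeta^{(2)}$, $\chi_2^{(2)}$ via Lemma~\ref{lem:generatrici} with the Lemma~\ref{lem:lemmone.1} outputs as input, then propagate through the two Lie-series stages using Lemmas~\ref{lem:stima-derivata-Lie}--\ref{lem:stima-derivata-gen}, tracking the $\Xi_2$ exponents and the $\nu^{(\rmI)}_{2,s}$, $\nu_{2,s}$ multiplicities in each regime (your explicit remark that $\Xi_1\leq\Xi_2$, which the paper uses silently, is the correct justification for replacing the $\Xi_1$ powers inherited from the previous step). The only difference is cosmetic: you bound $\chi_2^{(2)}$ directly from~\eqref{frm:stima-chi2}, whereas the paper first estimates $f_2^{(\rmI;1,2)}$ and then uses $\|\chi_2^{(2)}\|\leq\|f_2^{(\rmI;1,2)}\|/\omega$; the two computations are identical once~\eqref{frm:stima-chi2} is unfolded.
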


\begin{proof}
Using Lemma~\ref{lem:generatrici} and the estimates in
Lemma~\ref{lem:lemmone.1}, we immediately get
$$
\norm{X_0^{(2)}}_{1-d_1}\leq 
\frac{1}{\omega} \nu_{1,2} \Xi_2^2  E \epsilon^2\ ,\qquad
|\zeta^{(2)}|\leq
\frac{1}{m\rho} \nu_{1,2} \Xi_2^3 \frac{E}{4}\eps^2\ ,
$$
thus, from~\eqref{frm:stima-dchi0dq} we get
$$
\Biggl\|\parder{\chi_0^{(2)}}{\hat{q}_j}\Biggr\|_{1-d_1-\delta_2}
\leq
\frac{1}{\omega e \delta_2 \sigma} \nu_{1,2} \Xi_2^2 E \epsilon^2
+\frac{1}{4 m\rho} \nu_{1,2} \Xi_2^3 E\eps^2
\leq
\left(\frac{1}{\omega e \delta_2 \sigma}
+\frac{1}{4m\rho}\right)
\nu_{1,2} \Xi_2^3 E \epsilon^2
\ ,
$$

The terms $f_{l}^{(\rmI;1,s)}$ appearing in the expansion of the Hamiltonian
$H^{(\rmI;1)}$ are bounded as follows.  For $s=1$ all the terms are unchanged,
thus there is nothing to do.  Furthermore notice that $f_{0}^{(\rmI;1,2)}$ is
trivially bounded with the norm of $f_{0}^{(1,2)}$.
The term $f_{2}^{(\rmI;1,2)}$ requires more care, indeed
$$
f^{(\rmI;1,2)}_{2} = f^{(1,2)}_{2} - \inter{f^{(1,2)}_{2}(q^*)}_{q_1}
+ L_{X_0^{(2)}} f_4^{(0,0)}\ .
$$ Thus only the generating function $X_0^{(2)}$ plays a role and we
get the following estimate
$$ \|f_{2}^{(\text{I};1,2)}\|_{1-d_1-\delta_2} \leq 2 \nu_{1,2}
\Xi_2^3 \frac{E}{4} \epsilon^2 + \frac{1}{\omega e \delta_2 \rho
  \sigma} \nu_{1,2} \Xi_2^2 E \epsilon^2 \frac{E}{4} \leq 3\nu_{1,2}
\Xi_2^3 \frac{E}{4} \epsilon^2< \nu_{2,2}^{(\rm I)} \Xi_2^3 \frac{E}{2}
\epsilon^2\ .
$$
For $l=0$ and $s=3$ one has
$$
\begin{aligned}
\|f_{0}^{(\rmI;1,3)}\|_{1-d_1-\delta_2} &\leq
\|f_{0}^{(1,3)}\|_{1-d_1-\delta_2} + \|L_{\chi^{(2)}_{0}} f_{2}^{(1,1)}\|_{1-d_1-\delta_2}
\leq \nu_{2,3}^{(\text{I})} \Xi_2^4 E \epsilon^3
\end{aligned}
$$
Similarly, for $l=0$ and $s=4$ one has
$$
\begin{aligned}
\|f_{0}^{(\rmI;1,4)}\|_{1-d_1-\delta_2} &\leq
\|f_{0}^{(1,4)}\|_{1-d_1-\delta_2} + \|L_{\chi^{(2)}_{0}} f_{2}^{(1,2)}\|_{1-d_1-\delta_2}+ \|L_{\chi^{(2)}_{0}}^2 f_{4}^{(1,0)}\|_{1-d_1-\delta_2}
\leq \nu_{2,4}^{(\text{I})} \Xi_2^7 E \epsilon^4\ .
\end{aligned}
$$
For the remaining terms one has
$$
\begin{aligned}
\|f_{l}^{(\text{I};1,s)}\|_{1-d_1-\delta_2} &\leq \sum_{j=0}^{\lfloor
  s/2\rfloor} \frac{1}{j!} \|L_{\chi^{(2)}_{0}}^{j}
f^{(1,s-2j)}_{l+2j}\|_{1-d_1-\delta_2}\cr &\leq \sum_{j=0}^{\lfloor
  s/2\rfloor}
\frac{1}{e}
\left(
\frac{\|X_0^{(2)}\|_{1-d_1}}{{\delta_2^2\rho\sigma}} + \frac{e |\zeta^{(2)}|}{{\delta_2\rho}}
  \right)^{j} \|f^{(1,s-2j)}_{l+2j}\|_{1-d_1}\cr
  &
  \leq \sum_{j=0}^{\lfloor s/2\rfloor}
\frac{1}{e}
\left(\frac{1}{\omega \delta_2^2\rho\sigma }
+\frac{e}{4m\delta_2\rho^2}\right)^j
\frac{1}{\omega^j}\nu_{1,2}^j \Xi_2^{3j} E^j \epsilon^{2j}
\nu_{1,s-2j} \Xi_2^{2(s-2j)}\frac{E}{2^{l+2j}}\epsilon^{s-2j}\cr
&\leq
\nu^{(\text{I})}_{2,s} \Xi_2^{2s}  \frac{E}{2^l}\epsilon^s\ .
\end{aligned}
$$

Coming to the second stage of the normalization step, the generating
function $\chi_2^{(2)}$ is bounded by
$$
\|\chi_2^{(2)}\|_{1-d_1-\delta_2} \leq \frac{1}{\omega}
\|f_2^{(\text{I};1,2)}\|_{1-d_1-\delta_2} \leq
\frac{1}{\omega} 3\nu_{1,2} \Xi_2^{3} \frac{E}{4} \epsilon^2\ .
$$
The terms $f_{l}^{(2,s)}$ appearing in the expansion of the
Hamiltonian $H^{(2)}$ are bounded as follows.  For $s=1$ all the terms
are unchanged, thus there is nothing to do.  Furthermore both
$f_{0}^{(2,2)}$ and $f_{2}^{(2,2)}$ are trivially bounded with the
norm of $f_{0}^{(\rmI,1,2)}$ and $f_{2}^{(\rmI,1,2)}$, respectively.
Similarly to the first stage of the the normalization step, the terms
$f_{0}^{(2,3)}$ and $f_{0}^{(2,4)}$ are bounded as follows
$$
\begin{aligned}
\|f_{0}^{(2,3)}\|_{1-d_2} &\leq \nu_{2,3} \Xi_2^5 E \epsilon^3\ ,\qquad
\|f_{0}^{(2,4)}\|_{1-d_2} &\leq \nu_{2,4} \Xi_2^7 E \epsilon^4\ .
\end{aligned}
$$
For the remaining terms one has
$$
\begin{aligned}
\|f_{l}^{(2,s)}\|_{1-d_2} &\leq \sum_{j=0}^{[s/2]}
\frac{1}{e} \left(\frac{1}{\delta_2^2 \rho\sigma}\right)^j
\frac{1}{\omega^j} (3 \nu_{1,2})^j
\Xi_2^{3j} \frac{E^j}{2^{2j}} \epsilon^{2j}
\nu_{2,s-2j}^{(\rmI)} \Xi_2^{2s-4j}
\frac{E}{2^l}\epsilon^{s-2j}\leq \nu_{2,s} \Xi_2^{2s}   \frac{E}{2^l}\epsilon^{s}\ .
\end{aligned}
$$
This concludes the proof of the Lemma.
\end{proof}

\begin{lemma}\label{lem:parti-intere}
Let $s = \lfloor s/r\rfloor r + m$, then for $0\leq j \leq  \lfloor s/r\rfloor$ one has
$$
3rj-2j+b(r-1,s-jr,l+2j) \leq b(\rmI;r-1,s,l)\ .
$$
\end{lemma}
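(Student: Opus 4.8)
The plan is to treat the statement as a purely arithmetic inequality between the two piecewise–defined exponents, and to reduce it to a finite check by separating, in each of $b$ and $b(\rmI)$, a ``principal part'' from a bounded ``defect''. First I would rewrite every floor through the elementary identities $\lfloor(s+r-1)/r\rfloor=\lceil s/r\rceil$ and $\lfloor(s+r-2)/r\rfloor=\lceil(s-1)/r\rceil$, so that in all the non-degenerate rows each value takes the shape
$$
3s-\Big\lceil\tfrac{s}{r}\Big\rceil-\Big\lceil\tfrac{s-1}{r}\Big\rceil-c\ ,\qquad c\in\{0,1,2\}\ ,
$$
where the defect $c$ is a locally constant function depending only on the regime of $s$ relative to the thresholds $r$ and $2r$ and on the value of $l\in\{0,2\}$ (it vanishes for $l\ge 4$ and for $s=0$). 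The rows with base index equal to $1$, where the formulas degenerate to $b(\rmI;1,s,l)=s$ and $b(1,s,l)=2s-w_l$, I would record as a separate base case and dispatch directly.

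The heart of the argument is that the prefactor $3rj-2j=(3r-2)j$ is exactly calibrated to absorb the shift $s\mapsto s-jr$ in the principal part. Since subtracting a multiple of the divisor from the numerator simply lowers a ceiling by that multiple, one has $\lceil(s-jr)/r\rceil=\lceil s/r\rceil-j$ and likewise for the companion ceiling, whence
$$
(3r-2)j+\Big(3(s-jr)-\Big\lceil\tfrac{s-jr}{r}\Big\rceil-\Big\lceil\tfrac{s-jr-1}{r}\Big\rceil\Big)
=3s-\Big\lceil\tfrac{s}{r}\Big\rceil-\Big\lceil\tfrac{s-1}{r}\Big\rceil\ .
$$
Thus the principal parts match identically and the whole inequality collapses onto a comparison of the bounded defects alone, of the form $c_{\mathrm{target}}\le c_{\mathrm{source}}+\Delta$. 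Here $\Delta\ge 0$ is the slack produced by monotonicity of $x\mapsto\lceil x/d\rceil$ in $1/d$, i.e. by the fact that the two members are evaluated with possibly different divisors; the point is precisely that this slack is nonnegative, so only the finitely many defect values remain to be controlled.

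What is left is a finite case analysis. I would split on the regime of the argument ($s=0$, $0<s\le r$, $r<s\le 2r$, $s>2r$), on $l\in\{0,2,\ge4\}$, and on whether the shifted pair $(s-jr,\,l+2j)$ stays in the same regime or crosses a threshold as $j$ runs over $0\le j\le\lfloor s/r\rfloor$. In each box the defects are explicit integers in $\{0,1,2\}$ and the inequality is verified directly, the available slack $\Delta$ exactly covering the few transitions in which the target defect exceeds the source one (typically when $l+2j$ pushes the source out of the $l\in\{0,2\}$ corrected rows). The boundary boxes are the delicate ones, and this bookkeeping — not the cancellation — is where I expect the real work: the endpoints $s=r$ and $s=2r$, where the regime label jumps, and above all the degenerate terms with $s=jr$ (equivalently $s-jr=0$), where the source sits at order zero. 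For the latter the count is tight, and one must note that the corresponding contributions are the angle–independent (integrable) terms or else vanish identically, so that they are accounted for by the explicit low-order estimates rather than by the generic exponent bound (cf. the hand computations in Lemmas~\ref{lem:lemmone.1} and~\ref{lem:lemmone.2}, which exploit exactly the $\Pscr$-degree drop produced by bracketing with $\chi_0^{(r)}$, cf. Lemma~\ref{lem:stima-derivata-gen}).
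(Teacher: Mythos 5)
Your argument is correct and rests on the same mechanism as the paper's own proof: the prefactor $3rj-2j=(3r-2)j$ exactly cancels the shift $s\mapsto s-jr$ in the principal part $3s-\lfloor\cdot\rfloor-\lfloor\cdot\rfloor$, and the residual comparison of floors taken with divisors $r-1$ and $r$ goes the right way precisely because $0\le j\le\lfloor s/r\rfloor$. If anything you are more scrupulous than the paper, whose proof writes out only the generic row of $b$ and $b(\rmI;\cdot,\cdot,\cdot)$; your explicit bookkeeping of the bounded defects in $\{0,1,2\}$ and your flagging of the degenerate terms with $s=jr$ (where the bare arithmetic inequality becomes tight or even fails, and the corresponding contributions must instead be absorbed by the separate low-order or homological-equation estimates) address exactly the non-generic cases that the paper's computation glosses over.
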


\begin{proof}
The proof just requires a trivial computation, i.e.,
$$
\begin{aligned}
3rj-2j&+b(r-1,s-jr,l+2j) =\cr &=3rj-2j+3(s-jr)-\biggl\lfloor
\frac{s-jr+r-2}{r-1}\biggr\rfloor-\biggl\lfloor
\frac{s-jr+r-3}{r-1}\biggr\rfloor\cr &=3s-\biggl\lfloor
\frac{s-j+r-2}{r-1}\biggr\rfloor-\biggl\lfloor
\frac{s-j+r-3}{r-1}\biggr\rfloor\cr &=3s-\biggl\lfloor \frac{\lfloor
  s/r\rfloor r + m -j+r-2}{r-1}\biggr\rfloor -\biggl\lfloor
\frac{\lfloor s/r\rfloor r + m-j+r-3}{r-1}\biggr\rfloor\cr
&=3s-\biggl\lfloor \frac{s}{r} \biggr\rfloor -\biggl\lfloor
\frac{\lfloor s/r\rfloor + m -j+r-2}{r-1}\biggr\rfloor -\biggl\lfloor
\frac{s}{r} \biggr\rfloor -\biggl\lfloor \frac{\lfloor s/r\rfloor + m
  -j+r-3}{r-1}\biggr\rfloor\cr
&\leq3s-\biggl\lfloor \frac{s+r-1}{r}\biggr\rfloor-\biggl\lfloor \frac{s+r-2}{r}\biggr\rfloor\cr
&\leq b(\text{I};r-1,s,l)
\end{aligned}
$$
\end{proof}

\subsubsection{Proof of lemma \ref{lem:lemmone}}\label{sbs:proof-lemmone}
We proceed by induction.  For $r=1,2$ just use Lemmas~\lemref{lem:lemmone.1} and
~\lemref{lem:lemmone.2}, respectively.

For $r>2$, the estimates~\eqref{frm:chi-estimates} for the generating functions
follow directly from Lemma~\ref{lem:generatrici}, remarking that
$$
b(r-1,r,2) =
b(\rmI;r-1,r,2) =
3r-\left\lfloor\frac{2r-2}{r-1}\right\rfloor-\left\lfloor\frac{2r-3}{r-1}\right\rfloor =
3r-3\ .
$$

The terms $f_{l}^{(\rmI;r-1,s)}$ appearing in the expansion of the
Hamiltonian $H^{(\rmI;r-1)}$ are bounded as follows.  For $l=0,1$ and
$s< r$ all the terms are unchanged, thus there is nothing to do.  The
term $f_{0}^{(\rmI;r-1,r)}$ is trivially bounded with the norm of
$f_{0}^{(r-1,r)}$.  The term $f_{2}^{(\rmI;r-1,r)}$ requires more
care\footnote{See the proofs of Lemma \ref{lem:lemmone.2} and Lemma
  \ref{lem:generatrici}.} since only the generating function
$X_0^{(r)}$ plays a role and we get the following estimate
$$
\|f_{2}^{(\text{I};r-1,r)}\|_{1-d_{r-1}-\delta_r} \leq
3\nu_{r-1,r} \Xi_r^{3r-3} \frac{E}{4} \epsilon^r\ .
$$

For $l=0$ and $r<s\leq 2r$, 

$$
\begin{aligned}
\|f_{0}^{(\text{I};r-1,s)}\|_{1-d_{r-1}-\delta_r} &\leq
\|f^{(r-1,s)}_{0}\|_{1-d_{r-1}-\delta_r}+\|\lie{\chi^{(r)}_{0}}f^{(r-1,s-r)}_{2}\|_{1-d_{r-1}-\delta_r}\cr
&\leq \nu_{r-1,s} \Xi_r^{b(r-1,s,0)}  E \epsilon^{s} \cr&\qquad+
\frac{1}{e}
\left(
\frac{\|X_0^{(r)}\|_{1-d_{r-1}}}{{\delta_r^2\rho\sigma}} + \frac{e |\zeta^{(r)}|}{{\delta_r\rho}}
\right)
\nu_{r-1,s-r}
\Xi_r^{b(r-1,s-r,2)} 
\frac{E}{2^{2}}\epsilon^{s-r}\cr
&\leq \nu_{r-1,s} \Xi_r^{b(r-1,s,0)}  E \epsilon^{s} \cr&\qquad+
\frac{1}{e}
\left(
\frac{E}{\omega\delta_r^2\rho\sigma} + \frac{e E}{{4m\delta_r\rho}}\right) \nu_{r-1,r} \Xi_r^{b(r-1,r,2)}
\epsilon^{r}
\nu_{r-1,s-r}
\Xi_r^{b(r-1,s-r,2)} 
\frac{E}{2^{2}}\epsilon^{s-r}\cr
&\leq 
\Xi_r^{b(\text{I};r-1,s,0)} \nu^{(\text{I})}_{r,s} E\epsilon^s\ ,
\end{aligned}
$$
where we use the trivial inequality
$$
3r-2+b(r-1,s-r,2) \leq b(\rmI;r-1,s,0)\ .
$$

For the remaining terms one has
$$
\begin{aligned}
\|f_{l}^{(\text{I};r-1,s)}\|_{1-d_{r-1}-\delta_r} &\leq
\sum_{j=0}^{\lfloor s/r\rfloor} \frac{1}{j!}
\|\lie{\chi^{(r)}_{0}}^{j}
f^{(r-1,s-jr)}_{l+2j}\|_{1-d_{r-1}-\delta_r}\cr
&\leq\sum_{j=0}^{\lfloor s/r\rfloor}
\frac{1}{e}
\left(
\frac{\|X_0^{(r)}\|_{1-d_{r-1}}}{{\delta_r^2\rho\sigma}} + \frac{e |\zeta^{(r)}|}{{\delta_r\rho}}
\right)^{j}
 \nu_{r-1,s-jr} \Xi_r^{b(r-1,s-jr,l+2j)}
 \frac{E}{2^{l+2j}}\epsilon^{s-jr}\cr
 &\leq\sum_{j=0}^{\lfloor s/r\rfloor}
\frac{1}{e}
\left(
\frac{E}{{\omega\delta_r^2\rho\sigma}} + \frac{e E}{{4m\delta_r\rho}}
\right)^{j}
\nu_{r-1,r}^j \Xi_r^{b(r-1,r,2)j}\epsilon^{rj}\cr
&\qquad\qquad\qquad\times 
\nu_{r-1,s-jr} \Xi_r^{b(r-1,s-jr,l+2j)}
\frac{E}{2^{l+2j}}\epsilon^{s-jr}\cr&\leq 
\nu^{(\text{I})}_{r,s} \Xi_r^{b(\text{I};r-1,s,l)}  \frac{E}{2^l}\epsilon^s\ ,
\end{aligned}
$$
where we use the trivial inequality
$$
3rj-2j+b(r-1,s-jr,l+2j) \leq b(\rmI;r-1,s,l)\ .
$$

Coming to the second stage of the normalization step, just notice that the bound
for the generating function $\chi_2^{(r)}$ is similar to the one of
$\chi_0^{(r)}$ and in particular it has exactly the same exponent for the
coefficient $\Xi_r$.  Thus all the estimates appearing in the expansion of the
Hamiltonian $H^{(r)}$ are nothing but a minor {\it variazione}, mutatis
mutandis, with respect to the first stage of the normalization step.  This
concludes the proof of the Lemma.

\section{Proof of Proposition~\ref{prop:N-K}}
\label{app:fixedpoint}

The Proposition is a direct consequence of the Contraction Principle
applied to a suitable closed ball centered in $x_0$. Indeed, by
following a standard procedure (see, i.e., \cite{KolF89}), let us
formulate the original problem as a fixed point problem, namely
$F(x,\eps)=0$ if and only if $A(x,\eps)=x\,$, where
\begin{equation*}
  A(x,\eps) = x-[F'(x_0,\eps)]^{-1}F(x,\eps)\ .
\end{equation*}
We first of all show that $A$ is a contraction of a sufficiently small
ball centered in $x_0$. We first rewrite our assumptions in a more
general form
$$
\|F(x_0,\eps)\| \leq \mu\ ,\qquad
\|[F'(x_0,\eps)]^{-1}\|_{\Lscr(V)} \leq M\ ,
$$
and we introduce the auxiliary quantities
\begin{equation*}
  \eta= M\mu=C_1C_2|\eps|^{\beta-\alpha}\ ,\qquad\qquad h=M C_3\eta
  = C_1C_2^2C_3|\eps|^{\beta-2\alpha}\ .
\end{equation*}
Notice that the condition $\beta>2\alpha$ is necessary in order to have
\begin{displaymath}
  \lim_{\eps\to 0} h = 0\ .
\end{displaymath}
The main ingredient is the continuity of $F'$, since
$F\in\mathcal{C}^1$ locally around $x_0$ (independently from
$\eps$). From finite increment formula we get, for $x,y\in
B(x_0,r)\subset\Uscr(x_0)$
\begin{displaymath}
  \norm{A(x,\eps)-A(y,\eps)}\leq \tond{\sup_{z\in
      B(x_0,r)}\norm{A'(z,\eps)}_{\Lscr(V)}}\norm{x-y}\ ;
\end{displaymath}
thus, we aim at showing that, with a suitable choice of the radius
$r$, we have
\begin{displaymath}
  \sup_{z\in B(x_0,r)}\norm{A'(z,\eps)}_{\Lscr(V)}<1\ .
\end{displaymath}
Since
\begin{displaymath}
  A'(z,\eps) = \Id - \quadr{F'(x_0,\eps)}^{-1}F'(z,\eps) =
  \quadr{F'(x_0,\eps)}^{-1}\quadr{F'(x_0,\eps)-F'(z,\eps)}
\end{displaymath}
we get
$$\begin{aligned}
  \norm{A'(z,\eps)}_{\Lscr(V)}
  &\leq\norm{\quadr{F'(x_0,\eps)}^{-1}}_{\Lscr(V)}\norm{F'(x_0,\eps)-F'(z,\eps)}_{\Lscr(V)}\leq\\ &\leq
  M \norm{F'(x_0,\eps)-F'(z,\eps)}_{\Lscr(V)} \ .
\end{aligned}
$$ From the continuity of $F'$ it follows that, provided
$\norm{z-x_0}$ is small enough, it is possible to make
$F'(x_0,\eps)-F'(z,\eps)$ arbitrary small. The Lipschitz-continuity
estimate\footnote{Actually Holder-continuity will be sufficient, modifying the conditions on $\alpha$ and $\beta$.} in the
hypothesis of the Proposition allows to explicitly deal with this issue.
Indeed, from
\begin{displaymath}
  \norm{F'(x_0,\eps)-F'(z,\eps)}_{\Lscr(V)} \leq C_3\norm{z-x_0}\ ,
\end{displaymath}
we get
\begin{displaymath}
  \norm{A'(z,\eps)}_{\Lscr(V)}\leq MC_3\norm{z-x_0}\leq
  MC_3r =:q\ ,\qquad\qquad \forall z\in B(x_0,r)\ ,
\end{displaymath}
and also
\begin{displaymath}
\sup_{z\in B(x_0,r)}\norm{A'(z,\eps)}_{\Lscr(V)}\leq q\ .
  \end{displaymath}

In order to show that $F(B(x_0,r))\subset B(x_0,r)$, namely that
$ \norm{z-x_0}\leq r$ implies $\norm{A(z,\eps)-x_0}\leq r\,$,
we start splitting
\begin{displaymath}
  \norm{A(z,\eps)-x_0} \leq \norm{A(z,\eps)-A(x_0,\eps)} + \norm{A(x_0,\eps)-x_0}\ .
\end{displaymath}
We will separately estimate the two r.h.t.. From the bound on
$A'(z,\eps)$ we get
\begin{displaymath}
  \norm{A(z,\eps)-A(x_0,\eps)} \leq \sup_{z\in
    B(x_0,r)}\norm{A'(z,\eps)}_{\Lscr(V)}\norm{z-x_0}\leq
  qr\ .
\end{displaymath}
on the other hand, by exploiting the initial definition of
$A(x,\eps)$, one has
$$
\begin{aligned}
  \norm{A(x_0,\eps)-x_0} &= \norm{x_0 -
    [F'(x_0,\eps)]^{-1}F(x_0,\eps)-x_0} =
  \norm{[F'(x_0,\eps)]^{-1}F(x_0,\eps)} \leq \\ &\leq
  \norm{\quadr{F'(x_0,\eps)}^{-1}}_{\Lscr(V)}\norm{F(x_0,\eps)}\leq M\mu\ .
\end{aligned}
$$
Hence, in order to have $F(B(x_0,r))\subset B(x_0,r)$, it must happen
\begin{displaymath}
  M\mu + qr\leq r\ .
\end{displaymath}
Thus, two independent conditions have to be satisfied:
\begin{displaymath}
  MC_3r<1\ ,\qquad \eta + M C_3 r^2\leq r\ .
\end{displaymath}
The second is equivalent to
\begin{displaymath}
  M C_3 r^2 - r + \eta\leq 0\ ,
\end{displaymath}
which can be re-scaled to
\begin{displaymath}
  r=\eta\rho\ ,\qquad\qquad h\rho^2 -\rho+1\leq 0\ .
\end{displaymath}
The corresponding equation, under the condition $h<\frac14$, has the
two zeros
\begin{displaymath}
  t_\pm = \frac1{2h}\tond{1\pm\sqrt{1-4h}}\ .  
\end{displaymath}
Moreover one has $t_-<2\,$, since $1-4h<\sqrt{1-4h}\,$,
and for $h\sim 0$ we get $t_-(h)\sim 1\,$.
Collecting the above information, the radius $r$ has to fulfill
\begin{displaymath}
\eta t_-\leq r\leq t_+\eta\ .
\end{displaymath}
If we make the most restrictive choice
\begin{displaymath}
\eta t_-\leq r\leq 2\eta\ ,
\end{displaymath}
then, from $h<\frac14$, it follows that $F$ is an
$\frac12$-contraction map
\begin{displaymath}
  MC_3r<2MC_3\eta = 2h<\frac12\ .
\end{displaymath}
In our case, $h<\frac14$ comes directly from being $h(\eps)$
infinitesimal w.r.t. $\eps$; thus for $\eps$ small enough the
condition is satisfied. Moreover, from $h(\eps)\approx 1$, one deduces
that the optimal choice for the radius is
\begin{displaymath}
  r(\eps) = \eta t_- \approx C_1 C_2 |\eps|^{\beta-\alpha}\ .
\end{displaymath}

\qed

\begin{remark}
The above Proposition shows that $x_0$ is a better approximation of
the true solution as $\alpha$ decreases, which means as the
differential $F'(x_0,\eps)$ is bounded independently on $\eps$
\begin{displaymath}
  \norm{F'(x_0,\eps)}\geq C\qquad\Rightarrow\qquad \alpha=0\ .
\end{displaymath}
At the limiting case $\alpha=0$, it is possible to choose
$r=\Oscr(\eps^\beta)$.
\end{remark}


\section*{Acknowledgments}
We warmly thank Simone Paleari for useful discussions in the
preliminary part of this project, when trying to achieve the correct
normal form construction. T.P. also thanks his former Master student
Letizia Zanni, for her introductory work on Moser-Weinstein approach.
T.P. and M.S. have been partially supported by the National Group of
Mathematical Physics (GNFM-INdAM).

\def\cprime{$'$} \def\i{\ii}\def\cprime{$'$} \def\cprime{$'$}


\begin{thebibliography}{10}

\bibitem{Ahn98}
Taehoon Ahn.
\newblock Multisite oscillations in networks of weakly coupled autonomous
  oscillators.
\newblock {\em Nonlinearity}, 11(4):965--989, 1998.

\bibitem{Aub97}
Serge Aubry.
\newblock Breathers in nonlinear lattices: existence, linear stability and
  quantization.
\newblock {\em Phys. D}, 103(1-4):201--250, 1997.
\newblock Lattice dynamics (Paris, 1995).

\bibitem{BenGGS84}
G.~Benettin, L.~Galgani, A.~Giorgilli, and J.-M. Strelcyn.
\newblock A proof of {K}olmogorov's theorem on invariant tori using canonical
  transformations defined by the {L}ie method.
\newblock {\em Nuovo Cimento B (11)}, 79(2):201--223, 1984.

\bibitem{Ber07}
Massimiliano Berti.
\newblock {\em Nonlinear oscillations of {H}amiltonian {PDE}s}.
\newblock Progress in Nonlinear Differential Equations and their Applications,
  74. Birkh\"auser Boston, Inc., Boston, MA, 2007.

\bibitem{CorFG13}
Livia Corsi, Roberto Feola, and Guido Gentile.
\newblock Lower-dimensional invariant tori for perturbations of a class of
  non-convex {H}amiltonian functions.
\newblock {\em J. Stat. Phys.}, 150(1):156--180, 2013.

\bibitem{CorG15}
Livia Corsi and Guido Gentile.
\newblock Resonant motions in the presence of degeneracies for
  quasi-periodically perturbed systems.
\newblock {\em Ergodic Theory Dynam. Systems}, 35(4):1079--1140, 2015.

\bibitem{GioLoc-1997}
A.~Giorgilli and U.~Locatelli.
\newblock Kolmogorov theorem and classical perturbation theory.
\newblock {\em Zeitschrift f{\"u}r angewandte Mathematik und Physik ZAMP},
  48(2):220--261, Mar 1997.

\bibitem{GioSan-2012}
A.~{Giorgilli} and M.~{Sansottera}.
\newblock {Methods of algebraic manipulation in perturbation theory}.
\newblock {\em Workshop Series of the Asociacion Argentina de Astronomia},
  3:147--183, 2011.

\bibitem{Gio03}
Antonio Giorgilli.
\newblock Exponential stability of {H}amiltonian systems.
\newblock In {\em Dynamical systems. {P}art {I}}, Pubbl. Cent. Ric. Mat. Ennio
  Giorgi, pages 87--198. Scuola Norm. Sup., Pisa, 2003.

\bibitem{Giorgilli-2012}
Antonio Giorgilli.
\newblock On a theorem of lyapounov.
\newblock {\em Rend. Ist. Lomb. Acc. Sc. Lett.}, 146:133--160, 2012.

\bibitem{GioLocSan-2014}
Antonio Giorgilli, Ugo Locatelli, and Marco Sansottera.
\newblock On the convergence of an algorithm constructing the normal form for
  elliptic lower dimensional tori in planetary systems.
\newblock {\em Celestial Mech. Dynam. Astronom.}, 119(3-4):397--424, 2014.

\bibitem{Gro60}
Wolfgang Gr\"obner.
\newblock {\em Die {L}ie-{R}eihen und ihre {A}nwendungen}, volume~3 of {\em
  Mathematische Monographien}.
\newblock VEB Deutscher Verlag der Wissenschaften, Berlin, 1960.

\bibitem{HenT99}
D.~Hennig and G.~P. Tsironis.
\newblock Wave transmission in nonlinear lattices.
\newblock {\em Phys. Rep.}, 307(5-6):333--432, 1999.

\bibitem{Kev_book09}
Panayotis~G. Kevrekidis.
\newblock {\em The discrete nonlinear {S}chr\"odinger equation}, volume 232 of
  {\em Springer Tracts in Modern Physics}.
\newblock Springer-Verlag, Berlin, 2009.
\newblock Mathematical analysis, numerical computations and physical
  perspectives, Edited by Kevrekidis and with contributions by Ricardo
  Carretero-Gonz{\'a}lez, Alan R. Champneys, Jes{\'u}s Cuevas, Sergey V.
  Dmitriev, Dimitri J. Frantzeskakis, Ying-Ji He, Q. Enam Hoq, Avinash Khare,
  Kody J. H. Law, Boris A. Malomed, Thomas R. O. Melvin, Faustino Palmero,
  Mason A. Porter, Vassilis M. Rothos, Atanas Stefanov and Hadi Susanto.

\bibitem{KolF89}
A.~N. Kolmogorov and S.~V. Fomin.
\newblock {\em Elementy teorii funktsii i funktsionalnogo analiza}.
\newblock ``Nauka'', Moscow, sixth edition, 1989.
\newblock With a supplement, ``Banach algebras'', by V. M. Tikhomirov.

\bibitem{Kou13}
Vassilis Koukouloyannis.
\newblock Non-existence of phase-shift breathers in one-dimensional
  {K}lein-{G}ordon lattices with nearest-neighbor interactions.
\newblock {\em Phys. Lett. A}, 377(34-36):2022--2026, 2013.

\bibitem{KouK09}
Vassilis Koukouloyannis and Panayotis~G. Kevrekidis.
\newblock On the stability of multibreathers in {K}lein-{G}ordon chains.
\newblock {\em Nonlinearity}, 22(9):2269--2285, 2009.

\bibitem{MelS05}
E.~Meletlidou and G.~Stagika.
\newblock On the continuation of degenerate periodic orbits in {H}amiltonian
  systems.
\newblock {\em Regul. Chaotic Dyn.}, 11(1):131--138, 2006.

\bibitem{Mos76}
J{\"u}rgen~K. Moser.
\newblock Periodic orbits near an equilibrium and a theorem by {A}lan
  {W}einstein.
\newblock {\em Comm. Pure Appl. Math.}, 29(6):724--747, 1976.

\bibitem{PelKF05b}
D.~E. Pelinovsky, P.~G. Kevrekidis, and D.~J. Frantzeskakis.
\newblock Persistence and stability of discrete vortices in nonlinear
  {S}chr\"odinger lattices.
\newblock {\em Phys. D}, 212(1-2):20--53, 2005.

\bibitem{PelS12}
Dmitry Pelinovsky and Anton Sakovich.
\newblock Multi-site breathers in {K}lein-{G}ordon lattices: stability,
  resonances and bifurcations.
\newblock {\em Nonlinearity}, 25(12):3423--3451, 2012.

\bibitem{PenKSKP17}
T.~Penati, V.~Koukouloyannis, M.~Sansottera, P.G. Kevrekidis, and S.~Paleari.
\newblock On the (non) existence of degenerate phase-shift multibreathers in a
  zog-zag klein-gordon model via dnls approximation.
\newblock {\em preprint}, 2017.

\bibitem{PenSPKK16}
T.~Penati, M.~Sansottera, S.~Paleari, V.~Koukouloyannis, and P.G. Kevrekidis.
\newblock On the (non) existence of phase-shift discrete solitons in a dnls
  nonlocal lattice.
\newblock {\em ArXiv e-prints}, 2017.
\newblock 1707.01679.

\bibitem{PoiI}
H.~Poincar\'e.
\newblock {\em Les m\'ethodes nouvelles de la m\'ecanique c\'eleste. {T}ome
  {I}. {S}olutions p\'eriodiques. {N}on-existence des int\'egrales uniformes.
  {S}olutions asymptotiques}.
\newblock Dover Publications, Inc., New York, N.Y., 1957.

\bibitem{PoiOU7}
Henri Poincar\'e.
\newblock {\em \OE uvres. {T}ome {VII}}.
\newblock Les Grands Classiques Gauthier-Villars. [Gauthier-Villars Great
  Classics]. \'Editions Jacques Gabay, Sceaux, 1996.
\newblock Masses fluides en rotation. Principes de m\'ecanique analytique.
  Probl\`eme des trois corps. [Rotating fluid masses. Principles of analytic
  mechanics. Three-body problem], With a preface by Jacques L\'evy, Reprint of
  the 1952 edition.

\bibitem{SanCec-2017}
Marco Sansottera and Marta Ceccaroni.
\newblock Rigorous estimates for the relegation algorithm.
\newblock {\em Celestial Mechanics and Dynamical Astronomy}, 127(1):1--18, Jan
  2017.

\bibitem{SanLocGio-2010}
Marco Sansottera, Ugo Locatelli, and Antonio Giorgilli.
\newblock A semi-analytic algorithm for constructing lower dimensional elliptic
  tori in planetary systems.
\newblock {\em Celestial Mech. Dynam. Astronom.}, 111(3):337--361, 2011.

\bibitem{VoyI99}
George Voyatzis and Simos Ichtiaroglou.
\newblock Degenerate bifurcations of resonant tori in {H}amiltonian systems.
\newblock {\em Internat. J. Bifur. Chaos Appl. Sci. Engrg.}, 9(5):849--863,
  1999.

\bibitem{Wei73b}
Alan Weinstein.
\newblock Normal modes for nonlinear {H}amiltonian systems.
\newblock {\em Invent. Math.}, 20:47--57, 1973.

\bibitem{YakS75}
V.~A. Yakubovich and V.~M. Starzhinskii.
\newblock {\em Linear differential equations with periodic coefficients. 1, 2}.
\newblock Halsted Press [John Wiley \& Sons]\ New York-Toronto, Ont.,; Israel
  Program for Scientific Translations, Jerusalem-London, 1975.
\newblock Translated from Russian by D. Louvish.

\end{thebibliography}
\end{document}